\definecolor{purple}{RGB}{100, 0, 200}
\theoremstyle{plain} 
\newtheorem{thm}{Theorem}[section]
\newtheorem{prop}[thm]{Proposition}
\newtheorem{lem}[thm]{Lemma}
\theoremstyle{definition} 
\newtheorem{df}[thm]{Definition}
\theoremstyle{remark} 
\newtheorem{rmk}[thm]{Remark}
\newcommand{\R}{\mathbb{R}}
\newcommand{\p}{\partial}
\newcommand{\be}{\begin{equation}}
\newcommand{\ee}{\end{equation}}
\newcommand{\ba}{\begin{aligned}}
\newcommand{\ea}{\end{aligned}}
\renewcommand{\ng}{n_\gamma}
\newcommand{\pg}{p_\gamma}
\newcommand{\cg}{c_\gamma}
\newcommand{\Ng}{N_\gamma}
\newcommand{\Pg}{P_\gamma}
\newcommand{\Lg}{L_\gamma}
\newcommand{\Papp}{P_\text{app}}
\newcommand{\cD}{\mathcal D_\gamma}
\newcommand{\xim}{\xi^-_\gamma}
\newcommand{\txg}{\tilde \xi_\gamma}
\newcommand{\xiint}{\xi^\text{int}_\gamma}
\title{Traveling waves for the porous medium equation in the incompressible limit:\\ asymptotic behavior and nonlinear stability
}
\author{Anne-Laure Dalibard\footnote{Sorbonne Université, Université Paris-Diderot SPC, CNRS,  Laboratoire Jacques-Louis Lions, LJLL, F-75005 Paris; dalibard@ann.jussieu.fr}, Gabriela Lopez-Ruiz\footnote{Sorbonne Université, Université Paris-Diderot SPC, CNRS,  Laboratoire Jacques-Louis Lions, LJLL, F-75005 Paris; lopezruiz@ljll.math.upmc.fr} ~and Charlotte Perrin\footnote{Aix Marseille Univ, CNRS, Centrale Marseille, I2M, Marseille, France; charlotte.perrin@univ-amu.fr} }
\begin{document}

\maketitle

\begin{small}
\begin{abstract}
In this study, we analyze the behavior of monotone traveling waves of a one-dimensional porous medium equation modeling mechanical properties of living tissues.
We are interested in the asymptotics where the pressure, which governs the diffusion process and limits the creation of new cells, becomes very stiff, and the porous medium equation degenerates towards a free boundary problem of Hele-Shaw type.
This is the so-called \emph{incompressible limit}.
The solutions of the limit Hele-Shaw problem then couple ``free dynamics'' with zero pressure, and ``incompressible dynamics'' with positive pressure and constant density. 
In the first part of the work, we provide a refined description of the traveling waves for the porous medium equation in the vicinity of the transition between the free domain and the incompressible domain.
The second part of the study is devoted to the analysis of the stability of the traveling waves. 
We prove that the linearized system enjoys a spectral gap property in suitable weighted $L^2$ spaces, and we give quantitative estimates on the rate of decay of solutions.
The nonlinear terms are treated perturbatively, using an $L^\infty$ control stemming from the maximum principle.
As a consequence, we prove that traveling waves are stable under small perturbations.
\end{abstract}

	\bigskip
	\noindent{\bf Keywords:} Porous Medium equation, traveling waves, incompressible limit, mesa limit, stability,\\ Hele-Shaw equations.
	
	\medskip
	\noindent{\bf MSC:} 35C07, 35K57, 35B40, 35B35.
\end{small}

\section{Introduction}
\label{sec:intro}

This paper is devoted to the asymptotic analysis and the stability of traveling waves for the porous medium equation (PME). More precisely, let us consider the following nonlinear parabolic equation
\begin{equation}\label{eq:pde-0}
\partial_t n - \partial_x \big(n \partial_x p(n) \big) = n \Phi(p(n)),
\end{equation}
endowed with the boundary conditions
\[
\lim_{x\to \pm \infty} n(t,x)=n_\pm,
\]
where $n_\pm$ are constant stationary states of the equation.
This equation has been introduced in the literature to model tissue growth and, particularly, in the propagation of tumors (see for instance \cites{Perthame2015,kim_požár_2017,David2021}).
The left-hand side corresponds to the Porous Medium Equation (PME): the density of cells, $n$, is transported by a velocity given by the Darcy law $v=-\partial_x p$ where $p = p(n)$ denotes the mechanical pressure.
The right-hand side models the cell proliferation in the medium, proliferation which is limited by the pressure.
Hence, the function $\Phi$ is usually taken as a decreasing function of the pressure and is such that $\Phi(p_M) = 0$ for some $p_M>0$ called the \emph{homeostatic pressure}.
In this study, we shall assume for simplicity that
\begin{equation} \label{eq:P-G}
p(n) = \pg(n) = n^\gamma~ \text{with}~\gamma > 1, \qquad  \Phi(p) = 1-p. 
\end{equation}
In other words, the function $\Phi$ becomes negative above the threshold pressure $p_M = p_\gamma(n_M) = 1$, which means that cells are destroyed above the \emph{maximal packing density} $n_M=1$.
We will also pick $n_-=n_M=1$, and $n_+=0$.

\bigskip
This study aims to analyze the behavior of traveling waves (TWs) solutions of~\eqref{eq:pde-0} when the parameter $\gamma$ appearing in the equation of state~\eqref{eq:P-G} tends to $+\infty$. 
For $\Phi(p) = 0$, {\it i.e} without the reaction term in the equation, this limit $\gamma \to +\infty$ is referred as the \emph{mesa limit} and has been studied for instance by Caffarelli and Friedman~\cite{caffarelli1987}.
In this paper, the authors consider an initial datum larger than $1$ on a nontrivial set and show that this upper part exceeding $1$ collapses at $t=0^+$ to $\{n=1\}$. This phenomenon is due to the blow up of the diffusivity $np'_\gamma(n) = \gamma n^\gamma \to +\infty$ when $ n > 1$.
The singular limit $\gamma \to +\infty$ for solutions of the PME is then called the ``mesa'' limit in reference to the shape of the target density $n_\infty \in [0,1]$ which is similar to the flat-topped mountains.
In the presence of a growth source term $\Phi$, the limit $\gamma \to +\infty$ has been first tackled by Perthame {\it et al.} in~\cite{PQV_2014}. 
As in the previous case, the blow-up of the pressure as $\gamma\to +\infty$ when $n>1$ forces the limit density to lie in $[0,1]$. 
The sequence $(n_\gamma)_{\gamma>1}$ of weak solutions to~\eqref{eq:pde-0} is then shown to converge (for a suitable topology) towards a weak solution of the following Hele-Shaw system
\begin{subnumcases}{\label{eq:HS}}
\partial_t n - \partial_x(n \partial_x p) = n \Phi(p),\label{eq:HS-mass} \\
0 \leq n \leq 1, \quad (1-n)p = 0, \quad p \geq 0, \label{exclusion}\\
p \ \big(\partial^2_x p + \Phi(p) \big) = 0.\label{eq:HS-compl}
\end{subnumcases}
The transition between equation~\eqref{eq:pde-0} and system~\eqref{eq:HS} is usually called the \emph{incompressible limit} in reference to the fact that, when the solution $n$ of~\eqref{eq:HS} reaches $1$, it is blocked to this maximal value (the combination of the mass equation~\eqref{eq:HS-mass} with the complementary relation~\eqref{eq:HS-compl} yields formally $\partial_t n = 0$ in $\{n=1\}$) and the medium cannot be further compressed. 
Beyond the physical and biological relevancy of system~\eqref{eq:pde-0} seen as an approximation of~\eqref{eq:HS}, Mellet {\it et al.}~\cite{Mellet2017} have shown that the incompressible limit can provide crucial qualitative information on the solutions of the Hele-Shaw system~\eqref{eq:HS}, like the regularity of the free boundary $\partial \{n=1\}$.\\
To finish with the incompressible limit, let us mention that this type of singular limit has been studied in other frameworks: for other singular equations of state~\cite{hecht_vauchelet_2017}, in the case of coupling with the dynamics of nutrients~\cite{David2021}, in the case of more than one type of cancerous cell as seen in \cites{Bubba2020, Debiec2020, degond2020}, when the Darcy law is replaced by the Brinkman equation~\cite{Perthame2015} or the Navier-Stokes equations~\cite{Vauchelet2017}.

\bigskip
Up to our knowledge, the issue of TWs solutions to~\eqref{eq:HS} remains rare in the literature (see~\cite{perthame2014} when nutrients are considered), even when the topic was intensively studied for nonlinear reaction-diffusion equations like~\eqref{eq:pde-0}. 
Indeed, TWs as a class of special solutions have been shown to provide valuable information on general solutions of these reaction-diffusion equations (see the books~\cite{volpert1994} and~\cite{gilding2012}).
Most of the results concern the long-term behavior (convergence to TWs, asymptotic rate of propagation of disturbances) or the behavior close to interfaces of general solutions.\\
Regarding the issue of interfaces, Gilding and Kersner study in~\cite{gilding2005fisher} the existence of \emph{sharp} (or \emph{finite}) TWs whose support is bounded on one side in case of nonlinear degenerate diffusion, and deduce a result about the existence of an interface $\partial \{n=0\}$ for general solutions. 
In~\cite{galaktionov1999}, TWs are used to study the regularity of the general solutions near the free boundary $\partial \{n=0\}$, as well as for the derivation of the interface motion.
The essential tools of the analysis are then: 
the continuity of the flux across the interface and a comparison principle bracketing a general solution between two TWs.\\
Concerning the long-time behavior of solutions to reaction-diffusion scalar equations like~\eqref{eq:pde-0}, let us mention two types of results related to the nature of the wave-front.
For \emph{sharp fronts}, that is, TWs with support bounded from above (or below), Kamin and Rosenau prove in~\cite{kamin2004} that initial data decaying sufficiently fast at infinity converge (in a specific sense) towards a sharp TW. 
The techniques they employ are inspired by $L^1$-stability theory of shock waves for viscous conservation laws (see for instance~\cite{serre2002}): use of comparison principle (already mentioned above), derivation of $L^1$ conservation, and contraction principles with an exponential weight. 
It is worth pointing out that this result cannot be extended to \emph{smooth fronts}, {\it i.e.} TWs that do not vanish and remain smooth on $\R$.
Indeed the weight used in~\cite{kamin2004} is specific to the critical speed $c^*$ at which the sharp fronts travel (see Theorem~\ref{t:gilding} below) and is not suited for the smooth fronts propagating at speed $c> c^*$.
To our knowledge, the only result dealing with smooth fronts is a spectral stability result obtained recently by Leyva and Plaza in~\cite{Leyva2020}. 
In their work, the difficulties associated with the degeneracy of the diffusion term are overcome with the derivation of a kind relative entropy estimate with a well-suited exponential weight.  

\bigskip
In this paper, the study of smooth TWs of~\eqref{eq:pde-0} as $\gamma \to +\infty$ can be seen as a first step in the analysis of the free boundary $\partial\{n=1\}$ for the limit Hele-Shaw system~\eqref{eq:HS}.
Our contributions are twofold:
we first give a qualitative and quantitative description (in terms of $\gamma$) of smooth TWs of~\eqref{eq:pde-0} and show the convergence towards TWs of~\eqref{eq:HS} that are discontinuous at the interface $\partial\{n=1\}$; we also study the nonlinear asymptotic stability of the smooth TWs for small (quantified in terms of $\gamma$) general perturbations of these wave-fronts.\\
As in~\cite{galaktionov1999}, our analysis relies strongly on the control of the flux around the interface (passage to the limit as $\gamma \to +\infty$, determination of the transmission conditions across the interface on the limit system); and the comparison principle (quantitative behavior of TWs as $\gamma \to +\infty$, control of general solutions lying between two TWs).
Compared to the stability analysis of Leyva and Plaza~\cite{Leyva2020}, we have to deal with additional nonlinear contributions that we treat in a perturbative manner and control thanks to a Poincar\'e-type inequality.
This latter also allows us to get a decay rate of the perturbation as $t \to +\infty$.

\subsection*{Statement of main results}
In this paper, we focus on traveling waves solutions of~\eqref{eq:pde-0}-\eqref{eq:P-G}, that is solutions $n_\gamma$ such that $n_\gamma(t,x) = \Ng(x-ct)$ where $\Ng$ is the wave profile, $\xi =x -ct$ is the wave coordinate and c is the speed of propagation of the wave.
The profile $\Ng$ is then solution to the differential equation:
\begin{equation}\label{eq:Ng}
-c \Ng' - \gamma(\Ng^\gamma \Ng')' = \Ng(1-\Ng^\gamma).
\end{equation}
The above equation admits two equilibrium states: $N \equiv 0$ (unstable) and $N\equiv 1$ (stable), and we seek therefore wavefronts $\Ng$ connecting these two states:
\begin{equation}\label{eq:Ng-endpt}
\lim_{\xi \to -\infty} \Ng(\xi) = 1, \qquad \lim_{\xi \to +\infty} \Ng(\xi) = 0.
\end{equation}
The existence and uniqueness (up to a shift) of a monotone (decreasing) solution to~\eqref{eq:Ng}-\eqref{eq:Ng-endpt}, as well as the asymptotic behavior of $\Ng$ close to $\pm \infty$, were previously investigated by Gilding and Kersner~\cite{gilding2005fisher} for $c$ larger than a threshold velocity $c^*_\gamma > 0$ (see below Theorem~\ref{t:gilding} for a precise statement).
In the present study, we intend to analyze further the behavior of $\Ng$ and $\Pg(\xi) = (\Ng(\xi))^\gamma$, the associated pressure profile, with respect to the parameter $\gamma$.
Our first main result concerns the qualitative and quantitative behaviors as $\gamma \to +\infty$.

\begin{thm}\label{thm:prop-Ng}
Let $\gamma > 1$ sufficiently large, $c > 1$ be fixed, independent of $\gamma$, and let $\Ng$ be the solution of~\eqref{eq:Ng}-\eqref{eq:Ng-endpt} such that $\Pg(0) = \frac{1}{\gamma}$.	
Then the following properties hold true.
\begin{itemize}
	\item There exist $\xi^-_\gamma,\tilde{\xi}_\gamma$ with $\xi^-_\gamma = O\left(\frac{1}{\sqrt{\gamma}}\right) < 0 < \tilde{\xi}_\gamma = O\left(\frac{1}{\gamma}\right)$, such that the profile $(\Ng,\Pg)$ satisfies
	\begin{itemize}
		\item in the \emph{congested zone} $\xi < \xi^-_\gamma$, the density $\Ng$ converges uniformly to $1$: there exists a constant $C>0$ depending only on $c$ such that
		\begin{equation}\label{encadrement-N-congest}
		 \left(\dfrac{C}{\sqrt{\gamma}}\right)^{\frac{1}{\gamma}} \leq \Ng(\xi) \leq 1 \qquad \forall \ \xi \leq \xi^-_\gamma,
		\end{equation}
		and there exist constants $C'\geq C > 0$ independent of $\gamma$ such that
		\begin{equation}\label{encadr-P-xim}
		1-\left(1-\frac{C'}{\sqrt{\gamma}}\right)e^{(1-C\gamma^{-1/2}) \xi}
		\leq \Pg(\xi)
		\leq 1-\left(1-\frac{C}{\sqrt{\gamma}}\right)e^{ \xi} \qquad \forall \ \xi \leq \xi^-_\gamma;
		\end{equation}
		
		\item in the \emph{intermediate region} $ \xi \in [\xi^-_\gamma, \tilde{\xi}_\gamma]$, $\Ng'$ takes exponentially large values with respect to $\gamma$:
		\begin{equation}\label{eq:explo-Np}
		\|\Ng'\|_{L^\infty(\xi^-_\gamma, \tilde{\xi}_\gamma)}
		= O\left(\left(1-\frac{1}{2c}\right)^{-\gamma}\right),
		\end{equation}
		while the pressure $\Pg$ converges uniformly to $0$ as $\gamma \to +\infty$: there exists $\delta \in (0, 1-c^{-1})$, independent of $\gamma$ such that
		\begin{equation}
		   \left(1-\frac{1}{c}-\delta\right)^{\gamma} \leq \Pg(\xi) \leq \dfrac{C}{\sqrt{\gamma}}\qquad \forall \ \xi \in [\xi^-_\gamma,\tilde{\xi}_\gamma];
		\end{equation}
		
		\item in the \emph{free zone} $ \xi > \tilde{\xi}_\gamma$, the pressure $\Pg$ takes exponentially small values (wrt $\gamma$): $\Pg(\xi) \leq \left(1-\frac{1}{2c}\right)^{\gamma} $ and $\Ng$ decreases exponentially to $0$ as $\xi \to +\infty$: there exists $\delta > 0$ independent of $\gamma$, such that for $\gamma$ large enough
		\begin{equation}\label{encadr-N-free}
		\left(1-\frac{1}{c}-\delta\right) \exp\left(-\left(\frac{1}{c} + \delta\right)\xi\right)
		\leq \Ng(\xi) 
		\leq \left(1-\frac{1}{c} + \delta\right) \exp\left(- \frac{1}{2c} \xi\right)
		\quad \forall \ \xi > \tilde \xi_\gamma;
		\end{equation}
	\end{itemize}
	
	\item As $\gamma \to +\infty$, there exists $(N_{HS}, P_{HS}) \in L^\infty(\R) \times W^{1,\infty}(\R)$ such that $\Ng \to  N_{HS}$ in $L^p_\text{loc}(\R)$ and  $\Pg \rightarrow P_{HS} $ in $W^{1,p}_\text{loc}(\R)$ for any $p\in [1, \infty[$, and $(N_{HS}, P_{HS})$ is a wave-front profile of the Hele-Shaw equations~\eqref{eq:HS} such that $P_{HS} (\xi) = (1-e^\xi)\mathbf{1}_{\xi \leq 0}$, $\lim_{\xi \to 0^+}  N_{HS} = 1-\frac{1}{c}$.
\end{itemize}
\end{thm}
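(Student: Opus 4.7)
The starting point is to integrate equation~\eqref{eq:Ng} once. Using the limits~\eqref{eq:Ng-endpt} and the induced decay of $\Pg'$, integration from $-\infty$ to $\xi$ yields the first-order identity
\[
\Ng \Pg' = c(1-\Ng) - \int_{-\infty}^\xi \Ng(1-\Pg)\,d\zeta,
\]
and, by consistency with the symmetric integration from $+\infty$, the global balance $\int_\R \Ng(1-\Pg)\,d\xi = c$. This latter identity is a Rankine--Hugoniot--type condition that will ultimately fix the limit jump $1-1/c$. The plan is then to analyse the profile zone by zone via comparison with explicit sub- and super-solutions of~\eqref{eq:Ng}, to define $\xim$ and $\txg$ by suitable pressure thresholds, and finally to pass to the limit $\gamma\to\infty$ by compactness.

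In the \emph{congested zone}, $\Ng$ is close to $1$, so that, writing $q_\gamma = 1-\Pg$, equation~\eqref{eq:Ng} reduces at leading order to the linear equation $q''-q=0$, whose decaying solution at $-\infty$ is $q = A e^\xi$. Plugging barriers of the form $1-(1-\varepsilon) e^{\lambda \xi}$ with $\lambda$ slightly above and below $1$ into the full nonlinear equation, one verifies that the perturbative terms, which are $O(\gamma^{-1/2})$ by virtue of $\Ng-1 = \Pg^{1/\gamma}-1$, generate precisely the constants appearing in~\eqref{encadr-P-xim}. Defining $\xim$ as the first point where $q_\gamma$ reaches a fixed multiple of $\gamma^{-1/2}$, one inverts the exponential to get $\xim = O(\gamma^{-1/2})$, and~\eqref{encadrement-N-congest} follows by taking the $1/\gamma$-th root of the pressure bounds. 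Symmetrically, in the \emph{free zone}, $\Pg$ is exponentially small and $\Ng$ stays uniformly below $1-1/(2c)$, so the diffusive coefficient $\gamma\Ng^\gamma$ is negligible and~\eqref{eq:Ng} reduces at leading order to $-c\Ng' = \Ng$. Comparison with explicit exponentials $Ae^{-\mu\xi}$ for $\mu$ slightly above and below $1/c$ delivers~\eqref{encadr-N-free}, and $\txg$ is defined so that $\Pg(\txg)$ first drops below $(1-1/(2c))^\gamma$, which, together with the normalization $\Pg(0)=1/\gamma$, yields $\txg = O(\gamma^{-1})$.

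The main obstacle is the \emph{intermediate region}, where neither asymptotic regime is valid. The key identity is $\Ng' = \Pg'/(\gamma\Ng^{\gamma-1})$: on $[\xim,\txg]$ the density is driven down towards $1-1/c$, so the factor $\Ng^{\gamma-1}$ becomes exponentially small, and a bounded pressure drop over an $O(\gamma^{-1/2})$ length forces the blow-up~\eqref{eq:explo-Np}. I would quantify this by combining the integrated identity above with the matching values of $(\Ng,\Pg)$ at $\xim$ and $\txg$ and using the monotonicity of $\Pg$ to bracket it between the congested and free asymptotics; the sharpness of the constant $1-1/(2c)$ emerges from requiring that $\gamma\Ng^\gamma$ transitions from large to small values inside this narrow window, which governs the scale at which the diffusion switches off.

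Finally, once these pointwise bounds are in place, the convergence statement is essentially compactness-driven. Monotonicity and boundedness of $\Pg$ give a pointwise limit $P_{HS}$ via Helly's selection theorem; the uniform $W^{1,p}_{\text{loc}}$ bound on $\Pg$ away from the interface combined with the $L^\infty$ bound on $\Ng$ allows one to pass to the limit in~\eqref{eq:Ng} and obtain $P_{HS}''+(1-P_{HS})=0$ on $\{P_{HS}>0\}$, with $P_{HS}(-\infty)=1$ and $P_{HS}(0)=0$, whence $P_{HS}(\xi) = (1-e^\xi)\mathbf{1}_{\xi\leq 0}$. The right-trace $\lim_{\xi\to 0^+}N_{HS} = 1-1/c$ is then forced by the global mass balance $\int_\R \Ng(1-\Pg)\,d\xi = c$ established at the outset, which in the limit becomes the jump condition at the free boundary.
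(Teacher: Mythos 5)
Your zone decomposition (congested / intermediate / free), your use of sub- and super-solutions on $\R_-$, and your identification of the limit via $P_{HS}''+(1-P_{HS})=0$ are all aligned with the paper's strategy. The global balance $\int_\R \Ng(1-\Pg)\,d\xi=c$, which you derive from integrating the flux $J_\gamma=c\Ng+\Ng\Pg'$, is a legitimate variant of the paper's argument: the paper instead uses local continuity of $J_\gamma$ at the interface and the uniform convergence $J_\gamma\to J_{HS}$ in $\mathcal C(\R_-)$ to compute $N_{HS}(0^+)=1-1/c$. Both give the same jump condition.

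There is, however, a genuine gap in the intermediate region. You write that ``on $[\xim,\txg]$ the density is driven down towards $1-1/c$, so the factor $\Ng^{\gamma-1}$ becomes exponentially small, and a bounded pressure drop over an $O(\gamma^{-1/2})$ length forces the blow-up.'' This is circular: knowing that $\Ng$ decreases to a value near $1-1/c$ across a thin layer is precisely what must be proved, and nothing in your argument excludes, a priori, a profile whose slope stays $O(1)$ and whose transition spreads over an $O(1)$ window as $\gamma\to\infty$ (in which case $\Ng^{\gamma-1}$ would not be small where needed). The paper rules this out by a phase-portrait argument: it shows the trajectory $(\Ng,\Ng')$ must cross the curve $\Gamma_-=\{(N,Q_-(N))\}$ exactly once, at a point $\Ng^0$, and that $\|\Ng'\|_\infty=|Q_-(\Ng^0)|$ must blow up because otherwise $\Ng$ would converge in $\mathcal C(K)$ to the \emph{discontinuous} $N_{HS}$, a contradiction. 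The value $\Ng^0\to 1-1/c$ then falls out of the uniform convergence of the flux. Without this step you also lack the matching condition for the free-zone sub-solution: to compare $\Ng$ with $Ae^{-\mu\xi}$ from $\xi=\txg$ onward you need to know $\Ng(\txg)\to 1-1/c$, and this again comes from the phase portrait (the paper uses the convexity of $\Ng$ on $(\xi^0_\gamma,+\infty)$ for the super-solution, and a bootstrap on $L_\gamma=\Ng'/\Ng+c^{-1}$ for the sub-solution, both of which exploit information coming out of the phase-plane analysis).

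A second, smaller gap concerns the mode of convergence of $\Pg'$. You invoke ``the uniform $W^{1,p}_{\text{loc}}$ bound on $\Pg$''; but a uniform bound on $\Pg'$ in $L^\infty$ yields only weak-$*$ compactness for $\Pg'$, whereas the theorem asserts \emph{strong} $L^p_{\text{loc}}$ convergence of $\Pg'$. The paper obtains the latter by a separate energy argument: multiplying the pressure equation by a test function, passing to the limit using the complementarity relation $P_{HS}''+(1-P_{HS})=0$, and deducing $\int\psi(\Pg')^2\to\int\psi(P_{HS}')^2$, which upgrades weak to strong $L^2_{\text{loc}}$ convergence. That step is not optional: the strong convergence of $\Pg'$ is exactly what is used (via $J_\gamma=\Ng(c+\Pg')$) to extract pointwise a.e.\ convergence of $\Ng$ on compact sets where $c+\Pg'$ is bounded away from zero, and thus to get $\Ng\to N_{HS}$ in $L^p_{\text{loc}}$.
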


\medskip
\begin{rmk}
Concerning the convergence of $(\Ng, \Pg)$ towards $(N_{HS}, P_{HS})$, a key ingredient of our proof is the uniform control of the flux $J_\gamma = c\Ng + \Ng \Pg'$ which is such $J_\gamma' = -\Ng(1-\Pg) \in [-1,0]$.
The control of $J_\gamma$ implies in particular the control of $\Pg'$ and thus yields the uniform convergence of $(\Pg)_\gamma$.
It is important to note that this uniform convergence of $(\Pg)$ is uncorrelated to the convergence of $(\Ng)_\gamma$. 
Indeed, we have $\Pg' = \gamma \Ng^{\gamma-1} \Ng'$ but the pre-factor $\gamma \Ng^{\gamma-1}$ which tends to $0$ on a half-space, prevents us to get a uniform bound on $\Ng'$. 
Actually this derivative blows up as it can be observed on~\eqref{eq:explo-Np}.
The uniform convergence of the flux $J_\gamma$ is also crucial to determine the value of $N_{HS}$ on the right side of the interface $\xi=0$. 
Since then $\displaystyle J_{HS}(0)=c +\lim_{\xi \to 0^-} P_{HS}'(\xi) = c-1$, we deduce that $\lim_{\xi \to 0^+}  N_{HS} = c^{-1} J_{HS}(0) = 1-\frac{1}{c}$.
\end{rmk}

\medskip

\begin{rmk}
A legitimate question is the possible extension of the previous result to more general pressure laws (as for instance the singular potentials considered in~\cite{hecht_vauchelet_2017} or~\cite{dalibard2020}) and reaction terms $\Phi$. 
Our analysis actually starts with the results obtained by Gilding and Kersner~\cite{gilding2005fisher}. 
In particular in~\cite{gilding2005fisher}, the determination of the critical speed $c^* = c^*_\gamma$ is specific to the pressure law $p_\gamma(n) = n^\gamma$. 
To our knowledge, the explicit characterization of $c^*$ has not been tackled in the literature, more precisely we would need an upper bound on $c^*$ independent of the parameter characterizing the incompressible limit.
The extension of~\cite{gilding2005fisher} to the case of more general pressures and reaction terms is therefore out of the scope of the present paper but there is a reasonable hope for a generalization of the previous theorem once the existence of a profile $\Ng$ for a fixed speed $c > c^*$ (independent of parameter $\gamma$) is ensured.

We believe that several steps of our strategy could be extended to other pressure laws (analysis of the phase portrait of the traveling wave and consequences, design of appropriate weights for the coercivity of the linearized operator, etc.)
However, in several instances some quantitative arguments rely heavily on  fine properties of $\Ng$ (e.g. the description of the transition zone).
It is unavoidable that such properties will depend on the exact nature of the pressure law, and that a case by case analysis needs to be performed.

\end{rmk}

\bigskip

Our second result is dedicated to the analysis of stability of the wavefront $\Ng$ in weighted Sobolev spaces. To that end, we introduce the weight
\[
W(\xi):=\Ng(\xi)^\gamma \exp\left(\int_{\xi_\gamma^-}^\xi \frac{c}{\gamma  \Ng^\gamma}\right).
\]
Note that $W$ has a double exponential growth as $\xi\to+\infty $, and a (slow) exponential decay as $\xi \to-\infty$. Therefore, $W$ will provide a very good control of the difference $\ng(t,x)-\Ng(x-ct)$ in the free zone $x-ct>0$.

Our result is the following:


\begin{thm}\label{thm:stab-Ng}
There exists constants $\eta_1, \eta_2\in]0,1[$, depending only on $c>1$, such that the following result holds.

Let $\gamma > 1$ be fixed, sufficiently large.
We make the following assumptions on the initial data $n^0_\gamma$:
\begin{enumerate}[label=(H\arabic*)]
\item  $n^0_\gamma$ lies between two shifts of $\Ng$, {\it i.e.} there exists $h >0$ such that $n^0_\gamma(x) \in [\Ng(x+h),\Ng(x-h)]$ for all $x\in \R$;

\item The difference $n^0_\gamma-\Ng$ is sufficiently decaying, namely
\[
\int_\R \left({n^0_\gamma(x)- \Ng(x)}\right)^2 W(x)  dx  <\infty.
\]
\end{enumerate}
Let $n_\gamma$ be the solution of~\eqref{eq:pde-0} associated with $n^0_\gamma$.

Then there exists a constant $c_\gamma>0$, $c_\gamma=O(\eta_1^\gamma)$, such that if $|h|\leq \eta_2^\gamma$, the following inequality holds:
\[
    \int_\R (n_\gamma(t,x) - \Ng(x-ct))^2 W(x-ct)
\ dx \\
\leq e^{-c_\gamma t} \int_\R \left({n^0_\gamma(x)- \Ng(x)}\right)^2 W(x) dx \qquad \forall t \geq 0.
\]

Moreover, setting $u_\gamma(t,x) := (n_\gamma(t,x) - \Ng(x-ct))/\Ng'(x-ct)$, we have the additional dissipation of energy:
\[
\gamma \int_0^\infty \int_{\R}   (\p_x u_\gamma(t,x))^2
(\Ng^{\gamma} (\Ng')^2 W)(x-ct) 
\ dx \:dt \leq  \int_\R \left({n^0_\gamma(x)- \Ng(x)}\right)^2 W(x) dx.
\]

\end{thm}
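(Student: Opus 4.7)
The plan is to pass to traveling-wave coordinates $\xi = x-ct$ and set $m(t,\xi) = n_\gamma(t,\xi+ct) - \Ng(\xi)$. Using~\eqref{eq:pde-0}-\eqref{eq:P-G} and the profile equation~\eqref{eq:Ng}, the perturbation $m$ satisfies a quasilinear equation of the form
\begin{equation*}
\p_t m = \mathcal{L}_\gamma m + \mathcal{Q}_\gamma[m],
\end{equation*}
where $\mathcal{L}_\gamma$ is the linearization of the PME around $\Ng$ in the moving frame and $\mathcal{Q}_\gamma[m]$ gathers all contributions that are at least quadratic in $m$. The change of unknown $u = m/\Ng'$ is natural: conjugating $\mathcal{L}_\gamma$ by $\Ng'$ renders the leading order diffusion symmetric, which is what makes the dissipation in the theorem appear as an integral of $(\p_x u)^2$.

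The first step is to derive the weighted energy identity for the full nonlinear evolution. Multiplying the equation for $m$ by $m\, W(\xi)$ and integrating by parts should yield, after careful algebra, an identity of the form
\begin{equation*}
\frac{1}{2}\frac{d}{dt}\int_\R m^2 W\,d\xi + \gamma \int_\R \Ng^\gamma (\Ng')^2 (\p_\xi u)^2\, W\,d\xi = \int_\R \mathcal{N}_\gamma[m]\, W\,d\xi,
\end{equation*}
where $\mathcal{N}_\gamma[m]$ gathers the reaction contribution and the nonlinear remainder. The weight $W$ is tailored to produce this structure: the factor $\Ng^\gamma$ matches the degeneracy of the diffusion, while the factor $\exp\bigl(\int_{\xim}^\xi c/(\gamma \Ng^\gamma)\bigr)$ is exactly what is needed so that the boundary terms from $-c\p_\xi m$ combine with those from the diffusion into a sign-definite dissipation. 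The slow exponential decay of $W$ on the congested side and its double-exponential growth on the free side reflect the asymptotic regimes described in~\cref{thm:prop-Ng}.

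The second step is a weighted Poincaré-type inequality
\begin{equation*}
\int_\R m^2\, W\,d\xi \leq C_\gamma\, \gamma \int_\R \Ng^\gamma (\Ng')^2 (\p_\xi u)^2 W\,d\xi,
\end{equation*}
with $C_\gamma = O(\eta_1^{-\gamma})$, which combined with the energy identity gives exponential decay at rate $c_\gamma = O(\eta_1^\gamma)$ for the linear dynamics, and by time integration the dissipation estimate claimed in the theorem. To treat the nonlinear term $\mathcal{N}_\gamma[m]$, hypothesis (H1) is crucial: by the comparison principle for~\eqref{eq:pde-0}, the bracketing $\Ng(\cdot+h)\leq n^0_\gamma \leq \Ng(\cdot-h)$ propagates in time, hence in moving coordinates $|m(t,\xi)| \leq \Ng(\xi-h) - \Ng(\xi+h) \leq 2h\,\|\Ng'\|_{L^\infty}$. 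Combining this with~\eqref{eq:explo-Np}, choosing $\eta_2 < 1-1/(2c)$ ensures that $h\leq \eta_2^\gamma$ translates into an $L^\infty$ bound on $m$ that dominates the exponential factors in $\mathcal{Q}_\gamma[m]$, and the resulting nonlinear contribution can be absorbed into the dissipation.

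The hardest part is the interplay between the three zones identified in~\cref{thm:prop-Ng}. The weight $W$ must simultaneously (i) cancel the unbounded drift produced by integrating $-c\p_\xi m$ against $mW$, (ii) give coercivity in the free zone despite the exponential decay of $\Ng$, and (iii) absorb the exponentially large values of $\Ng'$ in the intermediate region without spoiling the right-hand side of the energy identity. Establishing the weighted Poincaré inequality with the explicit $\gamma$-dependent constant, and verifying that the nonlinear remainder is subordinate to the dissipation under the smallness $h\leq \eta_2^\gamma$, will require the sharp two-sided bounds~\eqref{encadrement-N-congest}-\eqref{encadr-N-free} as essential quantitative input, with $\eta_1,\eta_2$ ultimately dictated by the exponent $(1-1/(2c))$ governing the transition zone.
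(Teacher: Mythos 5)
Your framework follows the paper's general strategy -- pass to $u = (\ng - \Ng(x-ct))/\Ng'(x-ct)$ to symmetrize the linearized diffusion, do a weighted $L^2$ energy estimate, use a weighted Poincaré inequality to produce exponential decay, and control the nonlinearity via a comparison-principle $L^\infty$ bound -- but two of the load-bearing steps in your sketch are not actually available.

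The central gap is the Poincaré inequality. You claim $\int m^2 W \leq C_\gamma\,\gamma\int \Ng^\gamma(\Ng')^2(\p_\xi u)^2 W$, which is equivalent to the spectral gap $\int u^2 \bar w_0 \leq C_\gamma \int (\p_\xi u)^2\bar a\bar w_0$ for the natural weight $\bar w_0 = K\Ng^\gamma(\Ng')^2\exp\bigl(\int_{\xim}^\xi c/(\gamma \Ng^\gamma)\bigr)$. The paper explicitly states, before Proposition \ref{prop:poincare}, that such an inequality is not known to hold; what is actually proved there is the weaker form $c_\gamma\int v^2\bar w_0 \leq \int(\p_\xi v)^2\bar a\bar w_0 + \int v^2 e^{\sqrt\gamma \xi}$, with an unavoidable extra $L^2$ term coming from the transition zone $(\xim,\txg)$. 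To absorb this extra term into the dissipation, the paper modulates the weight, setting $\bar w=\bar w_0\bar\phi$ with $\bar\phi$ a bounded decreasing function satisfying $\bar\phi' = -\delta_\gamma\gamma^{-1/2}e^{\sqrt\gamma\xi}(\bar a\bar w_0)^{-1}$; the corresponding energy identity (Lemma \ref{lem:mod-weight}) then dissipates the extra quantity $\delta_\gamma\int u^2 e^{\sqrt\gamma(x-ct)}$. Without this modulation the decay argument does not close, so this is a genuine missing idea, not merely a computation left to the reader.

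The second gap is in the $L^\infty$ control. From the comparison principle you get $|m(t,\xi)| \leq \Ng(\xi-h)-\Ng(\xi+h) \leq 2h\|\Ng'\|_\infty$, but this bounds $m$, not $u = m/\Ng'$ or $m/\Ng$, and the ratio $\|\Ng'\|_\infty/|\Ng'(\xi)|$ is itself exponentially large away from the transition zone, so this bound on $m$ cannot be converted into smallness of $u$. What the nonlinear estimates (Lemmas \ref{lem:NL1} and \ref{lem:NL2}) actually require is $\|u\|_\infty + \|(\ng-\Ng)/\Ng\|_\infty \leq \eta_2^\gamma$, which rests on the pointwise difference-quotient bounds $|\Ng(x+h)-\Ng(x)| \leq C^\gamma |h|\,|\Ng'(x)|$ and $|\Ng(x+h)-\Ng(x)| \leq C^\gamma |h|\,\Ng(x)$ of Lemma \ref{lem:quantitative-bounds}. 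Those bounds are proved using the phase-portrait analysis and the lower bounds on $|\Pg'|$ in the transition zone, and they are strictly stronger than the two-sided envelopes \eqref{encadrement-N-congest}--\eqref{encadr-N-free} that you cite as the sole quantitative input.
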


Let us give a short sketch of proof of the above result. An important feature of equation \eqref{eq:pde-0} lies in the fact that its linearization around $\Ng(x-ct)$ is spectrally stable in suitable weighted Sobolev spaces. 
This property has been identified recently by Leyva and Plaza \cite{Leyva2020}, using Sobolev spaces with an exponential weight. 
Here, we work with different weights, which we believe follow more closely the structure of the equation, see Lemma \ref{prop:lin} and subsection \ref{ssec:linearized}, and which give a better control in the congested zone.
One crucial point of our analysis lies in the derivation of a new weighted Poincaré inequality associated with this weight, see Proposition \ref{prop:poincare}.
This allows us to prove a spectral gap property, leading to the exponential decay announced in the above Theorem.
Once the dissipation properties of the linearized equation have been identified and quantified, we perform the nonlinear estimates by treating the quadratic terms as perturbations. 
In this regard, assumption (H1) allows us to have a uniform $L^\infty$ control on $\ng(t,x)-\Ng(x-ct)$, thanks to the parabolic nature of the equation. 

Note that the rate of decay $c_\gamma$ of the energy is exponentially small. 
This is linked to the exponential blow-up of $\Ng'$ in the transition zone, see Theorem \ref{thm:prop-Ng}.
This blow-up also imposes a strong limitation on the admissible size of the  perturbation in $L^\infty$, and thereby on the size of $h$.
It is not clear whether this assumption could be substantially lowered, taking for instance initial perturbations that would be algebraically - but not exponentially - small. Indeed, it is possible that the strong variations of $\Ng$ in the transition zone destabilize the flow.

\vspace{1cm}
Our study is organized as follows. 
In Section~\ref{sec:TW-descript}, we describe traveling fronts for both systems~\eqref{eq:pde-0} and~\eqref{eq:HS} and give a refined behavior of the profile $\Ng$ in the transition zone between the congested region and the free region.
Next, we prove in Section~\ref{sec:stability} the asymptotic stability of the profile $\Ng$ ($\gamma$ being fixed) for some $L^2$-weighted norm.
Finally, we have postponed in Section~\ref{sec:appendix} the proofs of some technical lemmas used in Section~\ref{sec:stability}.

\section{Traveling waves for the Hele-Shaw system and the porous media equation}{\label{sec:TW-descript}}
This section is devoted to studying the existence and properties of traveling fronts of both systems: Hele-Shaw and the mechanical model of tumor growth with ``stiff pressure law'' depending on the parameter $\gamma$. For the latter, an asymptotic expansion of this type of solution will be computed.

\subsection{TW for the limit Hele-Shaw system}

We look for traveling wave-type solutions of the form $(n,p)=(N,P)(x-ct)$, where $c>0$ is a constant representing the traveling wave speed and  $N, P$ are real nonnegative functions. We may assume that $c> 0$, since for $c= 0$ we find again the stationary solutions, and the case $c<0$ can be reduced to $c>0$ by reflection. 

\begin{lem}\label{tw_HS}
Let $c>1$ be arbitrary, and let $\xi$ denote the traveling wave variable $\xi=x-ct$.

\begin{enumerate}
    \item Define the profile $(N_{HS}, P_{HS})\in L^\infty(\R)\times W^{1,\infty}(\R)$ by
    \be\label{def:barN-P}
    P_{HS}(\xi)=\left\{
    \begin{array}{ll}
    0&\text{ if } \xi>0,\\
    1-e^\xi&\text{ if } \xi<0,\end{array}
    \right.
    \qquad
     N_{HS}(\xi)=\left\{
    \begin{array}{ll}
   \displaystyle\left( 1-\frac1c\right) e^{-\frac{\xi}{c}} &\text{ if } \xi>0,\\
    1&\text{ if } \xi<0.\end{array}\right.
    \ee
    Then $(N_{HS}, P_{HS})(x-ct)$ is a traveling wave moving at speed $c$ solution of the Hele-Shaw system
    \begin{align}
     c N' + (N P')'+ N \Phi(P)&=0, \label{Heleshaw1}\\
     0 \leq N \leq 1, \quad (1-N)P &= 0, \quad P \geq 0,\label{Heleshaw2}\\
     P(P'' + \Phi(P)) = 0. \label{Heleshaw3}
    \end{align}
    
    \item Let $(N,P)\in L^\infty(\R) \times W^{1,\infty}(\R)$ be a traveling wave profile moving at speed $c$ of the Hele-Shaw system \eqref{eq:HS}. Then there exists $\xi_0\in \R$ such that $(N,P)=(N_{HS}, P_{HS}) (\cdot - \xi_0)$.

\end{enumerate}

\end{lem}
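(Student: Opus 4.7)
My plan for Part 1 is to verify the claimed profile satisfies each equation directly. The constraints \eqref{Heleshaw2} and the complementary relation \eqref{Heleshaw3} will follow by inspection of \eqref{def:barN-P}: $N_{HS}\in[0,1]$ and $P_{HS}\geq 0$ pointwise, $(1-N_{HS})P_{HS}=0$ because the two factors vanish on complementary half-lines, and on $\{\xi<0\}$ a direct computation gives $P_{HS}''+\Phi(P_{HS})=-e^\xi+e^\xi=0$ (while $P_{HS}\equiv 0$ on $\{\xi>0\}$). The mass equation \eqref{Heleshaw1} will hold classically on each open half-line; the only nontrivial point is to check that no singular contribution appears across the interface $\xi=0$. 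I would recast \eqref{Heleshaw1} in conservation form $\partial_\xi(cN+NP')=-N\Phi(P)$ and note that, with a bounded right-hand side, it suffices to show that the flux $cN+NP'$ is continuous at $0$. Both the left and right limits equal $c-1$, giving the Rankine--Hugoniot matching.

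\textbf{Uniqueness.} For Part 2, I would argue that any nontrivial traveling-wave profile connecting $N=1$ at $-\infty$ to $N=0$ at $+\infty$ (the implicit wave-front condition) must coincide with $(N_{HS},P_{HS})$ up to translation. The exclusion relation $(1-N)P=0$ decomposes $\R$ into the congested set $C=\{N=1\}$ and the free set $F=\{P=0\}$. On the interior of $F$ the mass equation reduces to $cN'+N=0$, giving $N(\xi)=A e^{-\xi/c}$; on the interior of $C$ it reduces to $P''+(1-P)=0$, giving $P=1+\alpha e^\xi+\beta e^{-\xi}$. The boundary behavior then pins down the structure: $C$ must be the left half-line $(-\infty,\xi_0]$ and $F$ the right half-line $[\xi_0,+\infty)$ for some $\xi_0\in\R$. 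On the left, boundedness as $\xi\to-\infty$ forces $\beta=0$, and continuity of $P\in W^{1,\infty}$ together with $P\equiv 0$ on $F$ gives $P(\xi_0)=0$, hence $\alpha=-e^{-\xi_0}$, i.e. $P(\xi)=1-e^{\xi-\xi_0}$. On the right, the flux continuity established in Part 1 applied at $\xi_0$ gives $c-1=cA e^{-\xi_0/c}$, which fixes $A=(1-1/c)e^{\xi_0/c}$ and yields $(N,P)=(N_{HS},P_{HS})(\cdot-\xi_0)$.

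\textbf{Main obstacle.} The principal difficulty I anticipate is establishing the topological structure of the zones $C$ and $F$: that each is connected and that they meet at a single interface with $C$ to the left of $F$. Without this step one cannot rule out a priori oscillations between the two regimes, nor the trivial stationary solutions such as $(N,P)\equiv(1,1)$ which satisfy \eqref{Heleshaw1}--\eqref{Heleshaw3} but do not match the wave-front boundary conditions. Consequently this step relies essentially on the asymptotic limits $N(\pm\infty)\in\{0,1\}$ that are implicit in the notion of wave-front profile. Once this structural step is settled, the remainder of the uniqueness argument reduces to the integration of a linear ODE on each half-line together with a single jump relation, which is routine.
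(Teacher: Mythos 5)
Your Part 1 is fine and matches the paper's (essentially one-line) verification. Your uniqueness scaffold is also the right one, and you are correct to flag the topological-structure step as the crux: without knowing that the congested and free regions are single half-lines meeting at one point, the ODE integrations on each zone prove nothing. But you have left exactly that step unproved, so as written the argument has a genuine gap rather than a complete proof.

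Here is how the paper closes it, and why it works. Since $P\in W^{1,\infty}$, the set $\{P>0\}$ is open, hence a countable disjoint union of intervals $(a_j,b_j)$. On each such interval the exclusion relation forces $N=1$, so $-P''=1-P$ there, giving $P=1+C_j^+e^\xi+C_j^-e^{-\xi}$. The key additional input is the flux $J=cN+NP'$: the mass equation says $J'=-N\Phi(P)\in[-1,0]$, so $J$ is Lipschitz and non-increasing, and the boundary values give $J(-\infty)=c$, $J(+\infty)=0$, hence $0\le J\le c$. On $(a_j,b_j)$ one has $J=c+P'$, so $P'\le 0$ and $P''=J'\le 0$: $P$ is non-increasing and concave on every component. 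This immediately rules out bounded components $(a_j,b_j)\subset\R$, because then $P(a_j)=P(b_j)=0$ and monotonicity would force $P\equiv 0$ on the interval, contradicting $P>0$ there; and it rules out $b_j=+\infty$ because $N\equiv 1$ on $(a_j,+\infty)$ contradicts $N(+\infty)=0$. The only surviving possibility is a single component $(-\infty,\xi_0)$, from which your formulas for $P$ and then $N$ (via flux continuity at $\xi_0$) follow exactly as you wrote them. So the missing ingredient you were looking for is the monotonicity-and-concavity of $P$ on $\{P>0\}$, which comes not from the complementarity relation alone but from the sign and monotonicity of the flux $J$.
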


\begin{rmk}
\begin{itemize}
    \item The Lipschitz regularity assumption on $P$ ensures that the term $P'N$ is well-defined, as a product of two $L^\infty$ functions. 
    
    \item An important feature of the analysis is the continuity of the flux $( c +P') N$ on $\R$ (and in particular at the transition point $\xi_0$). This property will determine the value of $N(\xi_0^+).$

\end{itemize}

\label{rmk:flux}
\end{rmk}
\begin{proof}
It is easily checked that $(N_{HS}, P_{HS})$ is a solution of~\eqref{Heleshaw1}-\eqref{Heleshaw3}.
Hence the difficulty is to prove that all solutions are equal to $(N_{HS}, P_{HS})$ (up to a translation). As emphasized in Remark \ref{rmk:flux}, the flux $J= cN+ NP'$ satisfies $J'=-N \Phi(P)\in [-1,0]$. Hence $J$ is Lipschitz continuous and decreasing. Using the values of $N,P$ at $\pm\infty$, we find that $J(-\infty)=c$, $J(+\infty)=0$, and therefore $0\leq J\leq c$ a.e.

Since $P$ is Lipschitz continuous, the set $\{P>0\}$ is a countable union of disjoint open intervals, say $\cup_{j\in \mathcal J} (a_j, b_j)$. On any such interval $(a_j, b_j)$, we have $N=1$ and 
\begin{equation*}\label{HS}
    -P''=1-P , \quad \forall\xi \in (a_j, b_j).
\end{equation*}
Hence there exist $C_j^\pm$ such that
\[
P(\xi)= 1 + C_j^+ e^\xi + C_j^- e^{-\xi}\quad \forall \xi \in (a_j, b_j) .
\]
Note that the case $b_j=+\infty$ is excluded, since $N(+\infty)=0$, and that $C_j^-=0$ if $a_j=-\infty$. Furthermore, on any interval $(a_j, b_j)$, we have $J=c + P'\in [0,c]$, and $J'=P''\leq 0$. Hence $P$ is non-increasing and concave on $(a_j,b_j)$. If $a_j, b_j\in \R$, we have additionally $P(a_j)=P(b_j)=0$, since $a_j, b_j \in \p \{P>0\}$. This entails  that $P(\xi)=0$ for all $\xi\in (a_j, b_j)$, which is absurd.
Hence $\mathcal J$ is a singleton and there exists $\xi_0\in \R$ such that
$\{P>0\}=(-\infty, \xi_0)$. Furthermore, since $P(\xi_0)=0$, we find that
\begin{equation}\label{p_nonnul}
P(\xi)=1-e^{\xi-\xi_0}\quad \forall \xi<\xi_0.
\ee

Let us now consider the free-phase, i.e. 
the set $\{P=0\}= [\xi_0, +\infty[$. In (the interior of) this interval, the equation becomes
\begin{equation*}
    {c}N'=-N,\quad\forall\xi>\xi_0.
\end{equation*}
The solution of the above linear equation is of the form
\begin{equation*}
    N(\xi)=C\exp\left(-\frac{\xi-\xi_0}{\bar{c}}\right).
\end{equation*}
We infer that in $(\xi_0, +\infty)$, $J= c C \exp\left(-\frac{\xi-\xi_0}{\bar{c}}\right)$.
By continuity of $J$ at $\xi=\xi_0$, we obtain
\[
c-1= J(\xi_0^-)=J(\xi_0^+)= cC.
\]
Thus $C=(c-1)/c$, and we find that $(N,P)=(N_{HS}, P_{HS})(\cdot -\xi_0)$.

\end{proof}

\subsection{Qualitative properties of travelling waves for the porous medium equation \eqref{eq:pde-0}}
\label{limgammainf}

Let us now consider traveling waves for the porous medium equation \eqref{eq:pde-0}. We are interested in the behavior of such profiles in the limit $\gamma\to +\infty$, with a fixed velocity $c>0$. 
In the following two subsections, we aim to derive qualitative and quantitative information on the profiles when $\gamma\gg 1$.


The existence of a profile $\Ng$ solution to~\eqref{eq:Ng}-\eqref{eq:Ng-endpt} is ensured by a former study of Gilding and Kersner \cite{gilding2005fisher}.
More precisely, as a particular case of~\cite{gilding2005fisher}, one has the following result.

\begin{thm}[Gilding \& Kersner \cite{gilding2005fisher}]\label{t:gilding} 
Let  $\cg^*= \sqrt{\frac{\gamma}{\gamma+1}}$.
\begin{enumerate}
\item System~\eqref{eq:Ng}-\eqref{eq:Ng-endpt} has a unique solution $N_\gamma$ (up to a shift) for every $c\geq \cg^*$ and no solution for $c< \cg^*$.

    \item When $c= \cg^*$, $N_\gamma$ is a sharp front, i.e. the support of $N_\gamma$ is bounded above, and, modulo translation,
    \begin{equation*}
        N_\gamma(\xi)=\left\{\begin{array}{rcc}
           \left( 1-\exp\left(c\xi\right)\right)^{1/\gamma} &\text{for}&\xi<0,\\
             0& \text{for}&\xi\geq 0.
        \end{array}\right.
    \end{equation*}
    \item When $c>\cg^*$, $N_\gamma$ is positive, strictly monotonic and satisfies
    \begin{equation}\label{cvg-Ng-minf}
    (\ln(1 - N_\gamma))'(\xi)\rightarrow 
    \sqrt{1+\dfrac{c^2}{4\gamma^2}} - \dfrac{c}{2\gamma}= \frac{1}{ \sqrt{1+\dfrac{c^2}{4\gamma^2}} + \dfrac{c}{2\gamma}} \quad\text{as}\quad\xi\rightarrow-\infty,
    \end{equation}
    and
    \begin{equation*}
        (\ln(N_\gamma))'(\xi)\rightarrow-\frac{1}{c},\quad\text{as}\quad\xi\rightarrow+\infty.
    \end{equation*}
\end{enumerate}
\end{thm}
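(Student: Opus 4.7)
The plan is to reduce the scalar ODE \eqref{eq:Ng} to a first-order autonomous problem in a phase plane adapted to the degeneracy of the porous-medium diffusion, then run a shooting argument in the speed $c$. Since the profile we seek is strictly decreasing, I would parametrize $\xi$ by the density $N\in(0,1)$ and introduce
\[
u(N) := -N^\gamma N'(\xi(N)) > 0.
\]
A short computation converts \eqref{eq:Ng} into
\[
\gamma\, u\, \frac{du}{dN} \;=\; cu - N^{\gamma+1}\bigl(1-N^\gamma\bigr),\qquad N\in(0,1),
\]
and a profile satisfying \eqref{eq:Ng-endpt} corresponds exactly to a positive solution $u$ with $u(0^+)=u(1^-)=0$.

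The local analysis at the two endpoints carries all the asymptotic content of the theorem. At $N=1$ I would insert $u=a(1-N)+o(1-N)$ and use $1-N^\gamma=\gamma(1-N)+O((1-N)^2)$. The resulting characteristic equation $\gamma a^2+ca-\gamma=0$ has a unique positive root
\[
a \;=\; \frac{-c+\sqrt{c^2+4\gamma^2}}{2\gamma} \;=\; \sqrt{1+\tfrac{c^2}{4\gamma^2}}-\tfrac{c}{2\gamma},
\]
so $u$ leaves $(N,u)=(1,0)$ along a one-dimensional unstable manifold of slope $a$, and $(\ln(1-N))'\to a$ as $\xi\to-\infty$, which is precisely \eqref{cvg-Ng-minf}. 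At $N=0$ the reaction contribution $N^{\gamma+1}(1-N^\gamma)$ is of higher order, and the natural balance $\gamma u\,du/dN \sim cu - N^{\gamma+1}$ singles out the decaying invariant branch $u(N)\sim N^{\gamma+1}/c$. Converting back via $N'=-u/N^\gamma$ gives $N'\sim -N/c$, i.e. $(\ln N)'\to -1/c$ at $+\infty$.

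With these local pieces in hand, existence, uniqueness, and the critical speed follow from a shooting argument. The unstable manifold at $N=1$ is one-dimensional, so for every $c>0$ there is a unique trajectory (up to translation in $\xi$). Since $c$ enters the vector field only through the positive term $cu$, the solution $u(\,\cdot\,;c)$ is monotone increasing in $c$, and one sets
\[
c^*_\gamma \;:=\; \inf\bigl\{\,c>0 \,:\, u(\,\cdot\,;c)>0 \text{ on }(0,1)\,\bigr\}.
\]
For $c<c^*_\gamma$ the trajectory crashes into $u=0$ at some $N_*>0$, so no admissible profile exists. At $c=c^*_\gamma$ one expects a sharp front, and indeed inserting the explicit candidate $N(\xi)=(1-e^{c\xi})^{1/\gamma}$ into \eqref{eq:Ng} and matching coefficients forces $c^2(\gamma+1)/\gamma=1$, hence $c^*_\gamma=\sqrt{\gamma/(\gamma+1)}$ and the formula of case 2. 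For $c>c^*_\gamma$, monotone dependence in $c$ and the trapping by the $u\sim N^{\gamma+1}/c$ branch at $N=0$ produce a strictly positive, strictly decreasing smooth profile (case 3); uniqueness up to translation is inherited from the uniqueness of the unstable manifold at $N=1$.

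The main obstacle is that the phase-plane ODE for $u(N)$ is singular at \emph{both} endpoints, so the local unstable/stable manifolds are not produced by standard smooth ODE theory. I would handle $N=1$ via explicit sub-/super-solutions built from the linearization above, and $N=0$ by exhibiting an invariant wedge of the form $\{c_1 N^{\gamma+1}\leq u\leq c_2 N^{\gamma+1}\}$ in which the decaying branch is trapped, thereby ruling out the spurious $u\sim (c/\gamma)N$ balance that would generate the unphysical blow-up $N'\sim -N^{1-\gamma}$. A continuity-in-$c$ argument then pins $c^*_\gamma$ to the value computed from the sharp-front ansatz, and the asymptotic rates stated in \eqref{cvg-Ng-minf} and at $+\infty$ follow from the description of the respective invariant manifolds.
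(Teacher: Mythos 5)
The paper does not supply a proof of this statement: Theorem \ref{t:gilding} is quoted from Gilding and Kersner, and the text explicitly defers to that reference ("Theorem \ref{t:gilding} is adapted from Theorem 1 in \cite{gilding2005fisher}, and therefore, we refer to this work for detailed proof"). So there is no internal proof to compare against. That said, your phase-plane reduction is exactly the right framework, and most of the technical content is accurate: the reduction $u(N):=-N^\gamma N'$, the ODE $\gamma u\,du/dN = cu - N^{\gamma+1}(1-N^\gamma)$, the characteristic equation $\gamma a^2 + ca - \gamma = 0$ at $N=1$ giving the rate in \eqref{cvg-Ng-minf}, the branch $u\sim N^{\gamma+1}/c$ at $N=0$ giving $(\ln N_\gamma)'\to -1/c$, and the direct verification that $N=(1-e^{c\xi})^{1/\gamma}$ solves \eqref{eq:Ng} precisely when $c^2(\gamma+1)/\gamma=1$, yielding $c^*_\gamma = \sqrt{\gamma/(\gamma+1)}$.

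Two items in the shooting step appear to go in the wrong direction, though, and the argument as written would not close. The trajectory leaves $(N,u)=(1,0)$ along the unique unstable direction with slope $a(c) = 2\gamma/\bigl(c+\sqrt{c^2+4\gamma^2}\bigr)$, which is \emph{decreasing} in $c$. Since two such trajectories cannot cross (at a putative crossing $u_1=u_2>0$ one would have $du_1/dN - du_2/dN = (c_1-c_2)/\gamma$ with the sign incompatible with a sign change of $u_1-u_2$ as $N$ decreases), one finds that $u(\cdot;c)$ is pointwise \emph{decreasing} in $c$, not increasing as you assert. Correspondingly, the failure mode for $c<c^*_\gamma$ is not that "the trajectory crashes into $u=0$ at some $N_*>0$": subcritical speeds give \emph{larger} $u$, and the trajectory overshoots the admissible branches, landing with $u(0^+)>0$. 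This means $N$ reaches $0$ at a finite abscissa with a nonvanishing diffusive flux $-\gamma u(0^+)\neq 0$, so flux continuity fails and no weak traveling-wave solution of \eqref{eq:Ng}--\eqref{eq:Ng-endpt} exists. At $c=c^*_\gamma$ the trajectory lands on the sharp branch $u\sim (c/\gamma)N$ (giving $N'\to -\infty$, the Hölder-$1/\gamma$ edge); for $c>c^*_\gamma$ it falls onto the smooth branch $u\sim N^{\gamma+1}/c$. The overall conclusion of the shooting argument is unchanged, but the monotonicity direction and the description of the subcritical obstruction need to be reversed for the argument to be coherent.
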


The above theorem guarantees the existence (and the uniqueness up to a shift) of a TW $\Ng$ for all $c \geq \cg^* = \sqrt{\dfrac{\gamma}{\gamma+1}}$; the smoothness of $N_\gamma$ when $c>\cg^*$; the monotonically decreasing behavior of $\Ng$ and its boundness on $\R$. Notice that the sharp front with minimal speed $c = \cg^*$ it is only Hölder continuous with exponent $1/\gamma$ at $\xi = 0$. The fact of $\Ng^{\gamma+1}$ being continuously differentiable in the whole domain means this traveling wave is a weak solution in the usual sense, while from the physics perspective, it indicates the presence of continuous flux. 

Theorem \ref{t:gilding} is adapted from Theorem 1 in \cite{gilding2005fisher}, and therefore, we refer to this work for detailed proof.

\begin{figure}[ht]
\begin{center}
\includegraphics[scale=0.35]{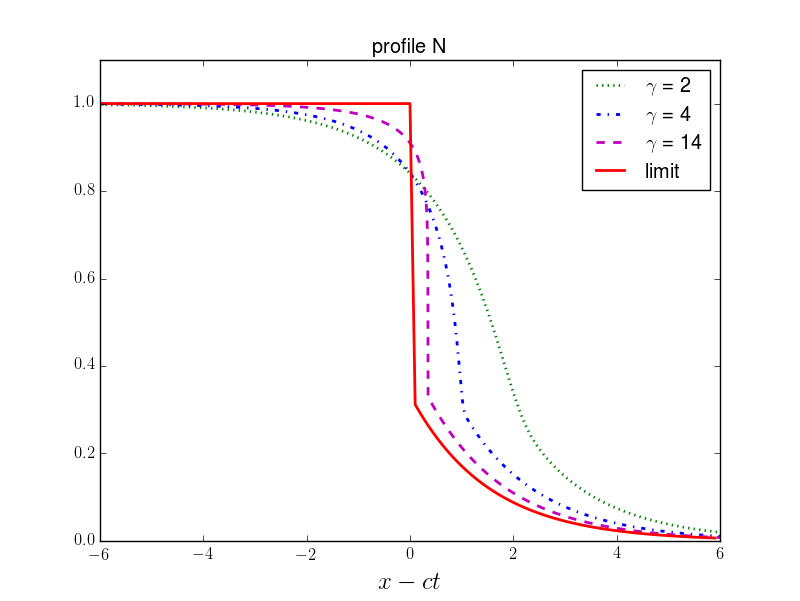}\includegraphics[scale=0.35]{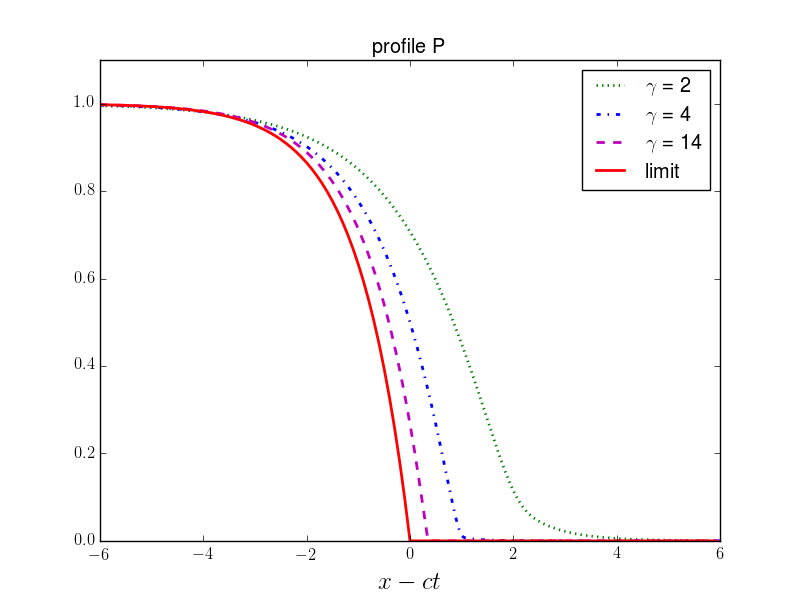}
\end{center}
\caption{{\label{fig:profiles}Density and pressure profiles for finite values of $\gamma$ and limit profiles, $c=1.5$.} 
}
\end{figure}
From now on, we  pick a velocity $c>1$ independent of $\gamma$, so that $c>\cg^*$\footnote{All the results of this paper remain true with little or no modification if the velocity $\cg$ depends on $\gamma$ in such a way that $\cg \to \bar{c}$ with $\bar{c} > 1$. However for the sake of readability we have chosen $\cg\equiv c>1$.}.
We also  fix the shift in $N_\gamma$ by imposing
\begin{equation}\label{eq:norm-N0g}
\Ng(0) = \left(\dfrac{1}{\gamma}\right)^{\frac{1}{\gamma}}.
\end{equation} 

\medskip

The goal of this subsection is to prove the following result:

\begin{prop}
Let $c>1$ and let $(N_\gamma,\Pg)$, $\Pg:=\pg(\Ng)$, be the unique bounded weak solution to \eqref{eq:Ng} satisfying \eqref{eq:norm-N0g}. 
Let $(N_{HS}, P_{HS})\in L^\infty\cap W^{1,\infty}(\R)$ be the reference traveling wave solution moving with speed $c$ of the Hele-Shaw system, see \eqref{def:barN-P}.

\begin{enumerate}
    \item The following convergence properties hold:
    \begin{itemize}
    \item \textit{Weak-star convergence:}\begin{align*}
\Ng & \rightharpoonup N_{HS}\quad \text{in } {w^{*}-L^{\infty}},\quad\Pg  \rightharpoonup P_{HS} \quad \text{in } {w^{*}-W^{1,\infty}};\\
\end{align*} 
\item for any compact set $K\subset\R$
\begin{align*}
\Pg &\to P_{HS} \quad \text{in } {C(K)};
\end{align*}
\item $\Ng \to 1 $ uniformly on $\R_-$ and $\Pg'\to P_{HS}'$ uniformly in $\mathcal C(]-\infty, 0])$. 
\end{itemize}

\item Pointwise bounds for $\Pg$ on $\R_-$: setting $\lambda=(-c+ \sqrt{c^2 +4})/2$, we have, 
\begin{equation}\label{eq:encadr_P_R-}
1- \left( 1 - \frac1\gamma\right) e^{\lambda \xi}\leq \Pg \leq 1 - \left( 1 - \frac1\gamma\right) e^{\xi}, \quad \forall \ \xi \leq 0;
\end{equation}


\end{enumerate}

\label{prop:qualitative-Pg-Ng}
\end{prop}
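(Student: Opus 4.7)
The plan is to establish the pointwise bounds of Part (2) first, as they feed directly into the convergence results of Part (1). Setting $R_\gamma := 1 - P_\gamma$ and using $N_\gamma = P_\gamma^{1/\gamma}$, equation \eqref{eq:Ng} can be rewritten as
\[
R_\gamma'' = R_\gamma + \frac{R_\gamma'(R_\gamma' - c)}{\gamma(1 - R_\gamma)} .
\]
By Theorem \ref{t:gilding} the profile $R_\gamma$ is smooth and monotonically increasing from $0$ to $1$. Moreover, the flux $J_\gamma := cN_\gamma + N_\gamma P_\gamma'$ satisfies $J_\gamma' = -N_\gamma(1-P_\gamma) \in [-1,0]$ with $J_\gamma(-\infty) = c$, $J_\gamma(+\infty) = 0$, hence $J_\gamma \in [0,c]$; together with $R_\gamma' = c - J_\gamma/N_\gamma$, this forces $R_\gamma' \in [0,c]$ on $\R$ so that $R_\gamma'(R_\gamma' - c) \leq 0$.

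The central device for Part (2) is the auxiliary function $\Phi_\mu(\xi) := R_\gamma(\xi) e^{-\mu\xi}$. Its behavior at $-\infty$ is dictated by the Gilding--Kersner asymptotics $R_\gamma(\xi) \sim A e^{\mu_+ \xi}$, where $\mu_+$ is the positive root of $\mu^2 + (c/\gamma)\mu - 1 = 0$ and satisfies $\lambda < \mu_+ < 1$ for $\gamma > 1$. For the \emph{upper bound} $P_\gamma \leq 1 - (1-1/\gamma)e^\xi$, I take $\mu = 1$, so that $\Phi_1(-\infty) = +\infty$ and $\Phi_1(0) = 1-1/\gamma$; if $\Phi_1$ dipped strictly below $1-1/\gamma$ somewhere on $\R_-$ it would attain an interior minimum $\xi_0$, and $\Phi_1'(\xi_0) = 0$ would force $R_\gamma'(\xi_0) = R_\gamma(\xi_0)$, making the ODE yield
\[
R_\gamma''(\xi_0) - R_\gamma(\xi_0) = \frac{R_\gamma(\xi_0)(R_\gamma(\xi_0) - c)}{\gamma(1-R_\gamma(\xi_0))} < 0
\]
since $R_\gamma \in (0, 1) \subset (0, c)$; this contradicts $\Phi_1''(\xi_0) \geq 0$. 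For the \emph{lower bound} $P_\gamma \geq 1 - (1-1/\gamma)e^{\lambda\xi}$, I take $\mu = \lambda < \mu_+$, so that $\Phi_\lambda(-\infty) = 0$ and $\Phi_\lambda(0) = 1-1/\gamma$; at an interior maximum $\xi_0$ exceeding $1-1/\gamma$, $\Phi_\lambda'(\xi_0) = 0$ gives $R_\gamma'(\xi_0) = \lambda R_\gamma(\xi_0)$, and using $1 - \lambda^2 = c\lambda$, the ODE produces
\[
R_\gamma''(\xi_0) - \lambda^2 R_\gamma(\xi_0) = \frac{\lambda R_\gamma(\xi_0)\bigl[c(\gamma-1) - R_\gamma(\xi_0)(c\gamma - \lambda)\bigr]}{\gamma(1-R_\gamma(\xi_0))} .
\]
The monotonicity $R_\gamma(\xi_0) < R_\gamma(0) = 1-1/\gamma$ makes the bracket strictly positive (since $\lambda > 0$), contradicting $\Phi_\lambda''(\xi_0) \leq 0$. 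Together the two contradictions yield \eqref{eq:encadr_P_R-}.

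For Part (1), the uniform Lipschitz control of $J_\gamma$ with values in $[0,c]$, the monotonicity bound $|P_\gamma'| \leq c$ on $\R$, and the identity $P_\gamma' = J_\gamma/N_\gamma - c$ together with the lower bound $N_\gamma \geq N_\gamma(0) = \gamma^{-1/\gamma}$ on $\R_-$, give weak-$*$ compactness of $(\Ng)$ in $L^\infty(\R)$ and of $(\Pg)$ in $W^{1,\infty}(\R)$. Passing to the limit in the weak formulation of \eqref{eq:Ng} identifies any subsequential limit $(\bar N, \bar P)$ as a bounded Hele-Shaw traveling wave: the exclusion relation $(1-\bar N)\bar P = 0$ emerges on $\R_-$ from $\bar N = 1$ (itself a consequence of $N_\gamma \geq (1 - (1-1/\gamma)e^{\lambda\xi})^{1/\gamma} \to 1$, uniform on $\R_-$) and on $\R_+$ from $0 \leq P_\gamma \leq P_\gamma(0) = 1/\gamma \to 0$. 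Lemma \ref{tw_HS} and the normalization $P_\gamma(0) = 1/\gamma$ fix the limit as $(N_{HS}, P_{HS})$, which upgrades to convergence of the full sequence. The stronger convergence statements then follow from Arzel\`a-Ascoli (for $\Pg \to P_{HS}$ in $C(K)$ using the equi-Lipschitz bound $|P_\gamma'|\leq c$), from the explicit pointwise lower bound on $N_\gamma$ (for $\Ng \to 1$ uniformly on $\R_-$), and once more from $P_\gamma' = J_\gamma/N_\gamma - c$ (for $\Pg' \to P_{HS}'$ uniformly on $(-\infty, 0]$).

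The main obstacle is the lower bound of Part (2). A direct maximum-principle argument applied to $P_\gamma - U_-$, with $U_- := 1 - (1-1/\gamma)e^{\lambda\xi}$, is obstructed by the singular term $1/(\gamma u)$ in the nonlinear operator $\mathcal L[u] := u'' + (1-u) + u'(u'+c)/(\gamma u)$: its linearization around $U_-$ has a zeroth-order coefficient of order $\gamma$ near $\xi = 0$ (where $P_\gamma$ is as small as $1/\gamma$), destroying the sign needed for a standard comparison principle. The change of variable $R_\gamma \mapsto R_\gamma e^{-\lambda\xi}$, combined with the algebraic identity $\lambda^2 + c\lambda = 1$, sidesteps this difficulty by collapsing the nonlinear comparison into a single scalar inequality that can be checked from the monotonicity $R_\gamma(\xi_0) < R_\gamma(0)$ alone.
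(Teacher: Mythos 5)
Your proof is correct, and for the pointwise bounds of Part~(2) — which are the crux of the result — you take a genuinely different route from the paper. The paper derives a \emph{one-sided} differential inequality directly on $\Pg$: for the lower bound it writes $-\Pg'' = 1-\Pg + \frac{c\Pg'}{\gamma\Pg} + \frac{(\Pg')^2}{\gamma\Pg} \geq 1-\Pg + c\Pg'$, bounding the singular term via $\gamma\Pg\geq 1$ on $\R_-$ (which is just $\Pg\geq\Pg(0)$), and then runs the comparison principle against the explicit sub-solution $1-(1-1/\gamma)e^{\lambda\xi}$, which solves that linear ODE exactly; the upper bound is analogous with the simpler inequality $-\Pg''\leq 1-\Pg$. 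This $\gamma\Pg\geq 1$ trick directly dissolves the obstruction you describe at the end, so the direct comparison is not actually blocked. Your route instead transfers to $R_\gamma=1-\Pg$, multiplies by $e^{-\mu\xi}$ with the appropriate $\mu$, and argues at an interior extremum of $\Phi_\mu$, exploiting $R_\gamma'\in[0,c]$ and the identity $1-\lambda^2=c\lambda$. The upshot is that you never need to find a clever one-sided linearization of the singular operator: the extremum conditions $R_\gamma'(\xi_0)=\mu R_\gamma(\xi_0)$ collapse the nonlinearity into a single scalar sign check, with the monotonicity $R_\gamma(\xi_0)<R_\gamma(0)$ closing the bracket. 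The paper's argument is shorter once you see the $\gamma\Pg\geq 1$ bound; yours is more mechanical and arguably more robust to changes in the pressure law, at the cost of heavier algebra at $\xi_0$. One small caveat on Part~(1): your sketch elides the step that upgrades uniform convergence of $J_\gamma$ from compacts to all of $\R_-$ — the paper obtains it from the decay $|J_\gamma'|\leq e^{\lambda\xi}$, which is itself a byproduct of the Part~(2) bounds — and this is precisely what is needed to conclude uniform convergence of $\Pg'$ on $(-\infty,0]$ from the identity $\Pg'=J_\gamma/\Ng - c$.
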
 
The rest of this subsection is devoted to the proof of Proposition \ref{prop:qualitative-Pg-Ng}.

\noindent	\textbf{$L^\infty$ bounds.} From the maximum packing constraint, we know that $0 \leq P_\gamma \leq p_M=1$. Then, standard comparison provides 
\[
\ba
0\leq N_\gamma \leq \underset{\gamma\rightarrow + \infty}{\lim}\left(p_M\right)^{1/\gamma}=1.\ea
\]
From Theorem \ref{t:gilding}, we know that $ N_\gamma' \leq 0$, which combined with the definition of the stiff pressure yields  $P_\gamma' \leq 0$.

Therefore there exist $(N,P)\in L^\infty\times L^\infty(\R)$ such that up to the extraction of a subsequence, $N_\gamma \rightharpoonup N$, $P_\gamma \rightharpoonup P$ in $w^*-L^\infty(\R)$. Furthermore, $N, P$ are non-increasing. The choice of shift \eqref{eq:norm-N0g} implies that $\Ng(0)\rightarrow 1, \Pg(0)\to 0$. 
Hence, since $\Ng$ is non-increasing $\Ng$ converges uniformly towards 1 on $]-\infty, 0]$, and $\Pg$ converges uniformly towards zero on $[0, +\infty[$. It follows that
$N(\xi)=1$ for $\xi<0 $ and $P(\xi)=0$ for $\xi>0$.

\medskip
\noindent\textbf{Strong convergence  of $\Pg$ and $J_\gamma$.} Define the flux $J_\gamma:= c \Ng + \gamma \Ng' \Ng^\gamma= c \Ng + \Ng \Pg'$. We observe that equation \eqref{eq:Ng} can be written as
\[
J_\gamma '= -\Ng (1-\Ng^\gamma),
\]
so that $J_\gamma$ is decreasing on $\R$. 
Combining the latter with the $L^{\infty}$ bounds on $\Ng$ yields
\begin{equation}\label{est:boundJ'}
\begin{split}
-1 \leq J_{\gamma}'&\leq 0,\\
0=J_\gamma(+\infty)\leq J_\gamma&\leq J_\gamma (-\infty)= c.
\end{split}
\end{equation}
In particular, $c\Ng+ \Ng \Pg'\geq 0$. Since we already know that $\Pg$ is non-increasing, it follows that 
\be\label{est:Pg-lipschitz}
-c\leq \Pg'\leq 0,\quad 0\leq \Pg\leq 1.
\ee

From inequality \eqref{est:boundJ'} (resp. \eqref{est:Pg-lipschitz}) and Ascoli's theorem, $J_\gamma$ (resp. $\Pg$) converges strongly, up to a subsequence, in $\mathcal C(K)$ for any compact set $K\subset \R$.  Note also that $\Pg' \overset{*}{\rightharpoonup} P'$ in $L^\infty(\R)$; since  $\Ng$ converges uniformly towards 1 on $\R_-$, we find that $J=cN+NP'$ on $]-\infty, 0[$. 

The exact same cannot be done with $\Ng$. Indeed, from \eqref{eq:Ng} and \eqref{est:boundJ'}, we can deduce the following bounds for $\Ng$ 
\begin{equation}\label{est:boundNg'}
    -c\frac{\Ng}{\Ng^{\gamma}\gamma} \leq  \Ng'\leq \frac{c(1-\Ng)}{\Ng^{\gamma}\gamma}.
\end{equation}
Note that obtaining an $L^{\infty}$ bound implies controlling $\Ng^{1-\gamma}\gamma^{-1}$ in $L^{\infty}$ over any compact on $\mathbb{R}$ when $\gamma\rightarrow+\infty$. This is impossible as $\Ng\in(0,1)$. In fact, we show in what follows that $N$ is discontinuous in $\xi=0$.

\medskip
\noindent	\textbf{Passing to the limit in equation \eqref{eq:Ng}.} We can write the diffusion term as
\[
(\Ng \Pg')'=(\gamma \Ng' \Ng^\gamma)'=\frac{\gamma}{\gamma+1}(\Ng^{\gamma+1})''=\frac{\gamma}{\gamma+1} (\Pg \Ng)''.
\]
Since $\Pg$ converges strongly in $\mathcal C(K)$ for all compact set $K\subset \R$, while $\Ng$ converges weakly-* in $L^\infty(\R)$, we can pass to the (weak) limit in equation \eqref{eq:Ng}.

We obtain that $(N,P)$ satisfies the following equation in the sense of distributions
\be\label{eq:limit}
-cN' - (NP)''=N(1-P).
\ee
The same argument also shows that $J=cN + (NP)'$ on $\R$.

\medskip
\noindent
\textbf{Limit in the free-phase ($\xi>0$).}
We recall that $P=0$ in $\R_+$. Hence, in $(0, + \infty)$, equation \eqref{eq:limit} becomes 
\[
-cN'=N.
\]
We recognize the ODE satisfied by $N_{HS}$ in the free-phase in the Hele-Shaw system.
It follows that
\[
N(\xi)=C\exp \left(-\frac{\xi}{c}\right)\quad \forall \xi>0,
\]
for some $C>0$.

\medskip
\noindent
\textbf{Limit in the congested phase $(\xi<0)$.} We recall that $N=1$ on $]-\infty, 0[$. Inserting this information into \eqref{eq:limit},
 the following elliptic equation (complementarity equation) is obtained
	\begin{equation}\label{eq:compl-p}
	P'' + (1-P) = 0,\quad\text{in}\quad\mathcal{D}'((-\infty,0)).
	\end{equation}
	From
	$P(0) = 0$ (recall that $P$ is continuous), it follows that $P(\xi) = 1- e^{\xi}$ for $\xi \in \R_-$.

\medskip
\noindent
\textbf{(N,P) satisfies \eqref{Heleshaw2}.} We know that $P=0$ on $[0,+\infty)$ and $N=1$ on $\mathbb{R}_{-}$; hence, $P(1-N)=0$ on $\mathbb{R}$ as in \eqref{Heleshaw2}.

\medskip
\noindent
\textbf{Jump relation at $\xi=0$.} 
We recall that the flux $J=cN + (NP)'$ is continuous on $\R$, and in particular at $\xi=0$.
 Thus,
\begin{equation}\label{est:jump}
\lim_{\xi \to 0^+} N(\xi) = 1 - \frac{1}{c}.
\end{equation}
Gathering all the information, we find that $(N,P)=( N_{HS}, P_{HS})$. Furthermore, since the limit is uniquely identified, we deduce that the whole sequence $(\Ng, \Pg)$ converges (in the sense given above).

\medskip
\noindent
\textbf{Sub- and super-solution for $\Pg$ on $\R_-$.}
 Using \eqref{est:Pg-lipschitz}, it follows that
\[
-\Pg'' \Ng = \Ng (1-\Pg) + (c+ \Pg') \Ng' \leq \Ng (1-\Pg),
\]
whence
\[
-\Pg'' \leq 1-\Pg\quad \text{on }\R.
\]
Now, let $\xi_1\in \R$ be arbitrary, and let $P_1:=\Pg(\xi_1)$.  We have for $P_+:= 1- (1-P_1) e^{\xi-\xi_1}$ that $- P_+''=1- P_+$. Furthermore, 
\[
-(\Pg- P_+)''\leq 1-(\Pg- P_+) \text{ on }(-\infty,\xi^{*}).
\]
It follows from the maximum principle that $P_\gamma\leq  P_+$ for $\xi\leq \xi^*$. In particular, taking $\xi_1=0$ and $P_1=1/\gamma$,
\begin{equation}\label{est:superPg}
0\leq \Pg(\xi) \leq 1 - \left(1-\frac{1}{\gamma}\right) e^\xi\quad \forall \xi \leq 0.
\end{equation}
In a similar fashion, recalling that $\gamma \Pg\geq 1$ on $\R_-$ and $\Pg'\leq 0$, we have
\[
-\Pg''= 1-\Pg + \frac{c\Pg'}{\gamma\Pg} + \frac{(\Pg')^2}{\gamma \Pg}\geq 1-\Pg + c\Pg'.
\]
Arguing as before, we define $P_-(\xi)= 1 - \left(1-\frac{1}{\gamma}\right) e^{\lambda \xi}$, where $\lambda$ is the positive root of $\lambda^2 + c\lambda-1=0$ (i.e. $\lambda=(-c + \sqrt{c^2 +4})/2$). By definition of $\lambda$, $P_-$ satisfies
\[
-P_-''= 1-P_- + cP_-', \quad \lim_{\xi\to -\infty} P_-(\xi)=1,\ P_-(0)=\frac{1}{\gamma}.
\]
We infer that 
\begin{equation}\label{est:subPg}
1 - \left(1-\frac{1}{\gamma}\right) e^{\lambda\xi} \leq \Pg(\xi) \qquad \forall \ \xi \leq 0.
\end{equation}

\medskip
\noindent
\textbf{Uniform convergence of the flux and of $\Pg'$ on $\R_-$.}

We recall that $J_\gamma'= -\Ng (1-\Pg)$. The pointwise bounds on $\Pg$ imply that
\[
|J_\gamma'|\leq e^{\lambda \xi}\quad \forall \xi\leq 0,\ \forall \gamma>0.
\]
It follows immediately that $J_\gamma$ converges towards $J_{HS}$ uniformly in $\mathcal C(\R_-)$. Since
\[
\Pg'= \frac{J_\gamma}{\Ng}- c,
\]
we infer that $\Pg'$ also converges uniformly towards $ P_{HS}'$ in $\mathcal C(\R_-)$.
This concludes the proof of Proposition \ref{prop:qualitative-Pg-Ng}.\qed

\subsection{Phase portrait of $\Ng$ and further consequences}
\label{ssec:phase-portrait}

In this subsection, we derive other properties of the family $(N_\gamma)_{\gamma>0}$, which will be useful in our stability analysis. 
These properties rely crucially on the analysis of the phase portrait of $\Ng$. 

In order to plot the phase portrait of $\Ng$, we use the results of \cite{medvedev2003travelling}, together with the following remark: using equation \eqref{eq:Ng}, we have
\begin{eqnarray*}
\frac{d \Ng '}{d\Ng}&=& \frac{d \Ng '}{d\xi}\frac{d\xi}{d\Ng}\\
&=& -\frac{1}{\gamma \Ng^\gamma \Ng'}\left[ c \Ng' + \gamma^2 (\Ng')^2 \Ng^{\gamma-1} + \Ng (1-\Ng^\gamma)\right].
\end{eqnarray*}
Hence $d\Ng'/d\Ng$ vanishes if and only if $\Ng^\gamma(1-\Ng^\gamma)\leq c^2/(4\gamma^2)$ and  $\Ng'\in \{Q_-(\Ng), Q_+(\Ng)\}$, where
\be\label{def:Q_pm}
Q_\pm(\Ng)= \frac{1}{2 \gamma^2 \Ng^{\gamma-1}}\left(- c \pm \sqrt{c^2 -4\gamma^2 \Ng^\gamma(1-\Ng^\gamma)}\right).
\ee
Note that the curves $\Gamma_\pm=\{(N,Q_\pm(N)),\ N\in (0,1)\}$ each consist of two branches, for $N\in (0, N_1)$ and $N\in (N_2,1)$.
 The points $N_i$ are the roots of the discriminant, i.e. $N_i^\gamma (1-N_i^\gamma)= c^2/(4\gamma^2)$. The curves $\Gamma_+$ and $\Gamma_-$ intersect at $N=N_1$ and at $N=N_2$.
 A straightforward analysis shows that
\[
N_1=1-\frac{2\ln \gamma}{\gamma} + o\left(\frac{\ln \gamma}{\gamma} \right) ,\quad N_2= 1-\frac{c^2}{4 \gamma^3} + o \left( \frac{1}{\gamma^3}\right),
\]
with 
\[
Q_\pm (N_1)\sim - \frac{2}{c},\quad Q_\pm (N_2) \sim - \frac{c}{2\gamma^2}.
\]
Furthermore, $Q_+(N)\sim -\frac{N}{c}$ for $N\ll 1$, while $Q_-(N)\to -\infty$ as $N\to 0$, and \vbox{$Q_+(N)\sim -\gamma(1-N)/c$} for $1-N\ll 1$, while $Q_-(1)=-c/\gamma^2$.

Note also that with the normalisation of the previous section, i.e. $\Ng(0)=\gamma^{-1/\gamma}$, we have $\Ng(0)\in [N_1, N_2]$.

Now, let us denote by $\mathcal T$ (resp. $\mathcal S$) the interior region between the curves $\Gamma_-$ and $\Gamma_+$ for $0<N<N_1$ (resp. $N_2<N<1$). 
We also denote by $\Gamma$ the curve $(\Ng, \Ng')$, which we orientate in the direction of growing $\Ng$.
We make the following observations:
\begin{enumerate}[label=(\roman*)]
    \item For all $\Ng\in (N_1, N_2)$, $d\Ng'/d \Ng \geq 0$;
    
    \item For all $\Ng \in (0, N_1)$ (resp. \vbox{$\Ng\in (N_2, 1)$}), $d\Ng'/d \Ng < 0$ iff $(\Ng, \Ng')\in \mathcal T$ (resp. $(\Ng, \Ng')\in \mathcal S$);
    
    \item If $\Gamma$ crosses one of the curves $\Gamma_\pm$, then $d\Ng'/d \Ng=0$ at the crossing point and therefore the tangent to $\Gamma$ at the crossing point is horizontal;
    
    \item $\displaystyle \frac{d Q_\pm}{dN}\gtrless 0$ for all $N\in (N_2, 1)$;
    
    \item $\displaystyle \frac{d Q_\pm}{dN}\lessgtr 0$ for all $N\in (0,N_1)$;
    
    \item When $\xi\to -\infty$, we have $\Ng(\xi)\to 1$, and $\Ng'(\xi)\sim -\left(\sqrt{1 + \dfrac{c^2}{4 \gamma^2}} - \dfrac{c}{2\gamma}\right)(1-\Ng(\xi))$.
\end{enumerate}

\begin{figure}[ht]
\begin{center}
\includegraphics[scale=0.45]{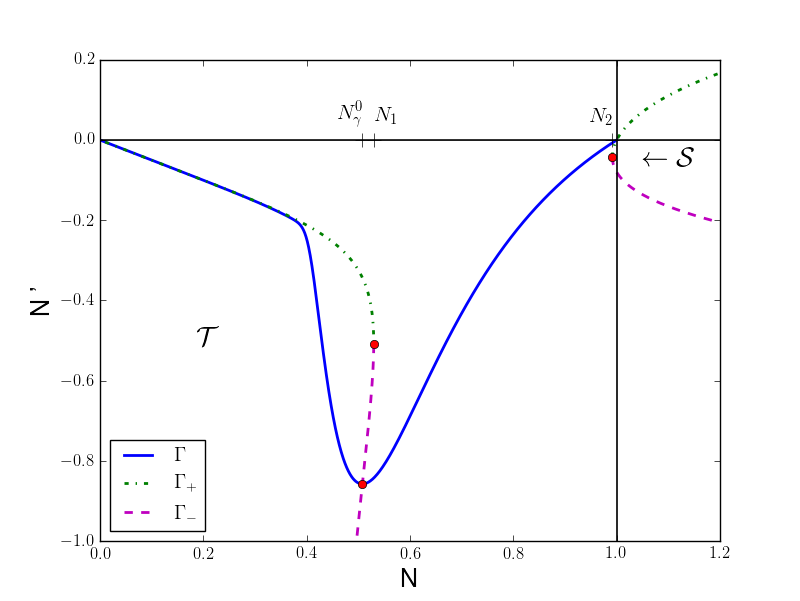}
\end{center}
\caption{{\label{fig:phaseplane}Trajectory $\Gamma$ in the phase plane $(N,N')$, $c=2$, $\gamma=5$.} 
}
\end{figure}
The proof of all items is easy and left to the reader, except for (v), which we prove below.
Note that (vi) is a consequence of \eqref{cvg-Ng-minf}. 
It follows from (vi) that for $\Ng$ in a neighborhood of $1$ (the size of which depends on $\gamma$), the curve $\Gamma$ is above $\Gamma_+$. Furthermore, (iii) and (iv) imply that if the curve $\Gamma$ intersects the region $\mathcal S$, then it cannot exit $\mathcal S$. It follows that $\Gamma$ lies strictly above $\Gamma_+$ for all $N\in (N_2, 0)$. 
\emph{Consequently, for all $\Ng \in (N_1, 1)$, $d\Ng'/d\Ng\geq 0$. }

Let us now prove that $dQ_+/dN\leq 0$ for all $N\in (0, N_1)$ (the inequality for $Q_-$ is easier and left to the reader). We have, setting $P=N^\gamma$, 
\begin{eqnarray*}
\frac{dQ+}{dN}&=&\frac{d}{dN}\left[ - \frac{c}{2\gamma^2 N^{\gamma-1}} \left( 1-\sqrt{1 - \frac{4\gamma^2}{c^2} N^\gamma (1-N^\gamma)}\right)\right]\\
&=& - \frac{c}{2\gamma^2 N^{\gamma-1}}\left[-\frac{\gamma-1}{N}\left( 1-\sqrt{1 - \frac{4\gamma^2}{c^2} N^\gamma (1-N^\gamma)}\right)
+ \frac{2\gamma^3}{c^2} \frac{N^{\gamma-1} (1-2 N^\gamma)}{\sqrt{1 - \frac{4\gamma^2}{c^2} N^\gamma (1-N^\gamma)}}\right]\\
&=& - \frac{c}{2\gamma^2 N^\gamma}\left[ -\frac{4 \gamma^2(\gamma-1)P (1-P)}{c^2(1 + \sqrt{ 1- \frac{4\gamma^2}{c^2} P(1-P)})} + \frac{2\gamma^3 P(1-2P)}{c^2\sqrt{ 1- \frac{4\gamma^2}{c^2} P(1-P)} }\right]\\
&=&- \frac{\gamma (1-2P) - (\gamma-2 + 2P) \sqrt{ 1- \frac{4\gamma^2}{c^2} P(1-P)}}{c(1 + \sqrt{ 1- \frac{4\gamma^2}{c^2} P(1-P)})\sqrt{ 1- \frac{4\gamma^2}{c^2} P(1-P)} }.
\end{eqnarray*}
Note that for $0<N<N_1$, $P=O(1/\gamma^2)$. In this regime, it can be easily checked that the numerator of the right-hand side is positive, and therefore $dQ_+/dN < 0$ for all $N\in (0,N_1)$.
This completes the proof of (v).

We deduce that for $N\in (0, N_1)$, the curve $\Gamma$ can cross $\Gamma_+$ at most once, as $\Gamma$ exits the region $\mathcal T$. This completes the proof of (v).
Now, let us argue by contradiction and assume that there exists $\Ng^0\in (0, N_1)$ such that $(\Ng^0, (\Ng^0)')\in \Gamma$ lies above $\Gamma_+.$
Then there are two possibilities:
\begin{itemize}
    \item either $(\Ng, \Ng')$ is above $\Gamma_+$ for all $\Ng\in (0, N_1)$. In that case, $d\Ng'/d\Ng\geq 0$ for all $\Ng\in (0,N_1)$. Since $(0,0)\in \Gamma$ and $\Ng'\leq 0$, it follows that $\Ng'=0$ for all $\Ng\in (0, N_1)$, which contradicts Theorem \ref{t:gilding}.
    
    \item or there exists $\Ng^1\in (0, \Ng^0)$ such that $(\Ng^1, (\Ng^1)')\in \Gamma\cap \mathcal T$. In that case, since $\Gamma$ and $\Gamma_+$ intersect at most once, there exists $N_3\in (0, N_1)$ such that for all $\Ng \in (0, N_3)$, $(\Ng, \Ng')\in \Gamma \cap \mathcal T$ and for $\Ng\in  (N_3, N_1)$, $(\Ng, \Ng')$ is above $\Gamma_+$. Hence $\Ng'$ reaches a minimum for $\Ng=N_3$, and the value of this minimum is $Q_+(N_3)\geq Q_+(N_1)\sim-2/c$. Thus $\Ng'$ is bounded in $L^\infty$. Using Ascoli's theorem, we infer that $\Ng$ converges uniformly on  $\mathcal C(K)$ for any compact set $K\subset \R$ as $\gamma\to \infty$. Since $N_{HS}$ is discontinuous at $\xi=0$, we have reached a contradiction. 
\end{itemize}
We conclude that $(\Ng, \Ng')$ remains below $\Gamma_+$ for all $\Ng\in (0, N_1)$, and therefore $\Gamma$ does not cross $\Gamma_+$. 
Using once again the fact that $\min \Ng'$ must blow up as $\gamma\to \infty$, we infer that $\Gamma$ and $\Gamma_-$ intersect exactly once, at a point where $\Ng=\Ng^0\in (0, N_1)$, and $\Ng^0$ is such that $Q_-(\Ng^0)\to -\infty$ as $\gamma\to +\infty$. For all $\Ng\in (0, \Ng^0)$, $d\Ng'/d\Ng\leq 0$, and for $\Ng\in (\Ng^0, 1)$, $d\Ng'/d\Ng\geq 0$. Thus we obtain the phase portrait drawn in Figure \ref{fig:phaseplane}.

Let us now go back to the analysis of $\xi\in \R \mapsto \Ng(\xi)$. There exists a unique $\xi^0_\gamma\in \R$ such that $\Ng(\xi^0_\gamma)=\Ng^0$. Note that $d\Ng'/d\Ng$ and $\Ng''$ have opposite signs. Hence, $\Ng$ is concave on $(-\infty, \xi^0_\gamma)$ and convex on $(\xi^0_\gamma, + \infty)$. 
We are now ready to prove the following Lemma:
\begin{lem}
\label{lem:transition}
We normalize the function $\Ng$ so that $\Ng(0)= \gamma^{-1/\gamma}$. We have the following properties:
\begin{itemize}
    \item $\xi^0_\gamma>0$ and $\lim_{\gamma\to \infty} \xi^0_\gamma=0$;
    
    \item $\sup_{\gamma>0} \sup_{\xi <0} |\Ng'(\xi)| <+\infty$ and $\|\Ng'\|_{L^\infty(\R)}=-Q_-(\Ng^0)\to + \infty$ as $\gamma \to \infty$;
    
    \item $\lim_{\gamma\to \infty } \Ng^0= 1 - c^{-1}$; 
    
    \item For $\gamma $ large enough, for  all $\xi\geq \xi^0_\gamma$,
    \[
    0\leq \Ng(\xi)\leq \Ng^0 \exp\left( - \frac{1}{2c} (\xi-\xi_0)\right);
    \]
    
    \item $\Pg'\to P_{HS}'$ and $\Ng\to N_{HS}$ in $L^p_\text{loc} (\R)$ for all $p\in [1, + \infty[;$
    
    \item Let $\xi^*_\gamma>\xi^{0}_\gamma$ such that $\Ng'(\xi^*_\gamma)=-\frac{1}{c}\left(1-\frac{1}{c}\right)$. Then $\xi^*_\gamma \to 0$  and $\Ng(\xi^*)\to 1 - c^{-1}$ as $\gamma \to \infty$.

\end{itemize}

\end{lem}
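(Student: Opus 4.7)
The proof combines the phase portrait from Subsection \ref{ssec:phase-portrait} with the flux identity $J_\gamma=c\Ng+\gamma\Ng^\gamma\Ng'$ and the convergences of Proposition \ref{prop:qualitative-Pg-Ng}. The positivity $\xi^0_\gamma>0$ follows immediately from the normalization \eqref{eq:norm-N0g}: one has $\Ng(0)^\gamma=1/\gamma\in[N_1^\gamma,N_2^\gamma]$ for $\gamma$ large (the endpoints being approximately $c^2/(4\gamma^2)$ and $1-c^2/(4\gamma^2)$), so $\Ng(0)\geq N_1>\Ng^0$ and monotonicity gives $\xi^0_\gamma>0$. The uniform $L^\infty(\R_-)$-bound on $\Ng'$ follows from rewriting $\Ng'=(J_\gamma-c\Ng)/(\gamma\Pg)$, using $J_\gamma\in[0,c]$, $\Ng\in[0,1]$, and the monotonicity of $\Pg$ (which yields $\gamma\Pg(\xi)\geq\gamma\Pg(0)=1$ on $\R_-$) to obtain $|\Ng'|\leq 2c$. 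The identity $\|\Ng'\|_{L^\infty(\R)}=-Q_-(\Ng^0)$ is then a direct consequence of the phase portrait: $\Ng'$ is decreasing on the concave region $(-\infty,\xi^0_\gamma)$ and increasing on the convex region $(\xi^0_\gamma,+\infty)$, so its minimum is attained at $\xi^0_\gamma$ with value $Q_-(\Ng^0)$, and by (v) this tends to $-\infty$.

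The convergences $\xi^0_\gamma\to 0$ and $\Ng^0\to 1-1/c$ are established jointly. Since $(\Ng^0)^\gamma\leq c^2/(4\gamma^2)$, a direct computation using the definition of $Q_-$ gives $\gamma(\Ng^0)^\gamma Q_-(\Ng^0)=O(1/\gamma)$, so the flux identity at $\xi^0_\gamma$ reads $J_\gamma(\xi^0_\gamma)=c\Ng^0+O(1/\gamma)$. To rule out $\xi^0_\gamma\to\xi^0_\infty\in(0,+\infty]$ along a subsequence, I would exploit the concavity of $\Ng$ on $(-\infty,\xi^0_\gamma)$: the chord joining $(0,\Ng(0))$ and $(\xi^0_\gamma,\Ng^0)$ lies below $\Ng$, yielding $\Ng(\eta)\geq(1-\eta/\xi^0_\gamma)\Ng(0)+(\eta/\xi^0_\gamma)\Ng^0$ for $\eta\in(0,\xi^0_\gamma)$. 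Using $\Ng(0)\to 1$ together with Helly's theorem (which provides pointwise convergence $\Ng\to N_{HS}$ at every continuity point of $N_{HS}$), the chord limit is $1-O(\eta/\xi^0_\infty)$, whereas $\Ng(\eta)\to(1-1/c)e^{-\eta/c}\leq 1-1/c+O(\eta)$; for $\eta$ small enough these are incompatible, so $\xi^0_\infty=0$. The uniform convergence $J_\gamma\to J_{HS}$ on compacts of $\R$ combined with $J_{HS}(0)=c-1$ then forces $\Ng^0\to 1-1/c$.

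For the exponential decay in item~4, I would study $u:=-\Ng'/\Ng=-(\ln\Ng)'$, which satisfies the Riccati-type equation
\[
\gamma\Pg\,u'=\gamma(\gamma+1)\Pg u^2-cu+(1-\Pg)
\]
derived from \eqref{eq:Ng}. Since $\Pg\leq c^2/(4\gamma^2)$ on $(\xi^0_\gamma,+\infty)$, a direct evaluation at $u=1/(2c)$ shows that the right-hand side equals $1/2-\Pg+\gamma(\gamma+1)\Pg/(4c^2)\geq 1/2-o(1)>0$ for $\gamma$ large. Combined with $u(\xi^0_\gamma)=|Q_-(\Ng^0)|/\Ng^0\to+\infty$, this rules out any downward crossing of the level $u=1/(2c)$, so $u\geq 1/(2c)$ throughout $(\xi^0_\gamma,+\infty)$, and integrating $(\ln\Ng)'=-u\leq -1/(2c)$ yields the announced bound. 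The $L^p_\text{loc}$ convergence of $\Ng$ (item~5) follows from Helly's theorem together with dominated convergence, while that of $\Pg'$ follows from combining the uniform convergence on $\R_-$ with the uniform bound $|\Pg'|\leq c$ and the elementary observation $\int_a^b|\Pg'|=\Pg(a)-\Pg(b)\to 0$ for $0<a<b$. Finally, for item~6, the flux identity at $\xi^*_\gamma$ combined with $\gamma\Pg(\xi^*_\gamma)\to 0$ gives $J_\gamma(\xi^*_\gamma)=c\Ng(\xi^*_\gamma)+o(1)$, while $u(\xi^*_\gamma)\Ng(\xi^*_\gamma)=(1-1/c)/c$ together with the asymptotic $u\to 1/c$ on the free zone (again obtained from the Riccati equation in the regime $\Pg\to 0$) pins down $\Ng(\xi^*_\gamma)\to 1-1/c$; pointwise convergence of $\Ng$ at continuity points of $N_{HS}$ then forces $\xi^*_\gamma\to 0$.

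\textbf{Main difficulty.} The most delicate point is the location $\xi^0_\gamma\to 0$, since the solution develops a sharp transition layer where $\|\Ng'\|_\infty$ blows up and direct pointwise estimates on $\Ng$ near $\xi^0_\gamma$ are unavailable; the argument crucially converts the qualitative concavity structure on the congested side into a quantitative chord bound incompatible with the jump of $N_{HS}$ at the origin. A related subtlety is the asymptotic $u\to 1/c$ on the free zone needed for item~6, which requires a careful analysis of the Riccati equation in the degenerate regime $\Pg\to 0$.
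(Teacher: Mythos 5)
Your overall architecture tracks the paper's closely (phase portrait, flux $J_\gamma$, concavity/convexity across $\xi^0_\gamma$), and items 1--5 are essentially sound, with a few pleasant variations: the bound $\sup_{\xi<0}|\Ng'|\leq c$ obtained directly from $\Ng'=(J_\gamma-c\Ng)/(\gamma\Pg)$ and $\gamma\Pg\geq 1$ on $\R_-$ is more direct than the paper's route through $\Pg'(0)$; for $\xi^0_\gamma\to 0$ the paper passes to the weak limit in $\Ng''\leq 0$ and contradicts $N_{HS}''>0$, whereas your chord argument combined with Helly is a valid elementary alternative; and your Riccati analysis of $u=-\Ng'/\Ng$ for item~4 reproduces the paper's conclusion, which the paper gets more quickly by dropping the term $\gamma\Ng^\gamma\Ng''\geq 0$ in the ODE on $(\xi^0_\gamma,+\infty)$.

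There is, however, a genuine gap in your item~6. You deduce $\Ng(\xi^*_\gamma)\to 1-c^{-1}$ from ``the asymptotic $u\to 1/c$ on the free zone, again obtained from the Riccati equation,'' but this is circular: the uniform convergence of $L_\gamma=-u+c^{-1}$ to zero on $(\xi^*_\gamma,+\infty)$ is established in Lemma~\ref{lem:quantitative-bounds} precisely by a bootstrap \emph{initialized} at $\xi^*_\gamma$ using $L_\gamma(\xi^*_\gamma)\to 0$, which is item~6 of the present lemma. Your Riccati argument in item~4 only yields the one-sided bound $u\geq 1/(2c)$; the matching upper bound $u(\xi^*_\gamma)\leq c^{-1}+o(1)$ would need a separate argument. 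The flux identity alone cannot close the loop either: it gives $J_\gamma(\xi^*_\gamma)=c\Ng(\xi^*_\gamma)+o(1)$, but if $\xi^*_\gamma\to\xi^*>0$ along a subsequence this is consistent with $J_\gamma(\xi^*_\gamma)\to J_{HS}(\xi^*)$, hence $\Ng(\xi^*_\gamma)\to N_{HS}(\xi^*)<1-c^{-1}$, and no contradiction arises. The correct order (as in the paper) is to prove $\xi^*_\gamma\to 0$ first and only then feed this into the flux identity: if $\xi^*_\gamma\geq\delta>0$ along a subsequence, the monotonicity of $\Ng'$ on $(\xi^0_\gamma,+\infty)$ forces $\Ng'\leq -c^{-1}(1-c^{-1})$ on $(\xi^0_\gamma,\delta)$, and passing to the weak limit gives $N_{HS}'\leq -c^{-1}(1-c^{-1})$ on a nontrivial interval of $(0,\delta)$, contradicting $N_{HS}'(\xi)=-c^{-1}(1-c^{-1})e^{-\xi/c}>-c^{-1}(1-c^{-1})$ there.
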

\begin{proof}
{\it First step: Upper-bound on $\Ng^0$ and on $\xi^0_\gamma$.}

The analysis of the phase portrait entails immediately that $\|\Ng'\|_{L^\infty(\R)}=-Q_-(\Ng^0)$. As recalled above, if $Q_-(\Ng^0)$ remains bounded, then $\Ng$ converges strongly in $\mathcal C(K)$ for any compact set $K$, which is absurd since $N_{HS}$ is discontinuous. Hence $Q_-(\Ng^0)$ must blow up. Since
\[
-\frac{c}{\gamma^2 (\Ng^0)^{\gamma-1}}\leq Q_-(\Ng^0) \leq -\frac{c}{2\gamma^2 (\Ng^0)^{\gamma-1}},
\]
we deduce that $(\Ng^0)^\gamma= o(\gamma^{-2}))\ll \gamma^{-1}=\Ng(0)^\gamma$. Thus $\xi^0_\gamma>0$. 

Since the flux $J_\gamma$ is decreasing on $\R$, it follows that $J_\gamma(\xi^0_\gamma)\leq J_\gamma (0)$. Now
\be\label{eq:NgO-J}
J_\gamma(\xi^0_\gamma)= c \Ng^0 + \gamma (\Ng^0)^\gamma Q_-(\Ng^0) = \left( c + O\left(\frac{1}{\gamma}\right)\right) \Ng^0,
\ee
and
\[
J_\gamma(0)=\gamma^{-1/\gamma}\left( c + \Pg'(0)\right).
\]
Thanks to the sub- and super-solutions for $\Pg$ on $\R_-$, we know that for all $\xi<0,$
\[
\left(1 - \frac{1}{\gamma}\right) (1-e^{\lambda\xi})\leq \Pg - \Pg(0)\leq \left(1 - \frac{1}{\gamma}\right) (1-e^{\xi}),
\]
where $\lambda=(\sqrt{c^2+4} - c)/2$. Hence $\Pg'(0) \in [-(1-\gamma^{-1}), (1-\gamma^{-1})\lambda]$. We deduce that $J(0)\leq c - (1-\gamma^{-1})\lambda$, and therefore
\be\label{bound-Ng0}
\Ng^0 \leq \left( c + O\left(\frac{1}{\gamma}\right)\right)^{-1} \left( c - \left(1-\frac{1}{\gamma}\right)\lambda\right)\leq 1-\frac{\lambda}{c} + O\left(\frac{1}{\gamma}\right).
\ee
Hence $\limsup_{\gamma\to \infty} \Ng^0 \leq (c-\lambda)/c<1$.

The bound on $\Pg'(0)$ also implies the boundedness of $\Ng'$ on $\R_-$. Indeed, since $\xi^0_\gamma>0$, $\Ng'$ is decreasing and negative on $\R_-$, and
\[
\sup_{\xi<0}|\Ng'(\xi)|= |\Ng'(0)| = -\frac{\Pg'(0) }{\gamma \Ng(0)^{\gamma-1}}= O(1).
\]

Let us now address the upper-bound on $\xi^0_\gamma$. We recall that $\Ng$ is concave on $(-\infty, \xi^0_\gamma)$. Consequently, for all $\xi \in (0, \xi^0_\gamma)$, 
\[
0\leq \Ng(\xi) \leq \Ng(0) + \Ng'(0) \xi.
\]
In particular, taking $\xi=\xi^0_\gamma$, we deduce that
\[
\xi^0_\gamma \leq -\frac{\Ng(0)}{\Ng'(0)}= - \frac{\gamma \Pg(0)}{\Pg'(0)}= - \frac{1}{\Pg'(0)},
\]
since $\Pg(0)= \gamma^{-1}$ by choice of our normalization.
We deduce in particular that 
\[
0\leq \xi^0_\gamma \leq \frac{1}{\lambda(1-\gamma^{-1})}.
\]

\medskip

\noindent{\it Second step: Super-solution for $\Ng$ on $(\xi^0_\gamma, +\infty)$.}

We recall that $\Ng$ is convex on $(\xi^0_\gamma, +\infty)$. As a consequence, using the equation on $\Ng$, we have
\begin{equation}\label{conv-free-zone}
    -c \Ng'= \Ng (1-\Ng^\gamma) + \gamma \Ng'' \Ng^{\gamma} + \gamma^2 (\Ng')^2 \Ng^{\gamma-1} \geq \Ng (1-(\Ng^0)^\gamma)\quad \forall \xi \in (\xi^0_\gamma, +\infty).
\end{equation}
The Gronwall Lemma then implies that
\be\label{in:pointwise-Ng}
\Ng(\xi) \leq \Ng^0 \exp \left( - \frac{1-(\Ng^0)^\gamma}{c}(\xi-\xi^0_\gamma)\right)\quad \forall \xi\geq \xi^0_\gamma.
\ee
Recalling \eqref{bound-Ng0}, we deduce that for $\gamma$ large enough, for all $\xi\in (\xi^0_\gamma, +\infty)$,
\be\label{super-sol-Ng}
\Ng(\xi) \leq \Ng^0 \exp \left( - \frac{1}{2c}(\xi-\xi^0_\gamma)\right).
\ee

\noindent{\it Third step: Strong convergence of $\Pg'$ and $\Ng$.}

We start with an $L^2$ bound for $\Pg'$.
From \eqref{eq:Ng}, $\Pg$ is solution to
\begin{equation}\label{eq:Pg}
-\cg \Pg' - \gamma \Pg \Pg'' - (\Pg')^2 = \gamma \Pg(1-\Pg).
\end{equation}
Integrating Equation~\eqref{eq:Pg} over $\R$ gives
\[
(\gamma-1) \int_{\R} |\Pg'(\xi)|^2 d\xi  = -\cg + \gamma \int_{\R} \Pg(1-\Pg) \ d \xi.
\]
Hence, we get the following inequality
\begin{align*}
\|\Pg'\|_{L^2(\R)}^2 \leq \dfrac{\gamma}{\gamma-1}\left(\|\Pg\|_{L^1(\R_+)} + \|1-\Pg\|_{L^1(\R_-)} \right).
\end{align*}
The right-hand side is uniformly bounded with respect to $\gamma$ thanks to sub-solution for $\Pg$ on $\R_-$ (see Proposition \ref{prop:qualitative-Pg-Ng}) and to the upper-bound for $\Ng$ on $(\xi^0_\gamma, + \infty)$ (see \eqref{super-sol-Ng}). On the interval $(0,\xi^0_\gamma) $, we simply use the fact that $\xi^0_\gamma$ is bounded and $\Pg \leq \Pg(0)$.
Hence, $(\Pg')_{\gamma>1}$ is bounded in $L^2(\R)$.\\

We now show an additional strong convergence of $(\Pg')_\gamma$ in $L^2$. Going back to Equation~\eqref{eq:Pg} and taking into account that $(\Pg')_{\gamma}$ is uniformly bounded in $L^{2}(\mathbb{R})$, we have for any $\psi \in \mathcal{C}^\infty_c(\R)$:
	\[
	\int_{\R} \psi \Pg \big[\Pg'' + (1-\Pg) \big]\ d \xi
	= -\dfrac{1}{\gamma} \int_{\R} \psi\big[\cg \Pg' + (\Pg')^2\big]\ d \xi
	\to 0 \quad \text{as}~ \gamma \to +\infty.
	\] 
	Hence, by integration by parts in the left-hand side:
	\[
	 \int_{\R} \psi (\Pg')^2 \ d\xi + \int_{\R} \psi' \Pg \Pg' \ d\xi + \int_{\R}\psi \Pg (1-\Pg)\ d\xi \to 0 \quad \text{as}~ \gamma \to +\infty.
	\]
	From the previous bounds, it is clear that
	\[
	\int_{\R} \psi' \Pg \Pg' \ d\xi + \int_{\R}\psi \Pg (1-\Pg) \underset{\gamma \to +\infty}{\to} 
	\int_{\R} \psi' P_{HS} \ P_{HS}' \ d\xi + \int_{\R}\psi P_{HS} (1-P_{HS}) \ d \xi
	= - \int_{\R} \psi (P')^2,
	\]
	using the complementary equation~\eqref{eq:compl-p}. 
	Finally
	\[
	\int_{\R} \psi (\Pg')^2 \ d\xi \to  \int_{\R} \psi (P_{HS}')^2 \ d\xi \quad \text{as}~ \gamma \to +\infty,
	\]
	which means that $(\Pg')_\gamma$ converges strongly in $L^2_\text{loc}(\R)$ to $P_{HS}'$.

We then recall that $J_\gamma= \Ng(c + \Pg')$. Since $(\Pg')_\gamma$ converges in $L^2_\text{loc}$, there exists a sub-sequence (which we still denote by $\Pg'$) which also converges almost everywhere. Recall that $(J_\gamma)_\gamma$ converges in $\mathcal C(K)$ for any compact set $K\subset \R$. 
Therefore $(\Ng)_\gamma$ converges almost everywhere - up to a subsequence - on any set of the form $\cap_{\gamma>0} \{c + \Pg'\geq \delta\}$, with $\delta>0$.

Let $K$ be a compact set in $\R$, and let $M=\sup_K J_{HS}< c$, $m=\inf_K J_{HS}>0$. There exists $\gamma_K>0$ such that for $\gamma \geq \gamma_K$, $J_\gamma\in [m/2, (c+M)/2]$. Since $J_\gamma\leq c+ \Pg'$, we deduce that $c + \Pg'\geq m/2$ on $K$ for $\gamma\geq \gamma_K$. Whence $(\Ng)_\gamma$ converges almost everywhere on $K$, up to a subsequence. Since $\Ng$ is bounded in $L^\infty$, Lebesgue's dominated convergence theorem implies that $(\Ng)_\gamma$ converges towards $N_{HS}$ in $L^p(K)$ for any $p\in [1, +\infty[$. Note that the limit is uniquely identified. Hence the whole sequence $(\Ng)_\gamma$ converges in $L^p_\text{loc}$. 

\noindent{\it Fourth step: Convergence of $\xi^0_\gamma$ and $\Ng^0$.}

We argue by contradiction and assume that $\limsup_{\gamma\to \infty} \xi_\gamma^0>0$. Then there exists $\bar \xi>0$ such that $\bar \xi\leq \xi_\gamma^0$ for a subsequence.
We recall that $\Ng''\leq 0$ on $(0, \bar \xi)$. Passing to the limit in the sense of distributions along this subsequence, we obtain that $N_{HS}''\leq 0$ in $\mathcal D'((0, \bar \xi))$, which is absurd. Thus $\xi_\gamma^0\to 0$ as $\gamma\to \infty$.

Let us now go back to \eqref{eq:NgO-J}. We recall that $J_\gamma$ converges uniformly towards $J_{HS}$, and that $J_{HS}(0)=c-1$. It follows that  $\lim_{\gamma\to \infty} \Ng^0= 1- c^{-1}$.


\noindent{\it Fifth step: Asymptotic behavior of $\xi^*_\gamma$ and $\Ng(\xi^*_\gamma)$.}

First, notice that $\xi^*_\gamma $ is well-defined since $\Ng'$ is  increasing on $(\xi^0_\gamma, +\infty)$, with $\Ng'(+\infty)=0$ and $\lim_{\gamma\to \infty}\Ng'(\xi^0_\gamma)= -\infty$. Furthermore, since $\Ng(\xi^*_\gamma)\leq \Ng^0$, $\limsup_{\gamma\to \infty}\Ng(\xi^*_\gamma) \leq 1- c^{-1}$.

In order to prove that $\lim_{\gamma\to \infty} \xi^*_\gamma=0$, we argue once again by contradiction and we assume that $\limsup_{\gamma \to \infty} \xi^*_\gamma>0$. Thus there exists $\delta>0$ such that $\xi^*_\gamma\geq \delta $ along a subsequence.
By monotony of $\Ng'$, we know that $\Ng'(\xi)\leq \Ng'(\xi^*_\gamma)= -c^{-1} (1- c^{-1})$ for all $\xi \in (\xi^0_\gamma, \delta) \subset (\xi^0_\gamma, \xi^*_\gamma)$. Thus, passing to the weak limit, we find that there exists a non-empty open interval included in $(0, +\infty)$ on which $N_{HS}'\leq -c^{-1} (1- c^{-1})$. This contradicts the explicit expression of $N_{HS}'$, namely $N_{HS}'=-c^{-1}(1-c^{-1})e^{-\xi/c}$; and therefore $\lim_{\gamma\to \infty} \xi^*_\gamma=0.$

The convergence of $\Ng(\xi^*_\gamma)$ towards $1-c^{-1}$ follows from the same arguments as the one of $\Ng^0$: we note that
\[
J_\gamma(\xi^*_\gamma) = c \Ng(\xi^*_\gamma) - \gamma \frac{1}{c}\left(1-\frac{1}{c}\right)  \Ng(\xi^*_\gamma)^\gamma.
\]
Since $\limsup_{\gamma\to \infty} \Ng(\xi^*_\gamma)<1$, the second term in the right-hand side converges towards zero exponentially fast. We also recall that by uniform convergence of $J_\gamma$, $J_\gamma(\xi^*_\gamma)\to J_{HS}(0)=c-1$ as $\gamma\to \infty$. Hence $\lim_{\gamma\to \infty} \Ng(\xi^*_\gamma) = 1- c^{-1}$.

\end{proof}

\subsection{Quantitative bounds for the profiles $\Ng$ }

In order to prove our quantitative stability result in Theorem~\ref{thm:stab-Ng}, we will need some quantitative information on the asymptotic behavior of $\Ng$ and its derivatives (e.g., the size of $\| \Ng'\|_{L^{\infty}}$). 
This subsection is devoted to the proof of such bounds. 
More precisely, we prove the following result:

\begin{lem}
    \label{lem:quantitative-bounds}
    There exists a constant $C>1$, depending only on $c$, such that the following properties hold, for any $\gamma>0$:
       \[\ba
\sup_{0<|h|\leq 1}\sup_{x\in \R} \frac{1}{|h|} \frac{|\Ng (x+ h) - \Ng(x)|}{\Ng(x)} + \left\|\frac{\Ng'}{\Ng}\right\|_\infty \leq C^\gamma,\\
\ \sup_{0<|h|\leq 1}\sup_{x\in \R} \frac{1}{|h|} \left|\frac{\Ng (x+ h) - \Ng(x)}{\Ng'(x)}\right| \leq C^\gamma,\
\sup_{\xi<0}\left| \frac{1-\Pg}{\Pg'}\right| \leq C.
\ea\]

\end{lem}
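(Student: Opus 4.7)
The plan is to establish the three inequalities separately, relying on the zonal decomposition $\R=(-\infty,\xi_\gamma^-)\cup[\xi_\gamma^-,\tilde\xi_\gamma]\cup(\tilde\xi_\gamma,+\infty)$ and the quantitative information gathered in Theorem~\ref{thm:prop-Ng}, Proposition~\ref{prop:qualitative-Pg-Ng}, and Lemma~\ref{lem:transition}. Throughout, I use freely that $\Ng$ is monotone decreasing, that $\Pg = \Ng^\gamma$, and that $\Pg'=\gamma \Ng^{\gamma-1}\Ng'$, $|\Pg'|\le c$.

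First I would handle $\|\Ng'/\Ng\|_\infty$. In the free zone ($\xi>\tilde\xi_\gamma$), Theorem~\ref{t:gilding} gives $(\ln\Ng)'\to-1/c$, and combined with the convexity of $\Ng$ past $\xi_\gamma^0$ (Lemma~\ref{lem:transition}) this yields $|\Ng'/\Ng|=O(1)$. In the congested zone, $\Ng$ is within a constant factor of $1$ while $|\Ng'|=|\Pg'|/(\gamma\Ng^{\gamma-1})=O(1)$, so again $|\Ng'/\Ng|=O(1)$. In the intermediate zone, \eqref{eq:explo-Np} gives $|\Ng'|\le C_0^\gamma$ with $C_0=(1-\tfrac{1}{2c})^{-1}$, while the lower bound on $\Pg$ from Theorem~\ref{thm:prop-Ng} forces $\Ng\ge 1-c^{-1}-\delta>0$, so the ratio is at most $C^\gamma$ for $C>C_0$.

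For bound (1), the mean value theorem rewrites the difference quotient as $|\Ng'(\theta)|/\Ng(x)$ with $\theta$ between $x$ and $x+h$. When $h>0$ the monotonicity of $\Ng$ gives $\Ng(\theta)\le\Ng(x)$, and the estimate follows from the previous step. When $h<0$ one must control $\Ng(\theta)/\Ng(x)$ with $|\theta-x|\le 1$: outside an $O(1)$-neighborhood of the transition region, the explicit exponential asymptotics of $\Ng$ provided by \eqref{encadrement-N-congest} and \eqref{encadr-N-free} yield a universal bound on this ratio; inside such a neighborhood, both $\Ng(\theta)$ and $\Ng(x)$ are bounded below by a uniform positive constant (by the lower bounds of Theorem~\ref{thm:prop-Ng}), while $|\Ng'(\theta)|\le C_0^\gamma$ by \eqref{eq:explo-Np}, giving the $C^\gamma$ bound. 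Bound (2) is treated by the same mean-value reduction to $|\Ng'(\theta)|/|\Ng'(x)|$: the numerator is at most $C_0^\gamma$; for the denominator, the critical case is when $x$ lies within distance $1$ of the transition zone, and there $|\Ng'(x)|$ admits a positive lower bound depending only on $c$ from the explicit values of $\Ng'$ at $\xi_\gamma^*$ in Lemma~\ref{lem:transition} (where $\Ng'(\xi_\gamma^*)=-c^{-1}(1-c^{-1})$) combined with monotonicity of $\Ng'$; when $x$ and $\theta$ lie in the same non-transition region, the ratio is $O(1)$ by the explicit asymptotics.

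The main obstacle is the last bound. Setting $m(\xi):=-\Pg'(\xi)/(1-\Pg(\xi))$ on $\R_-$, a direct manipulation of the ODE satisfied by $\Pg$ yields, using $\gamma\Pg\ge 1$ on $\R_-$ and $-\Pg'\le c$, the Riccati-type inequality
\[
m'\ \geq\ 1 - cm - m^2\quad\text{on }\R_-.
\]
The right-hand side is non-negative precisely when $m\in[0,\lambda]$, where $\lambda=(-c+\sqrt{c^2+4})/2$. At the right endpoint, tangency of $\Pg$ with the sub- and super-solutions at $\xi=0$ (as exploited in the proof of Lemma~\ref{lem:transition}) gives $m(0^-)\in[\lambda(1-\gamma^{-1}),1]$; at the left endpoint, the expansion $\ln(1-\Pg)=\ln(1-\Ng)+\ln(1+\Ng+\cdots+\Ng^{\gamma-1})$ combined with $\Ng'(-\infty)=0$ and \eqref{cvg-Ng-minf} shows $m(\xi)\to\lambda_\Ng$ as $\xi\to-\infty$, which is close to $1$ for $\gamma$ large. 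Hence if $m$ had an interior minimum value $m_*<\lambda$, one would have $m'=0$ at the minimum while $m'\ge 1-cm_*-m_*^2>0$, a contradiction. This forces $m\ge c_0>0$ uniformly on $\R_-$, which is equivalent to (3) with $C=1/c_0$. The subtlety here is to check the sign of $1-cm-m^2$ on the full admissible range of $m$ and to justify the limits of $m$ at $-\infty$ and $0^-$; everything else reduces to careful bookkeeping of the zonal estimates already available.
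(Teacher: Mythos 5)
Your treatment of the last bound is a genuine and arguably cleaner alternative to the paper's argument. You work with $m:=-\Pg'/(1-\Pg)$ and derive the Riccati-type inequality $m'\geq 1-cm-m^2$ from $\gamma\Pg\geq 1$ and $\Pg'\leq 0$ on $\R_-$; this rules out interior local minima below $\lambda=(-c+\sqrt{c^2+4})/2$ in one stroke. The paper instead sets $M_\gamma=(1-\Pg)/\Pg'=-1/m$, evaluates the equality $M_\gamma'=-1+M_\gamma^2+cM_\gamma/(\gamma\Pg)+(1-\Pg)/(\gamma\Pg)$, and solves the quadratic at an interior critical point to get $M_\gamma(\xi_M)=-1+O(\gamma^{-1})$, combining this with the endpoint values. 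Your differential inequality gives the weaker conclusion $m\geq\lambda$ (i.e.\ $|M_\gamma|\leq 1/\lambda$) rather than $|M_\gamma|\leq 3/2$, but the Lemma only asserts a uniform upper bound, so this is sufficient and the argument is sound once the endpoint limits are established as you indicate.

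However, there is a genuine gap in your first step. You assert that in the free zone, $(\ln\Ng)'\to -1/c$ (which is a pointwise limit as $\xi\to+\infty$ at fixed $\gamma$, not a uniform-in-$\gamma$ statement) combined with the convexity of $\Ng$ past $\xi^0_\gamma$ yields $|\Ng'/\Ng|=O(1)$. This does not follow. Convexity gives $(\ln\Ng)''\geq -((\ln\Ng)')^2$, which is only a one-sided constraint: together with the limit at $+\infty$, it controls $\Ng'/\Ng$ from above (yielding $L_\gamma:=\Ng'/\Ng+c^{-1}\leq (\Ng^0)^\gamma/c$), but it does not prevent $\Ng'/\Ng$ from dipping far below $-1/c$ on a compact interval before relaxing back. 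The flux inequality only gives the far-too-weak lower bound $\Ng'/\Ng\geq -c/(\gamma\Pg)$, which is exponentially large in magnitude on the free side since $\Pg$ is exponentially small there. The paper closes this by deriving the exact first-order ODE satisfied by $L_\gamma$, identifying the exponentially large coefficient $c/(\gamma\Ng^\gamma)-2(\gamma^2+1)/c$ that provides a strong linear restoring force, and running a bootstrap from $\xi^*_\gamma$ to show that $L_\gamma\geq -1$ (hence $L_\gamma\to 0$) uniformly on $(\xi^*_\gamma,+\infty)$. Some such quantitative argument is unavoidable: note that the lower bound \eqref{encadr-N-free} in Theorem~\ref{thm:prop-Ng}, which you invoke, is itself a consequence of this bootstrap (via Remark~\ref{rmk:sub-sol_free_zone}), so citing it does not circumvent the issue.

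A smaller but real error occurs in your treatment of the second difference quotient: the claim that for $x$ within distance $1$ of the transition zone, ``$|\Ng'(x)|$ admits a positive lower bound depending only on $c$'' is false. On the congested side of the transition (say $x\in(-2,0)$), $|\Ng'(x)|=|\Pg'(x)|/(\gamma\Ng^{\gamma-1}(x))$ is of order $\gamma^{-1}$, not $O(1)$. The paper accounts for this by factoring $|\Ng(x+h)-\Ng(x)|/|h\,\Ng'(x)|$ as the first difference quotient times $\Ng(x)/|\Ng'(x)|$ and bounding the latter by $\max(1/|\Ng'(-2)|,1/|\Ng'(\xi^*_\gamma)|)\leq C\gamma$, absorbing the extra $\gamma$ into the final constant $C^\gamma$. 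Your conclusion survives, but your stated justification does not.
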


\begin{proof}

\noindent\textbf{Bound on $\Ng'/\Ng$ in the free zone $\xi>\xi^*_\gamma$.}

We set $L_\gamma:=\frac{\Ng'}{\Ng} + c^{-1}$. Using the equation and the convexity of $\Ng$ in $\xi>\xi^*_\gamma$, see \eqref{conv-free-zone}, we have
\[
-c \Ng'(\xi) \geq \Ng(\xi) (1- (\Ng(\xi))^\gamma)
\implies L_\gamma(\xi) = \frac{\Ng'(\xi)}{\Ng(\xi)} + \frac{1}{c} \leq \dfrac{(\Ng(\xi))^\gamma}{c} \leq \dfrac{(\Ng(\xi^*_\gamma))^\gamma}{c}
\quad \forall \ \xi \geq \xi^*_\gamma.
\]
Furthermore, since $\Ng'(\xi^*_\gamma)= -c^{-1} (1-c^{-1})$ and $\Ng(\xi^*_\gamma)\to 1-c^{-1} $, we immediately infer that $\Lg(\xi^*_\gamma)$ vanishes as $\gamma\to \infty$. We now derive an equation for $\Lg$ in order to obtain a lower bound on $\Lg$.
We have, using the equation on $\Ng$,
\begin{eqnarray*}
\Lg'&=&\frac{\Ng''}{\Ng} - \frac{(\Ng')^2}{\Ng^2}\\
&=&-\frac{1}{\gamma \Ng^{\gamma+1}}\left( \Ng (1-\Ng^\gamma) + c \Ng' + \gamma^2 (\Ng')^2 \Ng^{\gamma-1}\right) - \frac{(\Ng')^2}{\Ng^2}\\
&=& -\frac{c\Lg}{  \gamma \Ng^\gamma } + \frac{1}{\gamma} - (\gamma^2 +1) \left(\Lg-\frac{1}{c}\right)^2. 
\end{eqnarray*}
Thus $\Lg$ satisfies the differential equation
\[
\Lg' + \left[ (\gamma^2 + 1) \Lg  + \frac{c}{\gamma \Ng^\gamma}  -\frac{2(\gamma^2+1)}{c} \right] \Lg = \frac{1}{\gamma} - \frac{\gamma^2 + 1}{c^2}.
\]
Note that the coefficient $\frac{c}{\gamma \Ng^\gamma}  -\frac{2(\gamma^2+1)}{c}$ is exponentially large in the free zone, and  drives a strong convergence of $\Lg$ towards zero. Thus the whole idea is to prove that the quadratic term $(\gamma^2+1)\Lg^2$ does not perturb the linear behavior. This easily follows from a bootstrap argument. Indeed, note that at $\xi=\xi^*_\gamma$, for $\gamma$ large enough
\be\label{in:bootstrap-Lg}
(\gamma^2 + 1) \Lg  + \frac{c}{\gamma \Ng^\gamma}  -\frac{2(\gamma^2+1)}{c}> \frac{c}{2\gamma \Ng^\gamma}.
\ee
Thus by continuity, this property remains true on a non-empty open interval on the right of $\xi^*_\gamma$. Let
\[
\xi_\text{max}:=\sup\{\xi>\xi^*_\gamma,\ \eqref{in:bootstrap-Lg}\text{ holds  on } (\xi^*_\gamma, \xi)\}.
\]
Then $\xi_\text{max}>\xi^*_\gamma$, and on the interval $(\xi^*_\gamma, \xi_\text{max})$, we have
\[
\Lg' + \frac{c}{2\gamma \Ng^\gamma} \Lg \geq \frac{1}{\gamma} - \frac{\gamma^2 + 1}{c^2}\geq -\frac{\gamma^2 + 1}{c^2}.
\]
The Gronwall Lemma then implies that for all $\xi\in (\xi^*_\gamma, \xi_\text{max})$,
\[
\Lg(\xi)\geq \Lg(\xi^*_\gamma)\exp\left(-\int_{\xi^*_\gamma}^\xi \frac{c}{2\gamma \Ng^\gamma}\right) - \frac{\gamma^2 + 1}{c^2}\int_{\xi^*_\gamma}^\xi \exp\left(-\int_{\xi'}^\xi \frac{c}{2\gamma \Ng^\gamma}\right) d\xi'.
\]
Now, we recall that for $\xi>\xi^*_\gamma$, for $\gamma$ sufficiently large,
\[
\Ng(\xi)\leq \Ng(\xi^*_\gamma)\leq 1 - \frac{1}{2c}.
\]
Thus for all $\xi\in (\xi^*_\gamma, \xi_\text{max})$,
\begin{eqnarray*}
\Lg(\xi) &\geq & -|\Lg(\xi^*)| \exp\left(-(\xi-\xi^*_\gamma) \frac{c}{2\gamma}\left(1-\frac{1}{2c}\right)^{-\gamma}\right) \\
&&- \frac{\gamma^2+1}{c}\int_{\xi^*_\gamma}^\xi \exp\left(-(\xi-\xi') \frac{c}{2\gamma}\left(1-\frac{1}{2c}\right)^{-\gamma}\right)d\xi'\\
&\geq &  -|\Lg(\xi^*)|  \exp\left(-(\xi-\xi^*_\gamma) \frac{c}{2\gamma}\left(1-\frac{1}{2c}\right)^{-\gamma}\right) - \frac{2\gamma(\gamma^2 +1)}{c^2}\left(1-\frac{1}{2c}\right)^\gamma.
\end{eqnarray*}
Note that the right-hand side of the above inequality converges uniformly towards zero. In particular, for $\gamma$ sufficiently large, $\Lg(\xi)\geq -1$  for all $\xi\in (\xi^*_\gamma, \xi_\text{max})$. It follows that
\[
(\gamma^2 + 1) \Lg  + \frac{c}{\gamma \Ng^\gamma}  -\frac{2(\gamma^2+1)}{c}\geq \frac{c}{\gamma \Ng^\gamma} - \frac{(c+2)(\gamma^2+1)}{c} \geq \frac{3c}{4\gamma \Ng^\gamma}\quad \forall \xi \in (\xi^*_\gamma, \xi_\text{max}).
\]
By a bootstrap argument, we deduce that $\xi_\text{max}=+\infty$. This implies, in particular, that $\Lg\to 0$ uniformly on $(\xi^*_\gamma, +\infty)$.

\begin{rmk}
The uniform convergence of $\Lg$ towards zero yields the existence of sub-solutions of $\Ng$ in the zone $\xi>\xi^*_\gamma$. 
Indeed, let $\delta>0$ be arbitrary. Then for $\gamma$ large enough, $\Lg\geq - \delta$, and therefore $\frac{\Ng'}{\Ng}\geq -(c^{-1}+\delta)$. By the Gronwall Lemma, we obtain
\be\label{sub-sol}
\Ng(\xi)\geq \Ng(\xi^*_\gamma) \exp\left(-\left(\frac{1}{c} + \delta\right) (\xi-\xi^*_\gamma)\right).
\ee
\label{rmk:sub-sol_free_zone}
\end{rmk}

\noindent\textbf{Bound on $\Ng'/\Ng$ and on the first difference quotient in $L^\infty$.}

We distinguish between $\xi<\xi^*_\gamma$ and $\xi>\xi^*_\gamma$ and we write, for $\gamma$ sufficiently large,
\begin{eqnarray*}
\left\| \frac{\Ng'}{\Ng}\right\|_\infty&=&\max\left(\sup_{\xi<\xi^*_{\gamma}} \frac{|\Ng'|}{\Ng}, \sup_{\xi>\xi^*_\gamma} \left| \Lg - \frac{1}{c}\right| \right)\\
&\leq & \max \left( \frac{1}{\Ng(\xi^*_\gamma)}\|\Ng'\|_\infty, \frac{1}{c}+1\right)\\
&\leq & C | Q_-(\Ng^0)|\leq \left(1-\frac{1}{2c}\right)^{-\gamma}.
\end{eqnarray*}

Let us now consider the difference quotient
\[
\frac{1}{|h|}\frac{|\Ng(x+h)- \Ng(x)|}{\Ng(x)}.
\]
We will need to distinguish several cases:
\begin{itemize}
    \item Case $x<\xi^*_\gamma$: in that case, $\Ng(x)\geq \Ng(\xi^*_\gamma)\to 1-c^{-1}$, and therefore the difference quotient is bounded by $C\|\Ng'\|_\infty$.
    
    \item Case $x>\xi^*_\gamma$:
    \begin{itemize}
        \item Sub-case $h>0$: we write
$\Ng(x+h)-\Ng(x)=\int_0^h \Ng'(x+y)\:dy$, and we recall that since $\Lg$ is uniformly bounded, $|\Ng'|\leq C \Ng$ for some constant $C$ in $(\xi^*_\gamma, +\infty)$. Using the monotony of $\Ng$, we deduce that the difference quotient is bounded.

\item Sub-case $h<0$ and  $x+h>\xi^*_\gamma$: an argument similar to the sub-case $h>0$ applies. In that case, we obtain, using a variant of Remark \ref{rmk:sub-sol_free_zone},
\[
\frac{1}{|h|}\frac{|\Ng(x+h)- \Ng(x)|}{\Ng(x)}\leq C \frac{\Ng(x+h)}{\Ng(x)} \leq C.
\]

\item Sub-case $x+h\leq \xi^*_\gamma$: in that case, note that $x=x+h-h \leq \xi^*_\gamma + 1$ since $|h|\leq 1$. Hence $\Ng(x)\geq \Ng(\xi^*_\gamma+1)$, which is uniformly bounded from below thanks to \eqref{sub-sol}. Thus the difference quotient is bounded by $C \|\Ng'\|_\infty$.
        
    \end{itemize}
    
\end{itemize}
Gathering these results, we obtain the bounds announced in the Lemma.

\medskip

\noindent{\bf Bound on $(1-\Pg)/\Pg'$ on $\R_-$.}

Let $M_\gamma:=(1-\Pg)/\Pg'$. According to Proposition \ref{prop:qualitative-Pg-Ng}, $M_\gamma\to (1-P_{HS})/P_{HS}'= -1$ locally uniformly on $\R_-$. So, for $\gamma$ sufficiently large, $M_\gamma(\xi) \in [-3/2, -1/2]$ for all $\xi \in [-1,0]$.
Furthermore we know that
\[
\dfrac{\Ng'}{\Pg'}(\xi) = \dfrac{1}{\gamma (\Ng(\xi))^{\gamma-1}}  \to \dfrac{1}{\gamma}, \quad 
\lim_{\xi\to - \infty}\dfrac{1-\Pg(\xi)}{1-\Ng(\xi)} = \lim_{N\to 1^-} \frac{1-N^\gamma}{1-N} = \gamma ,
\]
so that, thanks to Theorem \ref{t:gilding},
\[
M_\gamma (\xi)= \frac{\Ng'(\xi)}{\Pg'(\xi)} \frac{1-\Ng(\xi)}{\Ng'(\xi)}\frac{1-\Pg(\xi)}{1-\Ng(\xi)} \to -\left(\sqrt{1 + \frac{c^2}{4\gamma^2}} - \frac{c}{2\gamma}\right)^{-1}, \quad \text{as} ~\xi \to -\infty.
\]
Now, let us consider the interval $(-\infty, -1]$. There are two possibilities:
\begin{itemize}
    \item either $M_\gamma(\xi)\in [M_\gamma(-1), M_\gamma(-\infty)]$ for all $\xi \in (-\infty, -1]$. In that case, for $\gamma$ sufficiently large, $M_\gamma(\xi) \in [-3/2, -1/2]$ for all $\xi \in(-\infty, -1]$;
    
    \item or $M_\gamma$ takes values outside the interval  $[M_\gamma(-1),M_\gamma(-\infty) ]$. In that case $M_\gamma$ reaches a local extremum at some $\xi_M\in (-\infty, -1)$, and therefore $M_\gamma'(\xi_M)=0$.
    
    Let us compute $M_\gamma'$. Using the equation satisfied by $\Pg$~\eqref{eq:Pg}, we have
   \begin{eqnarray*}
   M_\gamma'&=& -1 - \frac{\Pg'' (1-\Pg)}{(\Pg')^2}\\
   &=&-1 + \frac{1-\Pg}{(\Pg')^2}\left( 1-\Pg + \frac{c\Pg'}{\gamma \Pg} + \frac{(\Pg')^2}{\gamma \Pg}\right)\\
   &=&-1 + M_\gamma^2 + c \frac{M_\gamma}{\gamma \Pg} + \frac{1-\Pg}{\gamma \Pg}.
    \end{eqnarray*}
    At $\xi=\xi_M$, the right-hand side vanishes, and therefore
    \[
    M_\gamma(\xi_M)=\frac{1}{2}\left(-\frac{c}{\gamma \Pg(\xi_M)} \pm \sqrt{4 + \frac{c^2}{\gamma^2 \Pg(\xi_M)^2} - 4 \frac{1-\Pg(\xi_M)}{\gamma \Pg(\xi_M)}}\right).
    \]
    Note that, thanks to~\eqref{eq:encadr_P_R-}, $\Pg(\xi_M)\geq \Pg(-1)\geq 1 - e^{-\lambda} >0$. Hence $M_\gamma(\xi_M)= \pm 1 + O(\gamma^{-1})$. Recalling that $M_\gamma<0$ on $\R_-$, we deduce that $M_\gamma(\xi_M)=-1 + O(\gamma^{-1})$.
    
    Once again, for $\gamma$ sufficiently large, we find that $M_\gamma(\xi) \in [-3/2, -1/2]$ for all $\xi \in(-\infty, -1]$.

\end{itemize}

Hence in all cases, we deduce that for $\gamma $ sufficiently large,
\be\label{P'/1-P}
-\frac{3}{2}\leq \frac{1-\Pg}{\Pg'}\leq -\frac{1}{2}\quad \forall \xi \in \R_-. 
\ee

Note that these bounds (which are stronger than what is announced in the statement of the Lemma) imply in particular the following inequalities, which are easy consequences of the Gronwall Lemma: for all $\xi\leq \xi'\leq 0$, for $\gamma$ large enough,
\be\label{pointwise-Pg}
(1-\Pg(\xi))\exp\left(-2(\xi'-\xi)\right)\leq 1-\Pg(\xi') \leq (1-\Pg(\xi)) \exp\left(-\frac{2}{3}(\xi'-\xi)\right).
\ee

\medskip

\noindent{\bf Bound on the second difference quotient.}

We now address the bound on
\[
\sup_{0<|h|\leq 1}\sup_{x\in \R} \frac{1}{|h|} \left|\frac{\Ng (x+ h) - \Ng(x)}{\Ng'(x)}\right|.
\]
Once again, we will need to distinguish between several zones. First, note that
\[
\frac{1}{|h|} \left|\frac{\Ng (x+ h) - \Ng(x)}{\Ng'(x)}\right| = \frac{1}{|h|} \left|\frac{\Ng (x+ h) - \Ng(x)}{\Ng(x)}\right| \; \left| \frac{\Ng(x)}{\Ng'(x)}\right| .
\]
Hence for $x>-2$, this difference quotient is bounded by
\[
\sup_{x\in \R} \sup_{0<|h|\leq 1}\frac{1}{|h|} \left|\frac{\Ng (x+ h) - \Ng(x)}{\Ng(x)}\right| \ \sup_{x>-2}\frac{\Ng(x)}{|\Ng'(x)|}.
\]
For $x>\xi^*_\gamma$, $\Ng/\Ng'= (\Lg- c^{-1})^{-1}$, and we recall that $\Lg$ converges uniformly towards zero on $(\xi^*_\gamma, +\infty)$. Hence $\Ng/\Ng'$ is uniformly bounded on $(\xi^*_\gamma, +\infty)$. 
And looking at the variations of $\Ng'$, we infer that
\[
\sup_{x\in (-2, \xi^*_\gamma)}\frac{\Ng(x)}{|\Ng'(x)|}\leq \max \left(\frac{1}{|\Ng'(-2)|}, \frac{1}{|\Ng'(\xi^*_\gamma)|}\right)\leq C\gamma.
\]
Thus
\[
\sup_{0<|h|\leq 1}\sup_{x\in (-2, +\infty)} \frac{1}{|h|} \left|\frac{\Ng (x+ h) - \Ng(x)}{\Ng'(x)}\right|\leq \gamma C^\gamma \leq C_1^\gamma,
\]
for some constant $C_1>C.$

We now consider the interval $(-\infty, -2)$. Since $|h|\leq 1$, we have $x+h\leq -1$. Hence $x$ and $x+h$ are in the congested zone.
We write
\[
\frac{1}{h}\frac{\Ng(x+h)-\Ng(x)}{\Ng'(x)}=\int_0^1\frac{\Ng'(x+\tau h)}{\Ng'(x)}\:d\tau.
\]
Recall that $\Ng'=\gamma^{-1} \Pg'\Ng^{-(\gamma-1)}.$ Hence
\[
\frac{\Ng'(x+\tau h)}{\Ng'(x)}= \frac{\Pg'(x+\tau h)}{\Pg'(x)}\; \frac{\Ng(x)^{\gamma-1}}{\Ng(x+h)^{\gamma-1}}.
\]
Note that $\Ng^{\gamma-1}=\Pg/\Ng$ is uniformly bounded from above and from below on $(-\infty, -1)$. Thus we focus on the quotient $\Pg'(x+\tau h)/\Pg'(x)$, which we further decompose as
\[
\frac{\Pg'(x+\tau h)}{\Pg'(x)}= \frac{\Pg'(x+\tau h)}{1-\Pg(x+\tau h)}\; \frac{1-\Pg(x+\tau h)}{1-\Pg(x)}\; \frac{1-\Pg(x)}{\Pg'(x)}= \frac{M_\gamma(x)}{M_\gamma (x+\tau h)} \; \frac{1-\Pg(x+\tau h)}{1-\Pg(x)}.
\]
Using \eqref{pointwise-Pg} and \eqref{P'/1-P}, we deduce that
\[
\left| \frac{\Pg'(x+\tau h)}{\Pg'(x)}\right| \leq C e^{2|h|}.
\]
Hence
\[
\sup_{0<|h|\leq 1}\sup_{x\leq -2} \frac{1}{|h|} \left|\frac{\Ng (x+ h) - \Ng(x)}{\Ng'(x)}\right|\leq C.
\]


\end{proof}

Our nonlinear stability result will hold in weighted Sobolev spaces. 
The weights will depend on the function $\Ng$ and its derivative, and therefore will have abrupt changes in the transition zone $(0, \xi^*_\gamma)$. 
In order to monitor precisely these changes, we introduce two additional abscissa $\xim$ and $\txg$, which we define as follows:

\begin{df}[Definition of $\xim$ and $\txg$]~~  
\label{def:xim-txg}
\begin{itemize}
    \item The abscissa $\xim\in \R$ is the unique point where 
    \begin{equation}
     \Pg(\xim)= \left(\dfrac{c^3}{(c-1)(\gamma+1)}\right)^{1/2}.   
    \end{equation}
    
    \item The abscissa $\txg\in \R$ is the unique point such that $\Ng(\txg)\in (0, \Ng^0)$ and
    \[
    \Ng'(\txg)= -\frac{c-1}{4\gamma^2 \Ng(\txg)^{\gamma-1}}.
    \]
    
\end{itemize}

\end{df}

\begin{rmk}
\begin{itemize}
    \item Note that $\xim$ is well-defined by monotony of $\Pg$, and $\xim<0$ since $\Pg(\xim)>\Pg(0)$;
    
    \item The definition of $\txg$ is a little more intricate. We recall that for all $\Ng\in (0, \Ng^0)$, $Q_-(N)<\Ng'< 0$, where $Q_-$ is defined in \eqref{def:Q_pm} and $d\Ng'/d\Ng\leq 0$ for all $\Ng\in (0, \Ng^0)$; we refer to the analysis of the phase portrait in the previous subsection. 
    
    Now, define $\tilde Q(N)$ by
    \[
    \tilde Q(N):=-\frac{c-1}{4 \gamma^2 N^{\gamma-1}}.
    \]
    It is clear from the definition of $\tilde Q$ and $Q_-$ that $Q_-<\tilde Q$ for all $N\in (0, \Ng^0)$, and $\tilde Q$ is monotone increasing on that interval. 
    Consequently, the curve $(\Ng, \Ng')$ intersects the curve $(N, \tilde Q(N))$ exactly once on the interval $(0, \Ng^0)$ (see Figure~\ref{fig:phaseplane-2}). 
    We denote the abscissa of the intersection point as $\tilde \Ng$, and $\txg$ is defined implicitly as $\Ng(\txg)=\tilde \Ng$. 
    \begin{figure}[ht]
\begin{center}
\includegraphics[scale=0.45]{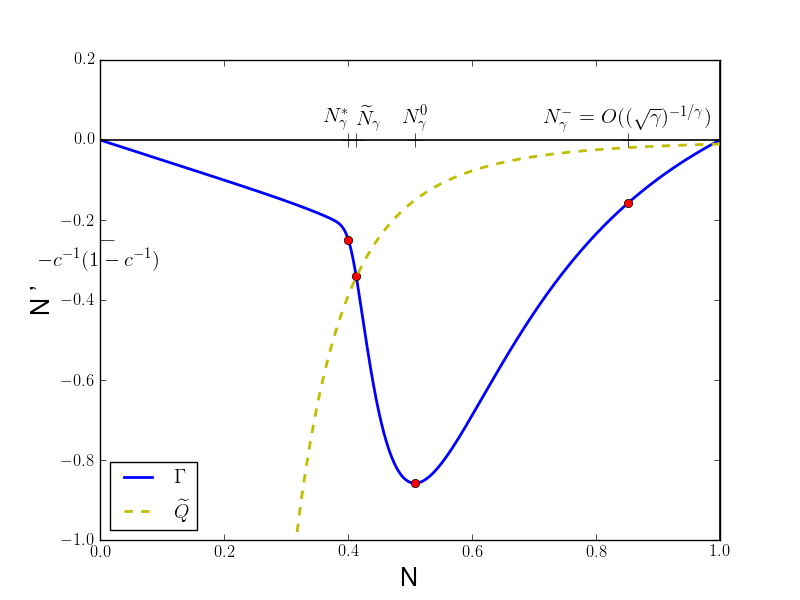}
\end{center}
\caption{{\label{fig:phaseplane-2}Definition of the point $\widetilde{N}_\gamma$ in the phase plane $(N,N')$, $c=2$, $\gamma=5$.} 
}
\end{figure}
    
\end{itemize}

\end{rmk}

Let us now give some properties of $\txg $ and $\xim$, which will be used in the next section:

\begin{lem}[Properties of $\txg $ and $\xim$]~
For $\gamma$ large enough, the following properties hold:
    \begin{itemize}
    \item $\xi^-_\gamma < 0 < \xi^0_\gamma<\txg<\xi^*_\gamma$. As a consequence, $\lim_{\gamma\to \infty} \Ng(\txg)= 1-c^{-1}$;
    
    \item $\xim=O(\gamma^{-1/2})$, and $\txg=O(\gamma^{-1})$;
    
    \item $\Pg'\leq -C \gamma^{-1}$ for all $\xi \in (\xim, \txg)$.
    
    \end{itemize}
    \label{lem:txg-xim}
\end{lem}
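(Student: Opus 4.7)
I would treat the three items in order, using only the phase portrait from Section~\ref{ssec:phase-portrait}, the pointwise bounds on $\Pg$ from Proposition~\ref{prop:qualitative-Pg-Ng}, and the control on $M_\gamma=(1-\Pg)/\Pg'$ from Lemma~\ref{lem:quantitative-bounds}.

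\textbf{Ordering.} The inequality $\xim<0$ is immediate from the monotonicity of $\Pg$ together with $\Pg(\xim)\sim\gamma^{-1/2}\gg 1/\gamma=\Pg(0)$, and $\xi^0_\gamma>0$ was already established in Lemma~\ref{lem:transition}. Similarly, $\xi^0_\gamma<\txg$ follows from the defining inequality $\Ng(\txg)<\Ng^0=\Ng(\xi^0_\gamma)$ and monotonicity of $\Ng$. For $\txg<\xi^*_\gamma$, the crucial point is that the bound \eqref{bound-Ng0} gives $\Ng(\txg)\leq \Ng^0\leq 1-\lambda/c+O(1/\gamma)$, so $\Ng(\txg)^{\gamma-1}$ is exponentially small and hence $|\Ng'(\txg)|=(c-1)/(4\gamma^2\Ng(\txg)^{\gamma-1})$ is exponentially large, vastly exceeding $|\Ng'(\xi^*_\gamma)|=(c-1)/c^2$; since $|\Ng'|$ is decreasing past $\xi^0_\gamma$, we conclude $\txg<\xi^*_\gamma$. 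The consequence $\Ng(\txg)\to 1-c^{-1}$ follows by sandwich ($\xi^0_\gamma,\xi^*_\gamma\to 0$ by Lemma~\ref{lem:transition} forces $\txg\to 0$), applied to the explicit identity $J_\gamma(\txg)=\Ng(\txg)(c-(c-1)/(4\gamma))$ obtained from the defining relation for $\txg$, combined with uniform convergence of $J_\gamma$ to $J_{HS}(0)=c-1$.

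\textbf{Quantitative bounds.} The bound $\xim=O(\gamma^{-1/2})$ is an immediate calculation: substituting $\xi=\xim$ into $\Pg(\xi)\geq 1-(1-1/\gamma)e^{\lambda\xi}$ gives $(1-1/\gamma)e^{\lambda\xim}\geq 1-O(\gamma^{-1/2})$, whence $|\xim|\leq-\lambda^{-1}\log(1-O(\gamma^{-1/2}))=O(\gamma^{-1/2})$. The bound $\txg=O(\gamma^{-1})$ is more delicate and is handled in two stages. Using the phase portrait, on $(\xi^0_\gamma,\txg)$ the change $\Ng^0-\Ng(\txg)$ is controlled by the ratio $|\tilde Q(\Ng(\txg))|/|Q_-(\Ng^0)|\leq (c-1)/(4c)$, while $|\Ng'|$ remains exponentially large throughout; dividing, $\txg-\xi^0_\gamma$ is exponentially small. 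So it suffices to show $\xi^0_\gamma=O(\gamma^{-1})$. For this, let $X_\gamma$ be defined by $\Pg(X_\gamma)=1/(2\gamma)$. On $(0,X_\gamma)$, $\Ng=\Pg^{1/\gamma}$ is uniformly close to $1$, while $J_\gamma\to J_{HS}(0)=c-1$ on compacts near $0$, so $|\Pg'|=c-J_\gamma/\Ng$ is uniformly close to $1$; in particular $|\Pg'|\geq 1/2$ for $\gamma$ large, giving $X_\gamma\leq 2(\Pg(0)-\Pg(X_\gamma))=1/\gamma$. For the remaining piece $(X_\gamma,\xi^0_\gamma)$, one exploits that $|\Ng'|$ grows rapidly along the trajectory $\Gamma$ while the drop $\Pg(X_\gamma)-\Pg(\xi^0_\gamma)=1/(2\gamma)-(\Ng^0)^\gamma$ is still $O(1/\gamma)$; a change of variables $d\xi=\gamma\Ng^{\gamma-1}d\Ng/\Pg'$ together with the lower bound on $|\Ng'|$ coming from the phase plane yields $\xi^0_\gamma-X_\gamma=O(1/\gamma)$.

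\textbf{Lower bound on $|\Pg'|$.} On $(\xim,0)$, Lemma~\ref{lem:quantitative-bounds} gives $|M_\gamma|\leq 3/2$, and since $|\xim|=O(\gamma^{-1/2})$, the pointwise bound $1-\Pg\geq(1-1/\gamma)e^\xi\geq 1/2$ holds for $\gamma$ large; hence $|\Pg'|=(1-\Pg)/|M_\gamma|\geq 1/3\gg C/\gamma$. On $(0,\txg)$, the definition of $\txg$ gives directly $\Pg'(\txg)=\gamma\Ng(\txg)^{\gamma-1}\Ng'(\txg)=-(c-1)/(4\gamma)$, while at $\xi=0$ one has $|\Pg'(0)|\to 1$, and the intermediate behavior is controlled by studying the sign of $\Pg''$ via the ODE \eqref{eq:Pg}, showing $|\Pg'|$ is minimized at the endpoint $\xi=\txg$. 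Thus $|\Pg'|\geq(c-1)/(4\gamma)$ on $(0,\txg)$.

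\textbf{Main obstacle.} The hardest step is the sharp bound $\xi^0_\gamma=O(\gamma^{-1})$: concavity-based arguments only give $O(1)$, and one must exploit the uniform closeness of $\Ng$ to $1$ \emph{precisely} on the region where $\Pg$ is comparable to $1/\gamma$, in order to upgrade $\Pg'(0)\to -1$ to a genuine lower bound on $|\Pg'|$ across a substantial subinterval.
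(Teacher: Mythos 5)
Your treatment of the ordering $\xim<0<\xi^0_\gamma<\txg<\xi^*_\gamma$ and of $\xim=O(\gamma^{-1/2})$ is essentially sound; for the latter you substitute into the sub-solution bound on $\Pg$ rather than using $|\Pg'|\gtrsim 1$ near $\xi=0$ as the paper does, but both routes work. The difficulties start with the other two items.

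For $\txg=O(\gamma^{-1})$, there are two genuine gaps. First, your claim that on $(\xi^0_\gamma,\txg)$ the change $\Ng^0-\Ng(\txg)$ is controlled by the ratio $|\tilde Q(\Ng(\txg))|/|Q_-(\Ng^0)|\leq (c-1)/(4c)$ is not justified and, since $Q_-$ is decreasing and $\Ng(\txg)<\Ng^0$, the inequality actually seems to go the wrong way. Second, the step $\xi^0_\gamma-X_\gamma=O(1/\gamma)$ via the change of variables $d\xi = d\Ng/\Ng'$ is not established: on $(X_\gamma,\xi^0_\gamma)$ the $N$-range has size $\approx 1/c=O(1)$, and near $X_\gamma$ one only gets $|\Ng'|\approx \gamma\Pg(X_\gamma)^{-1}\cdot\gamma^{-1}\cdot|\Pg'(X_\gamma)|\approx 2$, so a naive lower bound on $|\Ng'|$ gives $O(1)$, not $O(1/\gamma)$. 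Showing the integral is $O(1/\gamma)$ would require quantifying the rapid growth of $|\Ng'|$, which you do not do. The paper avoids this by working with $\Pg$ throughout: it introduces $\xiint$ with $\Ng(\xiint)=1-(2c)^{-1}$ (so $\Pg(\xiint)$ is exponentially small), shows $|\Pg'|\geq C>0$ on $(\xim,\xiint)$ via $\Pg'=J_\gamma/\Ng-c$ and uniform convergence of $J_\gamma$, deduces $\xiint\leq C/\gamma$ since $\Pg$ only drops by $O(1/\gamma)$ there, and then shows $\txg-\xiint$ is exponentially small because $|\Pg'|\geq C/\gamma$ there while the remaining $\Pg$-drop is exponentially small.

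For the lower bound on $|\Pg'|$ on $(0,\txg)$, your argument hinges on "$|\Pg'|$ is minimized at the endpoint $\xi=\txg$", which you attribute to studying the sign of $\Pg''$ but do not prove; indeed $\Pg''(0)\approx c-2-(1-\Pg(0))$ changes sign depending on $c$, and on $(0,\xi^0_\gamma)$ the two factors in $\Pg'=\gamma\Ng^{\gamma-1}\Ng'$ have competing monotonicities, so the unimodality of $|\Pg'|$ is not obvious. The paper sidesteps this entirely by going back to the phase plane: on $(\xiint,\xi^0_\gamma)$, $\Ng'\leq Q_-(\Ng)$ gives $\Pg'\leq -c/(2\gamma)$ pointwise, and on $(\xi^0_\gamma,\txg)$, $\Ng'\leq\tilde Q(\Ng)$ gives $\Pg'\leq-(c-1)/(4\gamma)$ pointwise, with no monotonicity argument required. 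You should adopt this pointwise route.
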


\begin{proof}
{\bf Relative positions of $\xi^0_\gamma, \txg, \xi^*_\gamma$.}

By definition of $\txg$, $\Ng(\txg)<\Ng^0$, and thus $\xi^0_\gamma<\txg$.
Furthermore, we recall that $\Ng' $ is monotone increasing on $(\xi^0_\gamma, +\infty)$, and
\[
\Ng'(\txg)=-\frac{c-1}{4 \gamma^2 \Ng(\txg)^{\gamma-1}}\leq -\frac{c-1}{4 \gamma^2 (\Ng^0)^{\gamma-1}} \to -\infty.
\]
Whence $\Ng'(\txg)<\Ng'(\xi^*_\gamma)$, and therefore $\txg<\xi^*_\gamma$.
The limit of $\Ng(\txg)$ follows from the monotony of $\Ng$ and the fact that $\lim_{\gamma\to \infty} \Ng(\xi^0_\gamma)=\lim_{\gamma\to \infty} \Ng(\xi^*_\gamma)=1-c^{-1} $ (see Lemma \ref{lem:transition}).

\medskip

\noindent{\bf Size of $\xim$.} 

First, considering the sub-solution for $\Pg$, we see that $\xim>-1$.
Using \eqref{P'/1-P}, we recall that  $\Pg'$ is bounded away from zero on $(-1,0)$, for $\gamma$ large enough. Thus,
\[
 \frac{|\Pg(\xim)-\Pg(0)|}{\sup_{(\xim, 0)}|\Pg'|}\leq |\xim|\leq \frac{|\Pg(\xim)-\Pg(0)|}{\inf_{(\xim, 0)}|\Pg'|},
\]
and thus
\[
\frac{C^{-1}}{\sqrt{\gamma}}\leq |\xim|\leq \frac{C}{\sqrt{\gamma}}.
\]

\medskip

\noindent{\bf Lower-bound for $|\Pg'|$ on $(\xim, \txg)$  and size of $\txg$.}

Let us introduce yet another intermediate point $\xiint$ such that $\Ng(\xiint)=1- (2c)^{-1}.$ We recall that $\Ng(\xi^0_\gamma)\to 1 -c^{-1}$, and therefore $\xiint\in (0, \xi^0_\gamma)$ for $\gamma$ large enough. Now, for $\xi\in (\xim, \xiint)$, we have $\Ng(\xi)\in [1-(2c)^{-1}, 1]$, and 
\[
\Pg'=\frac{J_\gamma - c\Ng}{\Ng}\leq J_\gamma - c + \frac{1}{2}.
\]
We recall that $J_\gamma(\xi) \to c-1$ uniformly on that interval. Thus $\Pg'\leq -C<0$ on $(\xim, \xiint)$ for $\gamma$ sufficiently large, for some uniform constant $C$.

In particular, since $\Pg(\xiint)= (1-(2c)^{-1})^\gamma$ is exponentially small, it follows that
\[
\xiint\leq \frac{|\Pg(\xiint) - \Pg(0)|}{\inf_{[0,\xiint]} |\Pg'|} \leq \frac{C}{\gamma}.
\]

Let us now consider the intervals $(\xiint, \xi^0_\gamma)$ and $(\xi^0_\gamma, \txg)$.
Using the notations introduced in the previous subsection, it is easily checked that $\Ng(\xiint)\leq N_1$. As a result, using the phase portrait of $\Ng$ (see Figure~\ref{fig:phaseplane-2}), $\Ng'\leq Q_-(\Ng)$ for all $\xi \in (\xiint, \xi^0_\gamma)$. In particular,
\begin{align*}
\Pg'
& = \gamma \Ng'(\xi) \big(\Ng(\xi)\big)^{\gamma-1} \\
& \leq \gamma \big(\Ng(\xi)\big)^{\gamma-1} \times \dfrac{1}{2\gamma^2 \big(\Ng(\xi)\big)^{\gamma-1}} \big(-c -\sqrt{c^2 -4\gamma^2 \big(\Ng(\xi)\big)^{\gamma}(1-\big(\Ng(\xi)\big)^{\gamma})} \big) \\
& \leq -\frac{c}{2\gamma} \qquad \forall \xi \in (\xiint, \xi^0_\gamma).
\end{align*}
For $\xi \in (\xi^0_\gamma, \txg)$, the argument is similar. On this interval, $\Ng'\geq Q_-(\Ng)$, but $\Ng'\leq \tilde Q(\Ng)$ by definition of $\txg$. Thus
\[
\Pg'(\xi) \leq \gamma \big(\Ng(\xi)\big)^{\gamma-1} \times \left(-\dfrac{c-1}{4\gamma^2 \big(\Ng(\xi)\big)^{\gamma-1}} \right)
\leq -\frac{c-1}{4\gamma } \quad \forall \xi \in (\xi^0_\gamma, \txg).
\]
We obtain the desired lower bound on $|\Pg'|$ on $(\xiint, \txg)$. It follows that
\[
\txg - \xiint\leq \frac{|\Pg(\txg)- \Pg(\xiint)|}{\inf_{(\xiint, \txg)} |\Pg'|} \leq C \gamma \left(1-\frac{1}{2c}\right)^\gamma= o (\gamma^{-1}).
\]
Hence $\txg$ and $\xiint$ are exponentially close. The estimate on $\txg$ follows.

\end{proof}

Let us conclude this section by saying a few words about the proof of Theorem~\ref{thm:prop-Ng}.
The sizes and signs of $\xim$ and $\txg$ are given in Lemma \ref{lem:txg-xim}. Inequality \eqref{encadrement-N-congest} follows from the monotony of $\Ng$ and from the definition of $\xim$.
Let us say few words about the inequality claimed in~\eqref{encadr-P-xim}.
Actually, the reader may check that the derivation of sub- and super-solutions on $\R_-$ made in~\eqref{est:superPg}-\eqref{est:subPg} can be easily adapted to the interval $(-\infty,\xim]$, using the fact that $\Pg(\xim) = O(\gamma^{-1/2})$ and $\gamma \Pg \geq C \sqrt{\gamma}$ on $(-\infty,\xim]$.
It follows that
\[
1 - \left(1 - \Pg(\xim)\right) e^{\mu_\gamma \xi}\leq \Pg(\xi) \leq 1 - \left(1 - \Pg(\xim)\right) e^{\xi},
\]
where $\mu_\gamma$ is the positive root of $\mu^2 + \frac{C}{\sqrt{\gamma}} \mu -1=0$. It is easily checked that $\mu= 1- O (\gamma^{-1/2})$, which leads to inequality \eqref{encadr-P-xim}.

The size of $\|\Ng'\|_\infty$ in the intermediate region $(\xim, \txg)$ is an easy consequence of Lemma \ref{lem:transition}, and the bounds on the pressure in that zone follow from the monotony of $\Pg$, the definitions of $\xim$ and the asymptotic behavior of $\Ng(\txg)$ (see Lemma \ref{lem:txg-xim}).

Eventually, the lower and bounds on $\Ng$ in the free zone follow from \eqref{sub-sol}  and \eqref{super-sol-Ng} respectively.

The convergence properties for $\Ng, \Pg$ at the end of Theorem \ref{thm:prop-Ng} are a consequence of Lemma \ref{lem:transition} and Proposition \ref{prop:qualitative-Pg-Ng}.


\section{Stability of the profiles $N_\gamma$}{\label{sec:stability}}

The goal of this section is to prove that the solution of the equation
\be
\label{eq:ng}
\partial_t \ng - \gamma \partial_x \big(\ng^\gamma \partial_x \ng)\big) = \ng \big(1-\ng^\gamma\big)
\ee
associated to an initial datum that lies between two shift of the profile $\Ng$, converges (in a sense specified below) towards $\Ng$ as $t \to +\infty$.
After a presentation of the general strategy, we enter in the details of the two main steps of the demonstration: the analysis of the linearized system and, next, the control of the nonlinear contributions. To keep the presentation as seamless as possible, we have postponed the proof of some technical lemmas to the next section.

This section contains rather technical ingredients. Therefore, in order to alleviate the notation as much as possible, {\it we will systematically drop the dependency with respect to $\gamma$} in the computations and proofs: $\Ng$ will be denoted by $N$, $\ng$ will be denoted by $n$, etc.
We only keep track of this dependency in the statement of our main result.

In the whole section, for all weights and coefficients $f(t,x)$ that only depend on $\xi = x-ct$, we denote $f(t,x)= \bar f(x- ct)$.



\subsection{Overall strategy}

We define here our notion of stability and convergence towards the profile $\Ng$. We introduce a weight 
\begin{equation}\label{formula-weight}
\bar w_0(\xi) =K \Ng^\gamma(\xi) ( \Ng'(\xi))^2\exp\left(\int_{\xim}^\xi \frac{c}{\gamma \Ng(z)} dz \right),
\end{equation}
with a normalization constant $K$ chosen so that $\bar w_0(\xim)=1$. 
We will prove that for sufficiently small and decaying initial data,
	\begin{equation}
	\int_{\R} \Big|\dfrac{\ng(t,x) -\Ng(x-ct)}{\Ng'(x-ct)}\Big|^2 \bar w_0 (x-ct) \ dx \to 0 \qquad \text{as} \quad t\to +\infty.
	\end{equation}
The result is summarized in the following theorem.	
\begin{thm}\label{prop:stab}
There exists $\eta\in ]0,1[$ such that the following result holds. Let $\gamma > 1$ be fixed, sufficiently large.
Let us assume that $n^0_\gamma$ lies between two shifts of $\Ng$, {\it i.e.} there exists $h >0$ such that $n^0_\gamma(x) \in [\Ng(x+h),\Ng(x-h)]$ for all $x\in \R$.
Let $\ng$ be the solution of~\eqref{eq:pde-0} associated with $\ng^0$ and 
\[
u_\gamma(t,x) := \dfrac{\ng(t,x) - \Ng(x-ct)}{\Ng'(x-ct)}.
\]
Assume that 
\[
\int_\R |u_\gamma(0,x)|^2 \bar w_0(x) \:dx<+\infty.
\]

Then there exists a constant $c_\gamma > 0$, decreasing exponentially with $\gamma$, such that if $h\leq \eta^\gamma$, the following inequalities hold
\begin{equation}
\ba
\int_\R |u_\gamma(t,x)|^2 \bar w_0(x-ct) \ dx 
\leq e^{-c_\gamma t} \int_\R |u_\gamma(0,x)|^2 \bar{w_0}(x) \ dx \qquad \forall t \geq 0,\\
\gamma \int_0^\infty \int_\R |\p_x u_\gamma(t,x)|^2 \Ng^\gamma(x-ct) \bar w_0(x-ct)\:dx\:dt\leq  \int_\R |u(0,x)|^2 \bar{w_0}(x) \ dx.
\ea
\end{equation}
\end{thm}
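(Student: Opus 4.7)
The plan is to work in the traveling frame coordinate $\xi = x-ct$ and to set $u(t,\xi) = (n(t,\xi+ct)-\Ng(\xi))/\Ng'(\xi)$, so that the goal becomes decay of $\int u^2 \bar w_0 \, d\xi$. Writing $n = \Ng + u \Ng'$, expanding the nonlinear flux $\gamma n^\gamma \p_x n$ and the reaction term $n(1-n^\gamma)$ to first order in $u\Ng'$, and using that $\Ng$ is a stationary solution in the moving frame, one obtains an evolution equation of the form $\p_t u = \mathcal L u + R[u]$, where $\mathcal L$ is a second order linear operator with principal part $\gamma \Ng^\gamma \p_\xi^2 u$ plus lower-order transport-type terms, and $R[u]$ is at least quadratic in $u$.

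The second step is an energy identity: multiply the equation by $u \bar w_0$ and integrate in $\xi$. The weight $\bar w_0 = K \Ng^\gamma (\Ng')^2 \exp(\int_{\xim}^\xi c/(\gamma \Ng))$ is engineered precisely so that, after integration by parts, the linear part yields
\[
\frac12 \frac{d}{dt}\int_\R u^2 \bar w_0 \, d\xi + \gamma \int_\R (\p_\xi u)^2 \Ng^\gamma \bar w_0 \, d\xi + \int_\R \mathcal B \, u^2 \bar w_0 \, d\xi = \int_\R R[u] \, u \, \bar w_0 \, d\xi,
\]
with $\mathcal B \geq 0$. Indeed, the exponential factor plays the role of an integrating factor: its logarithmic derivative $c/(\gamma \Ng)$ is matched to the drift coefficient coming from $c\p_\xi u$, while the $\Ng^\gamma (\Ng')^2$ prefactor absorbs the Jacobian from the change of variables $w = u\Ng'$ applied to the PME structure. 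The main algebraic check is that all boundary terms at $\pm\infty$ vanish, which follows from the double-exponential growth of $\bar w_0$ at $+\infty$ balanced against the sharp decay of $\Ng$, and from the estimates of Theorem~\ref{thm:prop-Ng} at $-\infty$.

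The third step is the spectral gap: I would establish a weighted Poincaré inequality
\[
\int_\R u^2 \bar w_0 \, d\xi \leq C_\gamma \, \gamma \int_\R (\p_\xi u)^2 \Ng^\gamma \bar w_0 \, d\xi
\]
for all admissible $u$. Piecing together Hardy-type estimates on the four zones identified in Section~\ref{sec:TW-descript} (congested zone, intermediate zones on either side of $\xi^0_\gamma$, free zone), with the quantitative profile bounds of Lemma~\ref{lem:quantitative-bounds} and Lemma~\ref{lem:txg-xim}, one obtains a constant $C_\gamma = O(\eta_1^{-\gamma})$. The degeneracy comes exclusively from the transition region $(\xim, \txg)$, where $\|\Ng'\|_\infty$ blows up like $(1-1/(2c))^{-\gamma}$ and the ratio $\bar w_0 /(\Ng^\gamma \bar w_0)$ inherits that blow-up; this dictates the exponentially small rate $c_\gamma = O(\eta_1^\gamma)$ announced in the theorem.

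The fourth step is to control $R[u]$ perturbatively. Assumption (H1) and the comparison principle give $n(t,\cdot) \in [\Ng(\cdot+h-ct), \Ng(\cdot-h-ct)]$ for all $t \geq 0$, hence $\|n - \Ng(\cdot-ct)\|_{L^\infty} \leq 2h\|\Ng'\|_\infty \lesssim h\,\eta_1^{-\gamma}$. Thus the quadratic remainder satisfies $|R[u]u| \lesssim h \eta_1^{-\gamma}\, (\p_\xi u)^2 \Ng^\gamma$ plus harmless terms, so that choosing $h \leq \eta_2^\gamma$ with $\eta_2$ small enough absorbs $\int R[u] u \,\bar w_0$ into half the dissipation. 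Combined with the spectral gap, this yields $\frac{d}{dt}\int u^2 \bar w_0 + c_\gamma \int u^2 \bar w_0 \leq 0$ and the time-integrated dissipation bound, and Gronwall concludes. The main obstacle I expect is the careful construction and proof of the weighted Poincaré inequality across the transition zone, and the verification that the precise algebraic structure of the weight $\bar w_0$ matches the linear operator closely enough that \emph{no} unsigned first-order terms survive the integration by parts — this is the point where the choice of exponential factor involving $c/(\gamma\Ng)$, rather than a simpler exponential, becomes essential.
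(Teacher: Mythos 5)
Your steps (1), (2), and (4) track the paper's approach fairly closely: the change of unknown to $u=(n-N)/N'$, the weighted $L^2$ energy identity with $\bar w_0$ chosen so that the zeroth-order term drops out (Lemma~\ref{prop:lin}), and the absorption of the quadratic remainder via the $L^\infty$ bound from the comparison principle (Lemma~\ref{lem:u-max}, Lemmas~\ref{lem:NL1}--\ref{lem:NL2}) are all present in the paper's proof.

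The gap is in your step (3). You assert the \emph{clean} weighted Poincar\'e inequality
\[
\int_\R u^2\,\bar w_0\,d\xi \;\leq\; C_\gamma\,\gamma\int_\R (\partial_\xi u)^2\,\Ng^\gamma\,\bar w_0\,d\xi ,
\]
but the paper explicitly disclaims it: right before Proposition~\ref{prop:poincare} the authors write that, to their knowledge, \emph{such an inequality does not hold for the weight} $\bar w_0$. What they actually prove is the weaker variant \eqref{eq:poincare-b}, which carries an extra term $C_\gamma\int_\R v^2 e^{\sqrt{\gamma}\xi}\,d\xi$ on the right. The technical obstruction lives precisely in the transition zone $(\xim,\txg)$, where $\rho\rho''$ with $\rho=(\bar a\bar w_0)^{1/2}$ becomes negative and must be compensated; the weight $e^{\sqrt{\gamma}\xi}$ is exactly what is needed there, and no estimate involving only $\bar w_0$ and the dissipation term is available. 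Also, your step (2) slightly misreads the role of the weight: $\bar w_0$ is chosen to make the zeroth-order term in the energy identity \emph{vanish}, not to produce a coercive $\mathcal B\geq 0$; to actually generate a positive lower-order damping term you must modify the weight.

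This is why the paper introduces the modulated weight $\bar w=\bar w_0\bar\phi$ with $\bar\phi'=-\frac{\delta_\gamma}{\sqrt{\gamma}}e^{\sqrt{\gamma}\xi}(\bar a\bar w_0)^{-1}$ (Lemma~\ref{lem:mod-weight}): the energy estimate for $\bar w$ produces the extra dissipation $\delta_\gamma\int u^2 e^{\sqrt{\gamma}(x-ct)}$ needed to feed the right-hand side of the weaker Poincar\'e inequality. Without this device, the spectral gap is not established and the Gronwall step cannot be run. Your outline would need to be repaired either by supplying a proof of the clean Poincar\'e inequality (which is not believed to hold) or by incorporating the weight modulation and the corresponding extra $L^2$ dissipation.

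Two smaller points: the paper states the final $L^\infty$ smallness in terms of $\|u\|_\infty$ and $\|u\,\partial_x N/N\|_\infty$ (not $\|n-N\|_\infty$); this matters because the quadratic terms in $G(u)$ are naturally measured with these quotients, and passing from $\|n-N\|_\infty\leq 2h\|N'\|_\infty$ to a bound on $u$ costs an extra factor that Lemma~\ref{lem:quantitative-bounds} controls. And the decay constant $c_\gamma$ must be tracked through both the Poincar\'e constant and $\delta_\gamma=\delta_0\gamma^{-1/2}$, not only through the profile blow-up $\|\Ng'\|_\infty\lesssim (1-1/(2c))^{-\gamma}$.
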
	
	Note that this statement is merely a rephrasing of Theorem \ref{thm:stab-Ng} in terms of the unknown $u_\gamma$. We emphasize that $u_\gamma$ is a natural variable when linearizing equation \eqref{eq:ng} around $\Ng(x-ct)$.
	Indeed, since equation \eqref{eq:ng} has constant coefficients and since $\Ng(x-ct)$ is a particular solution of the equation, it is classical that $\p_x \Ng(x-ct)$ is a solution of the linearized equation around $\Ng(x-ct)$ (and we also recall that $\p_x \Ng$ does not vanish on $\R$).
	Moreover, $\ng(t,x)-\Ng(x-ct)$ is also a solution of the linearized equation, up to a quadratic remainder which we will treat perturbatively.
	Therefore working with energies depending on $u_\gamma$ is similar to deriving relative entropies for the system.
	

	The result relies on two main estimates: a $L^\infty$ control on $n-N$ (almost immediate, see below) and a more complicated $L^2$ weighted estimate on the variable $u$.
	Indeed, an easy computation (see subsection \ref{ssec:eq-u}) shows that $u$
	satisfies the equation
	\begin{equation}{\label{eq:u}}
	\partial_t u + b \partial_x u - a \partial^2_x u 
	= \dfrac{\gamma}{\gamma+1} \dfrac{\partial^2_x G(u)}{N'(x-ct)} - \dfrac{G(u)}{N'(x-ct)},
	\end{equation}
	with $a=\bar a(x-ct)$, $b=\bar b(x-ct)$ and 
	\begin{align*}
	\bar a := \gamma N^\gamma, \quad 
	\bar b := -2\gamma \dfrac{\big(N^\gamma N'\big)'}{N'} 
		= -2\gamma^2 N^{\gamma-1}N' - 2\gamma N^\gamma \dfrac{N''}{N'},
	\end{align*}
	and
	\be\label{def:G}
	G(u) := n^{\gamma+1} - N^{\gamma+1}(x-ct) - (\gamma+1)N^\gamma (x-ct)(n-N(x-ct)).
	\ee
	
	Let us make a few remarks before exposing the main ingredients of the proof. First, we emphasize that all unknowns and coefficients depend on $\gamma$ (i.e. $b,a, u, G, N$). 
	As mentioned above, we chose not to make this dependency explicit in our notation.
	Second, equation \eqref{eq:u} has a structure of the type
	\[
	\p_t u + \mathcal L u= \mathcal G[u],
	\]
	where $\mathcal L$ is a linear operator, corresponding to the linearization of equation \eqref{eq:ng} around $\ng=\Ng$, and $\mathcal G[u]$ is a quadratic operator in the sense of \eqref{in:G}.
	
	Quite classically, the core of our proof relies on the two following observations:
	\begin{itemize}
	    \item The linear operator $\mathcal L$ is coercive in some weighted $H^1$ space. More precisely, there exists a weight $\bar w$ and a constant $\delta=\delta_\gamma>0$ with the following property:
	    for any $v\in \mathcal C^2_c(\R)$,
	    \be\label{in:coercivity}
	    \int_{\R} (\bar b\p_\xi v - \bar a\p_\xi^2 v) v \bar w\geq\int_{\R} (\p_\xi v)^2 \bar a \bar w + \frac{\delta}{2}\int_{\R} |v|^2 e^{\sqrt{\gamma}\xi} - \frac{c}{2} \int_\R |v|^2 \p_\xi \bar w.  
	    \ee
	    Note that the last term will enter the time derivative of the energy $\int |u|^2 w$ when we perform energy estimates.
	    
	    This type of coercivity property had been identified by Leyva and Plaza in \cite{Leyva2020}, without the $L^2$ term $\int_{\R} |v|^2 e^{\sqrt{\gamma}\xi}$, which will play a crucial role in the energy estimates.

	   \item The nonlinear term $\mathcal G[u]$ satisfies
	   \be\label{in:G}
	   |\mathcal G[u]| \leq C_\gamma |u|(|u| + |\p_x u|).
	   \ee
	    Hence, if $\|u\|_{L^{\infty}}$ is small enough, we can hope to absorb this term in the energy dissipation provided by the coercivity of $\mathcal L$.
	    
	\end{itemize}
	The remainder of the section is devoted to a more rigorous statement and to the proofs of the above heuristic arguments.
	Concerning the smallness of the $L^\infty$ bound, a possible strategy could be to differentiate equation \eqref{eq:u} with respect to $x$ and to derive uniform, high regularity bounds on $u$.
	This strategy is likely to succeed. However, it will probably come at a high technical cost.
	Consequently, to simplify the proof and the presentation, we chose here to take advantage of the parabolic structure of the equation and use the comparison principle (or maximum principle), which immediately implies an $L^\infty$ bound on $n$ and $u$.
	
	\begin{rmk}
	Let us mention by anticipation that the constant $\delta$ in \eqref{in:coercivity} will be small, while the constant $C_\gamma$ in \eqref{in:G} will be very large.
	Whence we will need $\|u\|_{L^{\infty}}$ to be very small (in fact, exponentially small) to treat the quadratic term as a perturbation.
	This is related to the strong singularities in $\Ng'$ which were highlighted in the previous section (recall that $\|\Ng'\|_{L^{\infty}}$ blows up exponentially, see Lemmas \ref{lem:transition} and \ref{lem:quantitative-bounds}).

	\end{rmk}

	Let us now present the main ideas of the proof.

\subsubsection*{Structure of the linearized system - weighted $L^2$ estimate}

We  start from a reference weight $\bar w_0$, which is defined as
the solution of the differential equation
	\begin{equation}\label{eq:w}
	\begin{cases}
	(\bar a \bar w_0)'(\xi) + \big(\bar b(\xi) - c\big) \bar w_0= 0 \quad \text{for}~ \xi \in \R,\\
	\bar{w}_0 (\xi_0) = 1 \quad \text{for some}~ \xi_0 \in \R.
	\end{cases}
	\end{equation}
	
This weight is identical to the one identified by Leyva and Plaza in \cite[Section 3.1]{Leyva2020}, although our derivation differs from theirs, see subsection~\ref{sec:proof-lem-3.4}. For this weight $\bar w_0$, we have the following

	\begin{lem}[Stability estimates for the linearized system]\label{prop:lin}
	Let $u$ be a smooth solution to
	\begin{equation}{\label{eq:u-lin}}
	\partial_t u + b \partial_x u - a \partial^2_x u 
	= S, 
	\end{equation}
    where $S$ is a general source term.
The following equality holds, with $w_0(t,x)=\bar w_0(x-ct)$
	\begin{multline}
	  \label{eq:u-lin-0}
	 \int_{\R} |u(t,x)|^2 w_0(t,x) \ dx + 2 \int_0^t\int_{\R} a(s,x)(\partial_x u(s,x))^2 w_0(s,x) \ dx \; ds \\
	= \int_{\R} |u^0(x)|^2 \bar{w}_0(x) \ dx + 2 \int_0^t\int_\R S(s,x) u(s,x) w_0(s,x) \ dx ds.  
	\end{multline}

	\end{lem}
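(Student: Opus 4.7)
The plan is to perform a straightforward weighted $L^2$ energy estimate, multiplying \eqref{eq:u-lin} by $2 u w_0$ and integrating over $\R$. The key mechanism is that the ODE \eqref{eq:w} defining $\bar w_0$ has been designed precisely so that the zero-order terms produced by the time derivative of the weight, the transport term $b\p_x u$, and the diffusion term $-a\p_x^2 u$ cancel exactly, leaving only the prescribed dissipation and source contributions.

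Concretely, I would first note that since $w_0(t,x)=\bar w_0(x-ct)$, we have $\p_t w_0 = -c\,\p_x w_0$. Writing
\[
\int_\R 2 u w_0\, \p_t u\,dx = \frac{d}{dt}\int_\R u^2 w_0\,dx + c\int_\R u^2 \p_x w_0\,dx,
\]
then integrating by parts in $\int 2 u w_0 b\,\p_x u$ and in $-\int 2 u w_0 a\,\p_x^2 u$ (the latter twice), I obtain
\[
\frac{d}{dt}\int_\R u^2 w_0\,dx + 2\int_\R a w_0 (\p_x u)^2\,dx - \int_\R u^2\,\p_x\!\left[(b-c)w_0 + \p_x(a w_0)\right]dx = 2\int_\R S u w_0\,dx.
\]
At this point I would invoke the defining ODE \eqref{eq:w}, which reads exactly $(b-c)w_0 + \p_x(aw_0)=0$ at every point (since $b,a,w_0$ all depend on $x$ only through $\xi=x-ct$, this pointwise identity in $\xi$ transfers to the $(t,x)$ variables). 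Therefore the bracket vanishes and the middle term drops out.

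It then suffices to integrate the resulting identity in time from $0$ to $t$ to recover \eqref{eq:u-lin-0}. The argument is purely algebraic once the defining ODE for $w_0$ has been written down: no sign condition, no coercivity, and no information about the profile $\Ng$ is needed at this stage — those ingredients are used only later, to turn the absolute identity \eqref{eq:u-lin-0} into a genuine decay estimate via a Poincaré-type inequality. The only mild technical point is that the manipulations require $u$ to be sufficiently smooth and decaying for the boundary terms in the integrations by parts to vanish; since the lemma is stated for smooth solutions this is not a real obstacle, but in the nonlinear application one would either mollify $u$ and pass to the limit, or work with cut-off functions and use the double-exponential growth of $\bar w_0$ at $+\infty$ together with the assumed decay of $u_\gamma$ to control the boundary contributions.
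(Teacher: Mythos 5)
Your proof is correct and is essentially the same as the paper's: multiply \eqref{eq:u-lin} by $2uw_0$, integrate by parts, and observe that the zero-order terms collapse because the defining ODE \eqref{eq:w} gives $(\bar b - c)\bar w_0 + (\bar a\bar w_0)' = 0$ pointwise. Your version is slightly cleaner in that you note directly that the argument of the outer $\partial_x$ vanishes identically rather than going through the intermediate rewriting, and you rightly flag the boundary-term/decay caveat as a technical footnote.
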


Furthermore the weight $ \bar w_0$ fulfills the following properties:

\begin{lem}[Asymptotic behaviors of $\bar w_0$]\label{lem:w0}
The solution of~\eqref{eq:w} with $\xi_0=\xim$ is given by
\begin{equation}\label{formula-w_0}
\bar w_0(\xi) =K N^\gamma(\xi) ( N'(\xi))^2\exp\left(\int_{\xim}^\xi \frac{c}{\bar a(z)} dz \right), 
\end{equation}
where the normalization constant $K$ is chosen so that $\bar w_0(\xim)=1$. We find that $K\propto \gamma^{3/2}$.
Consequently $\bar w_0$ has the following asymptotic behaviors:
\begin{itemize}
	\item as $\xi \to +\infty$, $\bar w_0$ has a double exponential growth: there exist $C_1, C_2, C>0$ independent of $\gamma$ such that for all $\xi \geq C$,
    \begin{equation}
     \exp\left( \exp\left(C_1\gamma\xi\right)\right)
    \leq \bar w_0 \leq 
     \exp\left( \exp\left(C_2\gamma\xi\right)\right);
    \end{equation}
	
	\item as $\xi \to -\infty$, $\bar w_0$ decreases exponentially to $0$: there exists $C>0$ independent of $\gamma$ such that for all $\xi \leq -C$,
	\begin{equation}
	C^{-1}\frac{K}{\gamma^2}\exp\left(2\left(1+\frac{C}{\sqrt{\gamma}}\right) \xi\right) \leq \bar w_0 \leq C\frac{K}{\gamma^2}\exp\left(2\left(1-\frac{C}{\sqrt{\gamma}}\right) \xi\right) .
	\end{equation}
\end{itemize}

\end{lem}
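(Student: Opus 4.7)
Equation~\eqref{eq:w} is first-order linear: dividing by $\bar a \bar w_0$ yields
\[
(\ln \bar w_0)' = -\bar a'/\bar a - (\bar b - c)/\bar a.
\]
Using $\bar a = \gamma N^\gamma$ and $\bar b = -2\gamma (N^\gamma N')'/N'$, a short computation shows $-\bar a'/\bar a - \bar b/\bar a = \gamma (\ln N)' + 2 (\ln|N'|)'$. Integrating from $\xim$ to $\xi$ and imposing $\bar w_0(\xim)=1$ produces the announced formula with $K = 1/[N^\gamma(\xim)(N'(\xim))^2]$. To fix its scale, I would use $N^\gamma(\xim) = P(\xim) \sim \gamma^{-1/2}$ from Definition~\ref{def:xim-txg}, together with the flux identity $N' = (J_\gamma/N - c)/(\gamma N^{\gamma-1})$. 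Since $J_\gamma \to J_{HS}$ uniformly on bounded intervals (Proposition~\ref{prop:qualitative-Pg-Ng}), $J_{HS}(0^-) = c-1$, $N(\xim) \to 1$ (Lemma~\ref{lem:txg-xim}), and $N^{\gamma-1}(\xim) = P(\xim)/N(\xim) \sim \gamma^{-1/2}$, the numerator tends to $-1$, so $|N'(\xim)| \sim \gamma^{-1/2}$ and hence $K \sim \gamma^{3/2}$.

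\textbf{Asymptotic as $\xi \to -\infty$.} For $\xi \leq -C$ with $C$ large, I would control the three factors independently. By~\eqref{encadr-P-xim}, $N^\gamma = P \geq 1-e^{-\mu_\gamma C}$ is bounded away from zero, with $\mu_\gamma = 1 - C/\sqrt\gamma$. For $(N')^2$, the mean value theorem applied to $P=N^\gamma$ gives $1-N \sim (1-P)/\gamma$, and linearizing~\eqref{eq:Ng} around $N=1$ (as done in Theorem~\ref{t:gilding}) yields $|N'| \sim \lambda_\gamma (1-N)$, with $\lambda_\gamma \to 1$. Combined with the bounds on $1-P$, this produces $(N')^2 \in [c_0\, e^{2\xi}/\gamma^2,\, C_0\, e^{2\mu_\gamma\xi}/\gamma^2]$. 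For $F(\xi)$, I would split the integral $\int_{\xim}^\xi c/(\gamma N^\gamma)$ at $z = -1$: the piece on $[-1,\xim]$ has length $O(1/\sqrt\gamma)$ by Lemma~\ref{lem:txg-xim} and integrand at most $O(\sqrt\gamma)$, contributing $O(1/\sqrt\gamma)$; the piece on $[\xi,-1]$ yields $c\xi/\gamma + O(1/\gamma)$ after writing $1/N^\gamma = 1 + (1-P)/P$ and using the upper bound on $1-P$. Multiplying the three factors then gives bounds of the form $C^{-1} K\gamma^{-2} e^{(2+c/\gamma)\xi} \leq \bar w_0 \leq CK\gamma^{-2} e^{(2\mu_\gamma+c/\gamma)\xi}$; since $\xi < 0$, these imply the lemma's estimates after relaxing the exponents to $2(1\pm C/\sqrt\gamma)$.

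\textbf{Asymptotic as $\xi \to +\infty$.} The bounds~\eqref{encadr-N-free} give exponential upper and lower bounds on $N$, so $N^\gamma$ decays exponentially in $\gamma\xi$ while $1/(\gamma N^\gamma)$ blows up exponentially in $\gamma\xi$. Direct integration then yields $\int_{\xim}^\xi c/(\gamma N^\gamma)\,dz \sim (c^2/\gamma^2)(1-1/c\mp\delta)^{-\gamma} e^{c_*\gamma\xi}$ for some $c_* > 0$, and exponentiating produces double exponential growth for $F$. The remaining factor $N^\gamma(N')^2$ is only simply exponential in $\xi$, so it is completely swamped, and $\exp(\exp(C_1\gamma\xi)) \leq \bar w_0 \leq \exp(\exp(C_2\gamma\xi))$ for $\xi \geq C$.

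\textbf{Main obstacle.} The trickiest point is the $F$-factor near $\xim$ in the left-tail analysis: the integrand $c/(\gamma N^\gamma)$ is large (of order $\sqrt\gamma$) exactly there, so naively one might expect an $O(1)$ correction that would spoil the announced exponent. The saving grace is that the interval where $N^\gamma$ is small has length only $|\xim|=O(1/\sqrt\gamma)$ by Lemma~\ref{lem:txg-xim}, so the dangerous contribution telescopes to $O(1/\sqrt\gamma)$ --- precisely the slack allowed by the statement. A similar matching underlies the prefactor $K\sim\gamma^{3/2}$: one needs $|N'(\xim)|$ to scale as $\gamma^{-1/2}$, which requires carefully pairing the smallness of $P(\xim)$ with the flux jump $J_{HS}(0^-) - c = -1$.
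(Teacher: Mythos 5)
Your proposal is correct and follows essentially the same route as the paper. The only mild variation is in pinning down the scale of $K$: you estimate $|N'(\xim)|\sim\gamma^{-1/2}$ via the flux identity $\gamma N^\gamma N' = J_\gamma - cN$ together with $J_\gamma(\xim)\to c-1$ and $P(\xim)\sim\gamma^{-1/2}$, whereas the paper writes $N'=\gamma^{-1}P'P^{1/\gamma-1}$ and uses directly that $P'(\xim)$ is bounded above and below (from the sub/super-solutions for $P$). Both are one-line arguments based on the same underlying input (uniform control of $P'$ near $\xim$), so the distinction is cosmetic. The remaining steps — the cancellation $-\bar a'/\bar a - \bar b/\bar a = \gamma(\ln N)' + 2(\ln|N'|)'$, the $+\infty$ double-exponential from $\exp\left(\int c/(\gamma N^\gamma)\right)$ with $N$ exponentially small, and the $-\infty$ analysis combining $1-N\sim(1-P)/\gamma$, $|N'|\sim\tilde\lambda(1-N)$ and the bounds~\eqref{encadr-P-xim}, with the crucial observation that the transition contribution to $\int c/(\gamma N^\gamma)$ is $O(\gamma^{-1/2})$ because $|\xim|=O(\gamma^{-1/2})$ — match the paper's argument.
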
 

\subsubsection*{Spectral gap and Poincaré inequality}

However, the sole weight $w_0$ is not entirely sufficient to have an exponential decay in time of the energy $\int_\R |u|^2 w_0$.
Indeed, in order to prove such  an exponential decay, we need a Poincaré inequality of the type
\[
\int_{\R} |v|^2 \bar w_0 \leq C_\gamma \int_{\R} (\p_x v)^2 \bar a \bar w_0\quad \forall v\in \mathcal C^1_c(\R).
\]
In other words, we need to prove a spectral gap inequality.
To the best of our knowledge, such an inequality does not hold for the weight $\bar w_0$.
However, we are able to prove a variant of such an inequality, with an additional $L^2$ term in the right-hand side:
\begin{prop}[Weighted Poincar\'e-type inequality]\label{prop:poincare}

Let $v\in \mathcal C^1_c(\R)$. 

Then there exists a constant $\bar C$ independent of $\gamma$ and a constant $C_\gamma\leq C^\gamma$ such that
\be \label{eq:poincare-b}
\int_{-\infty}^{\xim} v^2 \gamma \Ng^\gamma \bar w_0 d\xi + \int_{\tilde \xi}^{+\infty} v^2 \frac{1}{\gamma \Ng^\gamma} \bar w_0 d\xi \\
\leq \bar C \int_{\R}   (\partial_\xi v)^2 \bar a \bar w_0 \, d\xi  + C_\gamma \int_{\R} v^2 e^{\sqrt{\gamma}\xi } \ d\xi.
		\ee

In particular, there exists a constant $c_\gamma$, satisfying $c_\gamma \geq \eta^\gamma$ for some $\eta\in ]0,1[$ independent of $\gamma$, such that
\[
c_\gamma \int_\R v^2 \bar w_0 d\xi 
\leq  \int_\R (\partial_\xi v)^2 \bar a \bar w_0 \, d\xi +  \int_\R |v|^2 \exp(\sqrt{\gamma} \xi) d\xi.
\]

	\end{prop}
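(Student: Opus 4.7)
The plan is to split $\R$ into the three zones $(-\infty,\xim)$, $(\xim,\txg)$, $(\txg,+\infty)$ identified in Definition~\ref{def:xim-txg}. On each of the two extreme zones I will establish a weighted inequality with a $\gamma$-uniform constant, while in the intermediate zone of length $O(\gamma^{-1/2})$ (Lemma~\ref{lem:txg-xim}) I will rely on the additional $L^2$ term on the right-hand side: the exponent $\sqrt\gamma$ is tuned so that $e^{\sqrt\gamma\xi}$ is of order unity on $(\xim,\txg)$, and hence can absorb any quantity comparable to $\int_{\xim}^{\txg} v^2\,d\xi$.

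For the free zone, I would invoke Muckenhoupt's characterization of the one-dimensional weighted Hardy inequality on a half-line, with LHS weight $\mu=\bar w_0/\bar a$ and RHS weight $\nu=\bar a\bar w_0$. Using the ODE~\eqref{eq:w} together with the identity $\bar b=2c+2N(1-N^\gamma)/N'$ derived from~\eqref{eq:Ng}, I will check that $(\log \bar a\bar w_0)'(\xi)\sim c/\bar a(\xi)$ as $\xi\to+\infty$, which grows exponentially. A Laplace-type argument then gives the matching estimates $\int_{\txg}^r\mu\,d\xi\sim\bar w_0(r)/c$ and $\int_r^{+\infty}d\xi/\nu\sim 1/(c\bar w_0(r))$, whose product is of order $1/c^2$ uniformly in $\gamma$. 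Muckenhoupt's theorem will then yield $\int_{\txg}^{+\infty} v^2\bar w_0/\bar a\,d\xi \le \bar C\int_{\txg}^{+\infty}(v')^2\bar a\bar w_0\,d\xi$ for every $v\in\mathcal C^1_c(\R)$, without any boundary contribution at $\txg$.

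For the congested zone, the LHS and RHS weights coincide (both equal $\bar a\bar w_0\sim\sqrt\gamma\,e^{2\xi}$ by Lemma~\ref{lem:w0}), and I will proceed via a direct integration by parts based on the identity $(\bar a\bar w_0)'\sim 2\,\bar a\bar w_0$ at $-\infty$, which follows from computing $(c-\bar b)/\bar a\to 2$ using $N'\sim-(1-N)$ and $1-N^\gamma\sim\gamma(1-N)$. This yields
\[
\int_{-\infty}^{\xim} v^2\bar a\bar w_0\,d\xi \le \bar C\int_{-\infty}^{\xim}(v')^2\bar a\bar w_0\,d\xi + \bar C\sqrt\gamma\,v(\xim)^2,
\]
where the factor $\sqrt\gamma$ comes from $\bar a\bar w_0(\xim)$. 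To absorb the boundary term, I will use the identity $v(\xim)=v(\xi_0)-\int_{\xim}^{\xi_0}v'$ together with Cauchy-Schwarz and averaging over $\xi_0\in(\xim,\txg)$, which gives $v(\xim)^2\lesssim\sqrt\gamma\int_{\xim}^{\txg}v^2\,d\xi + \gamma^{-1/2}\int_{\xim}^{\txg}(v')^2\,d\xi$. The first integral is controlled by $\sqrt\gamma\int_\R v^2 e^{\sqrt\gamma\xi}\,d\xi$; the second relies on the key pointwise lower bound $\bar a\bar w_0\ge\sqrt\gamma$ on $(\xim,\txg)$, which I would establish by checking, via the phase-portrait analysis of Section~\ref{ssec:phase-portrait} together with the quantitative estimates of Theorem~\ref{thm:prop-Ng}, that $(\log\bar a\bar w_0)'$ changes sign exactly once on $(\xim,\txg)$, so that $\bar a\bar w_0$ is unimodal on this interval and its minimum is attained at one of the endpoints, where both values are $\ge\sqrt\gamma$. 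Verifying this monotonicity property is the main technical obstacle of the proof.

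Finally, the spectral gap inequality follows by comparing $\bar w_0$ with the LHS weights of the first inequality, zone by zone: on the congested zone, $\gamma N^\gamma\ge\sqrt\gamma$ gives $\bar w_0\le\gamma^{-1/2}\bar a\bar w_0$; on the free zone, $\gamma N^\gamma\le(1-1/(2c))^\gamma$ (from Theorem~\ref{thm:prop-Ng}) gives $\bar w_0\le(1-1/(2c))^\gamma\,\bar w_0/(\gamma N^\gamma)$; on the intermediate zone, the pointwise bound $\bar w_0\le C^\gamma$ combined with $e^{\sqrt\gamma\xi}\ge e^{-C}$ gives $\int_{\xim}^{\txg}v^2\bar w_0\,d\xi\le C^\gamma\int_\R v^2 e^{\sqrt\gamma\xi}\,d\xi$. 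The smallest of the three resulting multipliers is $\eta^\gamma$ with $\eta=1-1/(2c)\in(0,1)$, which delivers the announced $c_\gamma\ge\eta^\gamma$.
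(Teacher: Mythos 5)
Your plan differs substantially from the paper's proof, and the two are worth contrasting. The paper's argument is based on the single elementary observation that for any positive $\rho\in\mathcal C^2(\R)$ and $v\in\mathcal C^1_c(\R)$ one has $\int_\R v^2\rho\rho''\le\int_\R(\partial_\xi v)^2\rho^2$ (this is just $\int(\partial_\xi(v\rho))^2\ge 0$, expanded and integrated by parts). Taking $\rho=(\bar a\bar w_0)^{1/2}$, the right-hand side becomes exactly the weighted Dirichlet energy, and one then computes $\rho''$ explicitly using \eqref{eq:Ng}; the only remaining work is to bound $\rho\rho''$ from below zone by zone, and the transition zone is handled by allowing the exponentially large constant $C_\gamma$ in front of $\int v^2 e^{\sqrt\gamma\xi}$. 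No boundary terms ever arise because the computation is done on the whole line. By contrast you split $\R$ into three intervals, handle the free zone by Muckenhoupt's theorem, and handle the congested zone by an integration by parts which produces a boundary term $v(\xim)^2$ that then needs to be absorbed. Your route is plausible but significantly heavier (Muckenhoupt requires verifying a two-sided condition uniformly in $r$ and $\gamma$; the boundary term requires a separate trace argument) than the paper's self-contained one.

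The decisive gap in your proposal is the asserted pointwise bound $\bar a\bar w_0\ge\sqrt\gamma$ on $(\xim,\txg)$. You reduce this to unimodality of $\bar a\bar w_0$ on $(\xim,\txg)$ plus the claim that both endpoint values exceed $\sqrt\gamma$. The value at $\xim$ is indeed $\bar a(\xim)\bar w_0(\xim)=\gamma P_\gamma(\xim)=O(\sqrt\gamma)$, but the value at $\txg$ is
\[
\bar a(\txg)\bar w_0(\txg)
=\frac{K}{\gamma}\,(P_\gamma'(\txg))^2\,\exp\!\Bigl(\int_{\xim}^{\txg}\frac{c}{\gamma N_\gamma^\gamma}\Bigr)
=O(\gamma^{-3/2})\,\exp(I_\gamma),
\]
since $|P_\gamma'(\txg)|=\tfrac{c-1}{4\gamma}$ by Definition~\ref{def:xim-txg} and $K\sim\gamma^{3/2}$. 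For this to be $\ge\sqrt\gamma$ you would need $I_\gamma\gtrsim 2\log\gamma$, and nothing you cite gives you that; the crude estimates from Lemma~\ref{lem:txg-xim} only yield $I_\gamma\gtrsim 1$ (and an upper bound $I_\gamma\lesssim\gamma$), so $\bar a(\txg)\bar w_0(\txg)$ could a priori be as small as $O(\gamma^{-3/2})$. If the minimum on $(\xim,\txg)$ is only $O(\gamma^{-3/2})$, your averaging step picks up a $\gamma^{3/2}$ prefactor on $\int(v')^2\bar a\bar w_0$, which destroys the $\gamma$-uniformity of $\bar C$ that the statement requires. (A fix is available within your framework — average $v(\xim)^2$ over $(\xim,0)$ instead of $(\xim,\txg)$, since $|P_\gamma'|$ is bounded away from zero on $(\xim,0)$ by Proposition~\ref{prop:qualitative-Pg-Ng} and hence $\bar a\bar w_0\gtrsim\sqrt\gamma$ there — but that is not what you propose, and the unimodality detour you flag as "the main technical obstacle" is not the right place to invest effort.)

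Separately, the Muckenhoupt step is only sketched. The Laplace-type asymptotics for $\int_{\txg}^r\bar w_0/\bar a$ and $\int_r^\infty(\bar a\bar w_0)^{-1}$ are believable for $r$ large, but the claim that their product is $O(1/c^2)$ \emph{uniformly} in $r\ge\txg$ and in $\gamma$ needs justification near $r=\txg$, where $\bar a\bar w_0$ is not yet in its fast-growth regime. This is fixable, but it is an additional verification that the paper's approach avoids entirely.
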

\begin{rmk}
\begin{itemize}
    \item We recall that we 
    defined $\xim$ so that $\Pg(\xim)=\left(\frac{c^3}{(c-1)(\gamma+1)}\right)^{1/2}$. Consequently, in the first integral of~\eqref{eq:poincare-b}, the term $\gamma \Ng^\gamma$ is bounded from below by $C \sqrt{\gamma}$.
    
    \item In a similar way, for $\xi>\tilde \xi$, we have $\Ng\leq 1- (2c)^{-1}$, so that the term $\frac{1}{\gamma \Ng^\gamma}$ in the second integral in the left-hand side of ~\eqref{eq:poincare-b} is exponentially large.
    
    \item We stated this result for $v\in \mathcal C^1_c(\R)$, but the result can be extended to $v$ in suitable weighted Sobolev spaces by a classical density argument.
    
    \item Let us give a few motivations for the weight $e^{\sqrt{\gamma} \xi}$ in the right-hand side. 
     We actually have some freedom in the choice of  the coefficient of the exponential that we take equal to $\alpha_\gamma=\sqrt{\gamma}$. We could a priori take a larger coefficient $\alpha_\gamma$ with respect to $\gamma$. However $\alpha_\gamma$ must satisfy a number of conditions.
    First, an important feature is that the growth (resp. decay) of this weight as $\xi\to +\infty$ (resp. $\xi\to -\infty$) is lower (resp. stronger) than the one of $\bar w_0$. 
    Moreover, the energy dissipation provides a very good control of the energy in the zone $\xi<\xim$ and $\xi>\txg$, as we can see in inequality \eqref{eq:poincare-b}.
    The additional term $\int_\R v^2 e^{\sqrt{\gamma} \xi}$ is only needed in the transition zone $(\xim , \txg)$, as we shall see in the course of the proof.
   Our choice $\alpha_\gamma=\sqrt \gamma$
    is actually motivated by the need to control, uniformly with respect to $\gamma$, the exponential $\exp(\alpha_\gamma \xi^-_\gamma)$ (see in particular \eqref{phi-3}). Since $\xim = O(\gamma^{-1/2})$, it leads us to set $\alpha_\gamma= \sqrt{\gamma}$.
    
    \item The proof of Proposition \ref{prop:poincare} relies on the quantitative estimations of Lemma \ref{lem:quantitative-bounds}, and will be performed in subsection ~\ref{proof-poincare}.

\end{itemize}

\end{rmk}

As a consequence, if we are able to  have an additional lower-order dissipation term in the energy estimate (the term $\int |v|^2 e^{\sqrt{\gamma } \xi}$ in the right-hand side), the exponential decay of the energy for the linearized system will follow.
In order to get this extra dissipation, it turns out that it is sufficient to modulate slightly the weight $\bar w_0$.
More precisely, we define $\bar w=\bar w_0 \bar \phi$, where
\be\label{eq:psi}\ba
\bar \phi(-\infty)=2,\\
\bar \phi'=-\frac{\delta_\gamma}{\sqrt{\gamma}} \exp\left(\sqrt{\gamma}\xi\right)(\bar a \bar w_0)^{-1},
\ea
\ee
and the constant $\delta_\gamma>0$ is chosen such that $\bar\phi (+\infty)\geq 1$.

Lemma \ref{lem:w0} ensures that $\bar \phi'\in L^1(\R)$, and therefore $\bar \phi$ is well-defined and monotonous. 
Note that since $1\leq \bar \phi \leq 2$ by construction, the weights $\bar w_0$ and $\bar w$ are equivalent. However, choosing $\bar w$ gives us the following additional control:

\begin{lem}
    \label{lem:mod-weight} Under the same assumptions and notation as in Lemma \ref{prop:lin}, we have
    \begin{multline}\label{eq:u-lin-mod}
	\int_{\R} |u(t,x)|^2 w(t,x) \ dx + 2 \int_{0}^t \int_{\R} (\partial_x u)^2 a w
	+ \delta_\gamma  \int_0^t \int_{\R} |u(s,x)|^2 \exp\left(\sqrt{\gamma}(x-cs)\right)dxds\\
	= \int_{\R} |u^0|^2 \bar{w}\ dx + \int_0^t \int_\R S(s,x)u(s,x) w(s,x) \ dx ds,
	\end{multline}	
    with 
    \be\label{est:delta_gamma}
     \delta_\gamma =\delta_0 \gamma^{-1/2}\ee,
    for some constant $\delta_0$ independent of $\gamma$.

Furthermore, there exists $\eta_1\in ]0,1[$ and a constant $c_\gamma\geq \eta_1^\gamma$ such that for all $v\in \mathcal C^1_c(\R)$,
\[
c_\gamma \int_\R |v|^2 \bar w \leq  2\int_{\R} (\partial_x v)^2 \bar a \bar w
	+ \delta_\gamma  \int_{\R} |v(\xi)|^2 \exp\left(\sqrt{\gamma}\xi\right)d\xi.
\]

\end{lem}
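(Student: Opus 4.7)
The plan has three steps: derive the energy identity for the modified weight, check that $\bar\phi\in[1,2]$ with the required size of $\delta_\gamma$, and transfer the Poincar\'e inequality from $\bar w_0$ to $\bar w$.

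\medskip
\noindent\textit{Step 1: Energy identity.} Following the computation behind Lemma \ref{prop:lin}, I would multiply \eqref{eq:u-lin} by $u\,w$, with $w(t,x)=\bar w(x-ct)=\bar w_0(x-ct)\,\bar\phi(x-ct)$, and integrate by parts in $x$ to obtain
\[
\tfrac{1}{2}\tfrac{d}{dt}\int_\R u^2 w\,dx + \int_\R a(\partial_x u)^2 w\,dx
= \int_\R S\,u\,w\,dx + \tfrac{1}{2}\int_\R u^2\,\bigl[\partial_t w + \partial_x(bw) + \partial_x^2(aw)\bigr]dx.
\]
In the moving variable $\xi$, the bracket equals $((\bar b-c)\bar w)' + (\bar a\bar w)''$. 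Substituting $\bar w=\bar w_0\bar\phi$ and using the ODE $(\bar a\bar w_0)' + (\bar b-c)\bar w_0=0$ satisfied by $\bar w_0$ (see \eqref{eq:w} and Lemma \ref{lem:w0}), a short expansion makes the terms free of $\bar\phi'$ and $\bar\phi''$ cancel, collapsing the bracket into $(\bar a\bar w_0\bar\phi')'$. From the definition \eqref{eq:psi} of $\bar\phi'$, one has $\bar a\bar w_0\bar\phi'= -(\delta_\gamma/\sqrt\gamma)\,e^{\sqrt\gamma\xi}$, so the bracket reduces to $-\delta_\gamma e^{\sqrt\gamma\xi}$. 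Doubling and integrating over $(0,t)$ yields \eqref{eq:u-lin-mod}.

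\medskip
\noindent\textit{Step 2: Admissibility and size of $\delta_\gamma$.} The conditions $\bar\phi(-\infty)=2$ and $\bar\phi(+\infty)\geq 1$ combined with the explicit form of $\bar\phi'$ amount to
\[
\frac{\delta_\gamma}{\sqrt\gamma}\int_\R \frac{e^{\sqrt\gamma\xi}}{\bar a(\xi)\bar w_0(\xi)}\,d\xi \leq 1.
\]
I would split the integral over the three zones $(-\infty,\xim)$, $(\xim,\txg)$ and $(\txg,+\infty)$. On $(-\infty,\xim)$, Lemma \ref{lem:w0} gives $\bar a\bar w_0\gtrsim \sqrt\gamma\,e^{2\xi}$, so the integrand is bounded by $\gamma^{-1/2}e^{(\sqrt\gamma-2)\xi}$; since $\xim=O(\gamma^{-1/2})$ by Lemma \ref{lem:txg-xim}, this contribution is $O(\gamma^{-1})$. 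On $(\txg,+\infty)$, the double-exponential growth of $\bar w_0$ given by Lemma \ref{lem:w0} makes the contribution negligible. The main contribution comes from the intermediate zone, whose length is $O(\gamma^{-1/2})$: using the explicit formula \eqref{formula-w_0} together with the quantitative estimates of Lemmas \ref{lem:transition}, \ref{lem:quantitative-bounds} and \ref{lem:txg-xim}, one bounds this contribution by $C\gamma$. Hence $\delta_\gamma=\delta_0\,\gamma^{-1/2}$ satisfies the admissibility constraint, and by construction $\bar\phi$ is monotone decreasing with values in $[1,2]$.

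\medskip
\noindent\textit{Step 3: Transfer of the Poincar\'e inequality.} Since $\bar\phi\in[1,2]$, we have $\bar w_0\leq \bar w\leq 2\bar w_0$. Applying Proposition \ref{prop:poincare} to $v$ and using these equivalences yields
\[
\tfrac{1}{2}c_\gamma^{(p)}\int_\R v^2\,\bar w\,d\xi \leq \int_\R (\partial_\xi v)^2\,\bar a\,\bar w\,d\xi + \int_\R v^2\,e^{\sqrt\gamma\xi}\,d\xi,
\]
where $c_\gamma^{(p)}\geq \eta^\gamma$ is the constant supplied by Proposition \ref{prop:poincare}. Multiplying by $\delta_\gamma\leq 1$ (valid for $\gamma$ large) and using $\delta_\gamma\leq 2$ on the first right-hand side term gives the claimed inequality with $c_\gamma=\tfrac{1}{2}\delta_\gamma c_\gamma^{(p)}$. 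Since $\delta_\gamma\sim\gamma^{-1/2}$ and $c_\gamma^{(p)}\geq \eta^\gamma$, for any fixed $\eta_1\in(0,\eta)$ we have $c_\gamma\geq \eta_1^\gamma$ for $\gamma$ large enough.

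\medskip
\noindent\textit{Main obstacle.} The key technical point is the quantitative estimate $\int e^{\sqrt\gamma\xi}/(\bar a\bar w_0)\,d\xi = O(\gamma)$, and in particular bounding the contribution from the intermediate zone $(\xim,\txg)$ where $\bar w_0$ lacks a simple asymptotic form and is governed by the strong variations of $\Ng$ across the transition layer analyzed in Section \ref{sec:TW-descript}.
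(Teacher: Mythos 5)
Your proposal is correct and follows essentially the same route as the paper. Step~1 reproduces the paper's computation: the ODE \eqref{eq:w} for $\bar w_0$ kills the $\bar\phi$-proportional part of the bracket, leaving $(\bar a\bar w_0\bar\phi')'=-\delta_\gamma e^{\sqrt\gamma\xi}$, from which \eqref{eq:u-lin-mod} follows (the paper's formula \eqref{eq:u-lin-mod} actually omits a factor $2$ on the source term compared to \eqref{eq:u-lin-0}, a typo you implicitly correct by writing ``doubling''). Step~3 is exactly the ``easy consequence'' the paper states without detail, and your bookkeeping with $\delta_\gamma\leq 2$ and $c_\gamma=\tfrac12\delta_\gamma c_\gamma^{(p)}$ is valid since a polynomially small prefactor $\gamma^{-1/2}$ is absorbed by any slight decrease of the exponential base $\eta$.

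Step~2 uses the same three-zone decomposition $(-\infty,\xim)\cup(\xim,\txg)\cup(\txg,+\infty)$ as the paper and arrives at the matching bounds ($O(\gamma^{-1})$ on the left, negligible on the right, $O(\gamma)$ in the middle). The only place where you are more terse than the paper is the intermediate zone: the paper substitutes the explicit form \eqref{formula-w_0} of $\bar w_0$, uses Lemma~\ref{lem:txg-xim} to get $|\Pg'|\gtrsim \gamma^{-1}$, Lemma~\ref{lem:transition} for $\Ng\gtrsim 1$, and $\Ng^\gamma\leq \Pg(\xim)=O(\gamma^{-1/2})$, together with the interval length $O(\gamma^{-1/2})$ and $K\propto\gamma^{3/2}$, to extract the precise $C\gamma$ bound. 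You correctly flag this as the main technical obstacle, but for a complete argument you would need to carry out that substitution rather than just cite the lemmas. Everything else is sound.
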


\begin{df}
In the rest of the paper, we set
\[
\cD(t):=2 \int_{\R}(\p_x u(t,x))^2 a(t,x) w(t,x) dx + \delta_\gamma \int_{\R} |u(t,x)|^2 \exp(\sqrt{\gamma}(x-ct))dx,
\]
which is the total dissipation term.

\end{df}

Gathering the above results, we see that any solution of the linearized equation \eqref{eq:u-lin} with $S=0$, with an initial data such that $\int_\R |u_0|^2 \bar w_0<\infty$, decays exponentially (at a rate $c_\gamma$) as $t\to \infty$.

\subsubsection*{$L^\infty$ estimate}

In order to prove that the dynamics of the nonlinear equation \eqref{eq:u} is driven by the linearized part of the equation, and that the nonlinear term in the right-hand side of \eqref{eq:u} can be treated perturbatively, we will need a last ingredient, which is a direct consequence of the comparison principle:

\begin{lem}[$L^\infty$ estimate]\label{lem:u-max}
	Let 
	$h$ be small enough and
	assume that $n^0$ lies between two shifts of the reference profile $N$:
		\[
		N (x+h) \leq n^0(x) \leq N(x-h) .
		\]
		Then, for all $t\geq 0$, for all $x\in \R$,
		\[
	N(x+h-ct)\leq 	n(t,x)\leq N(x-h-ct).
		\]
		
		From Lemma \ref{lem:quantitative-bounds}, we have
	\begin{equation}\label{eq:u-max}
	\|u\|_{L^\infty(\R_+ \times \R)} + \left\|\dfrac{n-N(x-ct)}{N(x-ct)}\right\|_{L^\infty(\R_+ \times \R)} \leq C^{\gamma}h,
	\end{equation}
	where $C$ is a positive constant independent of $\gamma$.
	
\end{lem}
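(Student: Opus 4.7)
The plan is to establish the $L^\infty$ bracketing via the parabolic comparison principle, and then convert this bracketing into the quantitative bounds on $u$ and $(n-N)/N$ by invoking the difference-quotient estimates already proved in Lemma \ref{lem:quantitative-bounds}.

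\textbf{Step 1: Comparison principle for \eqref{eq:ng}.} Both $(t,x)\mapsto N(x-h-ct)$ and $(t,x)\mapsto N(x+h-ct)$ are exact traveling wave solutions of~\eqref{eq:ng} (the equation is translation invariant in $x$). Since the equation
\[
\partial_t n - \gamma \partial_x(n^\gamma \partial_x n) = n(1-n^\gamma)
\]
is a quasilinear (degenerate) parabolic equation with a Lipschitz reaction term on $[0,1]$, it satisfies a comparison principle among nonnegative bounded solutions. The argument is standard: one writes the equation in the form $\partial_t n - \partial_x^2(\tfrac{\gamma}{\gamma+1} n^{\gamma+1}) = n(1-n^\gamma)$, approximates the porous-medium diffusion by a nondegenerate one $\gamma n^\gamma+\varepsilon$, applies the strong maximum principle to the smooth approximations, and passes to the limit as $\varepsilon\to 0$ (or, alternatively, uses the doubling-of-variables technique of Kru\v zkov, adapted to the PME as in \cite{gilding2005fisher}). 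From the hypothesis $N(x+h)\leq n^0(x)\leq N(x-h)$, this yields
\[
N(x+h-ct)\leq n(t,x)\leq N(x-h-ct) \qquad \forall (t,x)\in \R_+\times\R.
\]

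\textbf{Step 2: Quantitative pointwise control.} Since $N$ is monotone decreasing, the above inequality gives
\[
|n(t,x)-N(x-ct)|\leq \max\bigl(N(x+h-ct)-N(x-ct),\ N(x-ct)-N(x-h-ct)\bigr).
\]
Assuming $h\leq 1$, Lemma \ref{lem:quantitative-bounds} provides
\[
\frac{|N(y+h)-N(y)|}{N(y)} \leq C^\gamma h\qquad\text{and}\qquad \left|\frac{N(y+h)-N(y)}{N'(y)}\right| \leq C^\gamma h,
\]
uniformly in $y\in \R$, $0<|h|\leq 1$. Applying these with $y=x-ct$ and $y=x-h-ct$ (and using, for the latter, that $|N'|$ is locally comparable at scale $h\leq 1$ thanks again to Lemma \ref{lem:quantitative-bounds}, which also controls $|N'(y-h)/N'(y)|$ via the second difference quotient), we conclude that
\[
\left\|\frac{n-N(x-ct)}{N(x-ct)}\right\|_{L^\infty(\R_+\times\R)} + \|u\|_{L^\infty(\R_+\times \R)} \leq C^\gamma h,
\]
which is exactly \eqref{eq:u-max}.

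\textbf{Expected main obstacle.} The comparison principle in Step 1 is classical, but its rigorous justification for the degenerate PME with source requires some care, since $n$ may vanish and the diffusion coefficient $n^\gamma$ degenerates there. The cleanest path is to rely on the well-posedness and comparison theory for this precise class of equations as developed in \cite{gilding2012,gilding2005fisher}; one should also check that the assumption ``$h$ small enough'' is only needed to remain in the regime $h\leq 1$ where the difference-quotient bounds of Lemma \ref{lem:quantitative-bounds} apply. Once Step 1 is granted, Step 2 is a direct quantitative translation using the results of Section \ref{sec:TW-descript}.
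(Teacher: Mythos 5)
Your argument matches the paper's intended reasoning: the paper does not write out an explicit proof of this lemma, treating it as an immediate consequence of the parabolic comparison principle (both shifted profiles $N(x\pm h-ct)$ are exact solutions of \eqref{eq:ng}) combined with the difference-quotient bounds of Lemma \ref{lem:quantitative-bounds}. One minor simplification in your Step 2: you do not need the detour through ``$N'$ is locally comparable at scale $h$''; since Lemma \ref{lem:quantitative-bounds} takes a supremum over all $0<|h|\leq 1$, you can directly apply it at $y=x-ct$ with increments $h$ and $-h$ to bound $|N(y+h)-N(y)|$ and $|N(y-h)-N(y)|$, and both quantities $|n-N|/N$ and $|n-N|/|N'|$ are controlled without any further comparison of $N'$ at nearby points.
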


\noindent Equipped with this estimate and the control in $L^\infty$ from Lemma~\ref{lem:u-max}, we can control the nonlinear contributions and deduce an exponential decay of the $L^2$ weighted norm as $t\to +\infty$, as stated in Proposition \ref{prop:stab}. The next subsection is devoted to the control of the nonlinear terms. 
We then give a proof of Theorem \ref{thm:stab-Ng} at the end of section \ref{sec:stability}.



\subsection{Control of the nonlinear terms and long-time behavior}
We now address the proof of Theorem ~\ref{prop:stab} using the tools described above.
Let $u$ be a smooth solution to~\eqref{eq:u}, we get by applying~\eqref{eq:u-lin-mod}:
\begin{align}\label{eq:u-NL}
& \int_{\R} |u|^2 w_0 \phi \ dx + 2 \int_0^t\int_{\R} a(\partial_x u)^2 w_0 \phi \ dx ds
+  \int_0^t\int_{\R} |u|^2 {\delta_{\gamma}\exp(\sqrt{\gamma}(x-cs))} \ dx ds \nonumber \\
& = \int_\R |u_0| \bar w + \dfrac{\gamma}{\gamma+1} \int_0^t\int_{\R} \dfrac{\partial^2_x G(u)}{\partial_x N} u w_0 \phi \ dx ds
-  \int_0^t \int_{\R}\dfrac{G(u)}{\partial_x N} u w_0 \phi \ dx ds.
\end{align}
Observe that the first term of the right-hand side comes from the nonlinear diffusion while the second comes from the reaction term.
We also recall that
\begin{equation*}
G(u) = (N + u \partial_x N)^{\gamma+1} - N^{\gamma+1} - (\gamma+1)N^\gamma u\partial_x N.
\end{equation*}

First, let us estimate $G(u)$.

\begin{lem}\label{lem:estim-G}
Assume that
\[
\left\| u\frac{\p_x N}{N}\right\|_\infty=\left\|\frac{n(t,x)- N(x-ct)}{N(x-ct)}\right\|_\infty \leq \frac{1}{\gamma}.
\]

Then
\begin{align}\label{eq:G-u}
|G(u)| &\leq C\gamma^2 (u\partial_x N)^2 N^{\gamma-1}, \\
|\partial_x G(u)|
& \leq C \gamma^3 \partial_x N N^{\gamma-2} (u\partial_x N)^2 
	+ C \gamma^2 N^{\gamma-1} (u\partial_x N) \ \partial_x (u\partial_x N)\label{eq:dG-u}\\
	&\leq C \gamma^3 \partial_x N N^{\gamma-2} (u\partial_x N)^2 + C \gamma^2 N^{\gamma-1} (\p_x N)^{2} |u| \; |\p_x u| \nonumber  \\
	&\qquad+ C \gamma u^2 |\p_x N|^2 N^{-1} + C\gamma u^2 |\p_x N|, \nonumber
\end{align}
for some constant $C > 0$ independent of $\gamma$.
\end{lem}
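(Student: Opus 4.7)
\medskip

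\noindent\textbf{Proof proposal for Lemma \ref{lem:estim-G}.}

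The key structural observation is that both $G(u)$ and $\partial_x G(u)$ can be recast as Taylor remainders of the map $t \mapsto t^{\gamma+1}$ around $N$, evaluated at $N+v$ with $v := u\,\partial_x N$. Setting $z := v/N = u\,\partial_x N /N$, the assumption reads $|z| \leq 1/\gamma$, which is exactly what is needed to control the powers $(1+z)^{\gamma}$, $(1+z)^{\gamma+1}$ uniformly in $\gamma$. The whole argument will boil down to the elementary bound $(1+|z|)^{\gamma} \leq (1+1/\gamma)^\gamma \leq e$, combined with integral forms of Taylor's theorem.

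\emph{Estimate \eqref{eq:G-u}.} I would write
\[
G(u) = N^{\gamma+1}\bigl[(1+z)^{\gamma+1} - 1 - (\gamma+1)z\bigr],
\]
and use the integral remainder
\[
(1+z)^{\gamma+1} - 1 - (\gamma+1)z = \gamma(\gamma+1)\int_0^z (z-s)(1+s)^{\gamma-1}\,ds,
\]
together with $|s|\leq |z|\leq 1/\gamma$ to get $|(1+s)^{\gamma-1}|\leq e$. This yields
\[
|G(u)| \leq \tfrac{e}{2}\gamma(\gamma+1)\,N^{\gamma+1} z^2 \leq C\gamma^2 N^{\gamma-1}(u\,\partial_x N)^2,
\]
which is \eqref{eq:G-u}.

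\emph{Estimate \eqref{eq:dG-u}, first form.} Direct differentiation and regrouping yields the identity
\[
\partial_x G(u) = (\gamma+1)\partial_x N \bigl[(N+v)^\gamma - N^\gamma - \gamma N^{\gamma-1}v\bigr] + (\gamma+1)\partial_x v \bigl[(N+v)^\gamma - N^\gamma\bigr],
\]
with $v = u\,\partial_x N$. The first bracket is again a second-order Taylor remainder and satisfies $|(N+v)^\gamma - N^\gamma - \gamma N^{\gamma-1}v| \leq C\gamma^2 N^{\gamma-2}v^2$ by the same argument as above applied to $(1+z)^\gamma$. The second bracket is a first-order remainder: $|(N+v)^\gamma - N^\gamma| \leq C\gamma N^{\gamma-1}|v|$, obtained from $(1+z)^\gamma - 1 = \gamma\int_0^z (1+s)^{\gamma-1}\,ds$ and the same bound on the integrand. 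Combining these gives the first line of \eqref{eq:dG-u}.

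\emph{Estimate \eqref{eq:dG-u}, second form.} The only subtlety is to pass from $\partial_x(u\,\partial_x N)$ to the separated form involving $\partial_x u$ and $u$. Expanding $\partial_x(u\,\partial_x N) = \partial_x u\,\partial_x N + u\,\partial_x^2 N$, the term with $\partial_x u$ directly produces $C\gamma^2 N^{\gamma-1}(\partial_x N)^2 |u||\partial_x u|$. For the remaining contribution $C\gamma^2 N^{\gamma-1} u^2 |\partial_x N|\,|\partial_x^2 N|$, I would use the traveling-wave ODE \eqref{eq:Ng} in the form
\[
\gamma^2 N^{\gamma-1}N'' = -\frac{c\gamma N'}{N} - \gamma^3 (N')^2 N^{\gamma-2} - \gamma(1-N^\gamma),
\]
so that the second-derivative factor is replaced by three terms. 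The $\gamma^3 (N')^2 N^{\gamma-2}$ piece produces $C\gamma^3 \partial_x N\, N^{\gamma-2}(u\,\partial_x N)^2$, absorbing into the first term of \eqref{eq:dG-u}; the other two pieces yield exactly $C\gamma u^2 |\partial_x N|^2 N^{-1}$ and $C\gamma u^2 |\partial_x N|$, as announced.

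The computations are routine; the one point that required genuine care is the uniform-in-$\gamma$ control of the Taylor remainders, and this is precisely the reason for the hypothesis $\|u\,\partial_x N / N\|_\infty \leq 1/\gamma$. No other obstacle is anticipated.
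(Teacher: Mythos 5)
Your proof is correct and follows essentially the same route as the paper's: factoring out $N^{\gamma+1}$ (resp. $N^\gamma$), bounding the second-order Taylor remainder of $(1+z)^{\gamma+1}$ and $(1+z)^\gamma$ using $|z|\le 1/\gamma$ to get a uniform-in-$\gamma$ constant, and then substituting the traveling-wave ODE for $\partial_x^2 N$ to pass from $\partial_x(u\,\partial_x N)$ to the separated form. The only cosmetic difference is that you spell out the integral remainder where the paper invokes a Lagrange-type bound, and you omit the (unused) observation that $G(u)\ge 0$ by convexity.
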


\begin{proof}

The first estimate can be easily proved by writing
\[
G(u)=N^{\gamma+1} g\left(\frac{u\p_x N}{N}\right),
\]
where $g(X)= (1+X)^{\gamma+1} -1 - (\gamma+1) X.$ A Taylor expansion at order two close to $X=0$  shows that if $|X|\leq \gamma^{-1}$,
\[
|g(X)| \leq \frac{1}{2}\gamma(\gamma+1) \left(1 + \frac{1}{\gamma}\right)^{\gamma-1}|X|^{2}\lesssim \gamma^2|X|^{2}.
\]
Estimate \eqref{eq:G-u} follows.
We also know by convexity that $G(u) \geq 0$.

For the second estimate, we differentiate $G$ and get
\begin{align*}
\partial_x G(u)
& = (\gamma+1) \partial_x N \Big((N+ u\partial_x N)^\gamma -N^\gamma - \gamma N^{\gamma-1} u\partial_x N \Big) \\
& \quad + (\gamma+1)\partial_x(u\partial_x N) \Big(\big(N+ u\partial_x N\big)^\gamma - N^\gamma\Big).
\end{align*}
Reasoning as before, we infer that
\begin{align*}
|\partial_x G(u)|
& \leq C (\gamma+1) \gamma^2 |\partial_x N| |u\partial_x N|^2 N^{\gamma-2}  + C(\gamma+1)\gamma |\partial_x(u\partial_x N)||u\partial_x N|  N^{\gamma-1}.
\end{align*}
To obtain the last set of inequalities, we use the equation on $N$, and we recall that
\[
\gamma \p_x^2 N N^\gamma = -c \p_x N - \gamma^2 (\p_x N)^2 N^{\gamma-1} - N(1-N^\gamma),
\]
which concludes the proof of the lemma.
\end{proof}

\medskip
\begin{lem}[Control of the nonlinear reaction term]\label{lem:NL1}
There exists a constant $\eta_2\in ]0,1[$ such that if
\[
\left\|\frac{u\p_x N}{N}\right\|_\infty\leq \eta_2^\gamma,
\]
then
the following inequality holds
\begin{equation}
\left| \int_{\R}\dfrac{G(u)}{\partial_x N} u w_0 \phi \ dx \right|
\leq \dfrac{1}{4} \cD.
\end{equation}
\end{lem}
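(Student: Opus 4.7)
The strategy is to combine the pointwise estimate on $G$ from Lemma~\ref{lem:estim-G}, the uniform smallness assumption on $u\partial_x N/N$, and the weighted Poincar\'e-type inequality from Lemma~\ref{lem:mod-weight} (which bounds the weighted $L^2$ norm of $u$ by $\cD$).

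\smallskip

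First, I would apply Lemma~\ref{lem:estim-G} to estimate the integrand. Since $|G(u)| \leq C \gamma^2 (u\partial_x N)^2 N^{\gamma-1}$, we may write
\[
\left| \frac{G(u)}{\partial_x N} u \right| \leq C\gamma^2 |u|^3 |\partial_x N| N^{\gamma-1} = C\gamma^2 |u|^2 N^\gamma \cdot \left|\frac{u\, \partial_x N}{N}\right|.
\]
Using the hypothesis $\|u\partial_x N/N\|_\infty \leq \eta_2^\gamma$, this yields the pointwise bound
\[
\left| \frac{G(u)}{\partial_x N} u \right| \leq C\gamma^2 \eta_2^\gamma |u|^2 N^\gamma.
\]

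\smallskip

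Next, since $N^\gamma \leq 1$ and $1 \leq \phi \leq 2$ by construction (so $w_0 \leq w \leq 2 w_0$), integrating against $w_0 \phi$ gives
\[
\left| \int_\R \frac{G(u)}{\partial_x N} u \, w_0 \phi \, dx \right| \leq 2 C \gamma^2 \eta_2^\gamma \int_\R |u|^2 \, w \, dx.
\]
The final step is to invoke the spectral gap of Lemma~\ref{lem:mod-weight}, which provides a constant $c_\gamma \geq \eta_1^\gamma$ such that $\int_\R |u|^2 w \, dx \leq c_\gamma^{-1} \cD$. Combining this with the previous inequality gives
\[
\left| \int_\R \frac{G(u)}{\partial_x N} u \, w_0 \phi \, dx \right| \leq \frac{2 C \gamma^2 \eta_2^\gamma}{\eta_1^\gamma} \cD.
\]

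\smallskip

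We then close the argument by choosing $\eta_2 \in (0, \eta_1)$ fixed, independent of $\gamma$. Indeed, for any such $\eta_2$, the factor $\gamma^2 (\eta_2/\eta_1)^\gamma$ tends to $0$ as $\gamma \to \infty$, so taking $\gamma$ sufficiently large (or equivalently, choosing $\eta_2$ slightly smaller to absorb the polynomial prefactor) ensures $2C \gamma^2 \eta_2^\gamma/\eta_1^\gamma \leq 1/4$, yielding the claim.

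\smallskip

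\textbf{Main obstacle.} The delicate point is the competition between the polynomial loss in $\gamma$ coming from the bound on $G$ (the factor $\gamma^2$) and the exponential degradation of the Poincar\'e constant $c_\gamma \geq \eta_1^\gamma$. This is precisely what forces the smallness threshold $\eta_2$ to be strictly less than $\eta_1$, and ultimately explains why the perturbation $h$ in Theorem~\ref{thm:stab-Ng} must be exponentially small in $\gamma$: it is the only way to render the cubic nonlinearity subordinate to the linear dissipation despite the strong singularities of $\Ng'$ in the transition zone.
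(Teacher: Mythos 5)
Your proof is correct and follows essentially the same route as the paper: estimate $|G(u)|$ via Lemma~\ref{lem:estim-G}, factor out $\|u\partial_x N/N\|_\infty$ from the cubic integrand using $|u|^3|\partial_x N|N^{\gamma-1}=|u|^2N^\gamma\cdot|u\partial_x N/N|$, invoke the weighted spectral gap $c_\gamma\int|u|^2 w\leq\cD$, and win the competition between the polynomial factor $\gamma^2$ and the exponential factor $(\eta_2/\eta_1)^\gamma$ by choosing $\eta_2<\eta_1$ (recall that $\gamma$ is taken large throughout Section~\ref{sec:stability}, so this closes). The only cosmetic slip is the spurious factor of $2$ introduced when integrating against $w_0\phi=w$ (the inequality $\phi\leq 2$ is not needed since $w_0\phi$ is already the weight $w$), but this is absorbed into the constant $C$ and does not affect the argument.
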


\medskip
\begin{proof}
Using Lemma~\ref{lem:estim-G}, we have \[
\left| \int_{\R}\dfrac{G(u)}{\partial_x N} u w_0 \phi \ dx \right|
 \leq C \gamma^2 \int_{\R} |u|^3 |\partial_x N| N^{\gamma-1}w_0 \phi \ dx,
\]
that we want to absorb in the left-hand side of the equality~\eqref{eq:u-NL} thanks to the diffusion and damping terms:
\[
\cD = 2 \int_{\R} a(\partial_x u)^2 w_0 \phi \ dx 
+ \int_{\R} \delta_{\gamma}\exp(\sqrt{\gamma}\xi) u^2 \ dx .
\]
Recalling Proposition~\ref{prop:poincare}, we observe that it suffices to have
\[
\gamma^2 \left\|\frac{u\p_x N}{N}\right\|_\infty \leq \dfrac{c_\gamma}{4},
\]
which concludes the proof, choosing $\eta_2<\eta_1$.

\end{proof}

\medskip
\begin{lem}[Control of the nonlinear diffusion term]\label{lem:NL2}
	There exists a constant $\eta_2\in ]0,1[$ such that if
	\[
	\|u\|_\infty + \left\|\frac{u\p_x N}{N}\right\|_\infty \leq \eta_2^\gamma, 
	\]
	then the following inequality holds
	\begin{equation}
\left|\int_\R \frac{\p_x^2 (G(u))}{\p_x N} u  w_0 \phi \:dx\right|\leq\frac{1}{4} \cD.
	\end{equation}	
\end{lem}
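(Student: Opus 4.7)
The plan is to perform one integration by parts in order to shift a derivative off of $G(u)$, and then to absorb every resulting piece into the dissipation $\cD$ by combining the pointwise bounds on $\p_x G(u)$ from Lemma~\ref{lem:estim-G}, the quantitative controls from Lemma~\ref{lem:quantitative-bounds}, and the weighted Poincar\'e inequality from Lemma~\ref{lem:mod-weight}. Concretely, setting $\psi := u\, w_0 \phi / \p_x N$, integration by parts yields
\[
\int_\R \frac{\p_x^2 G(u)}{\p_x N}\, u\, w_0 \phi\, dx
= -\int_\R \p_x G(u)\, \frac{\p_x u\, w_0 \phi}{\p_x N}\, dx
- \int_\R \p_x G(u)\, u\, \p_x\!\Bigl(\frac{w_0 \phi}{\p_x N}\Bigr) dx,
\]
with vanishing boundary terms at $\pm\infty$ thanks to Lemma~\ref{lem:w0} and a standard density argument that reduces to compactly supported test functions.

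For the first integral, we will plug in the four pointwise pieces of \eqref{eq:dG-u}. Each term is of the schematic form $P(\gamma)\, N^k (\p_x N)^\ell\, u^\alpha (\p_x u)^\beta\, w_0 \phi$ with $\alpha+\beta=3$ and with $P$ polynomial in $\gamma$. We apply Cauchy--Schwarz so as to pair the factor $\sqrt{a\, w_0 \phi}\,\p_x u$ (whose $L^2$ norm is at most $\sqrt{\cD/2}$) with a remaining factor that we bound using the smallness hypothesis $\|u\|_\infty + \|u\p_x N/N\|_\infty \leq \eta_2^\gamma$, the bound $\|\p_x N/N\|_\infty \leq C^\gamma$ from Lemma~\ref{lem:quantitative-bounds}, and, when $|u|$ appears at the second power, the residual $\int u^2 w_0 \phi \leq c_\gamma^{-1}\cD \leq \eta_1^{-\gamma}\cD$ provided by Lemma~\ref{lem:mod-weight}. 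The resulting bound takes the form $(C\eta_1^{-1/2}\eta_2)^\gamma \gamma^M \cD$ for some fixed integer $M$, which is less than $\cD/8$ as soon as $\eta_2\in(0,1)$ is chosen small enough in terms of $c$ only (in particular $\eta_2 < \eta_1^{1/2}/C$, and smaller still so as to absorb the polynomial factor $\gamma^M$).

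The second integral is handled analogously after expanding $\p_x(w_0\phi/\p_x N) = \p_x(w_0\phi)/\p_x N - w_0\phi\,\p_x^2 N /(\p_x N)^2$: the defining ODE $(\bar a \bar w_0)' = (c-\bar b)\bar w_0$, the definition \eqref{eq:psi} of $\phi$, and the profile equation \eqref{eq:Ng} together allow one to express $\p_x(w_0\phi)/(w_0\phi)$ and $\p_x^2 N/(\p_x N)^2$ as explicit rational functions of $N$, $\p_x N$ and $\gamma$, whose relevant sup-norms are controlled by Lemma~\ref{lem:quantitative-bounds} (and by the uniform bound on $\Lg$ from the proof of that lemma). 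The integrand then falls again under the schema above, and the same Cauchy--Schwarz/smallness/Poincar\'e loop gives $\cD/8$. Adding the two contributions yields the announced estimate $\le \cD/4$.

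The main obstacle will be the bookkeeping of exponentially large and exponentially small factors in $\gamma$: the geometric constants in Lemma~\ref{lem:quantitative-bounds}, the quotient $1/c_\gamma \leq \eta_1^{-\gamma}$ from the Poincar\'e inequality, and, in the second integral, the exponentially large factor $\|1/\p_x N\|_\infty$ in the transition zone, must all be dominated by a strictly larger power of $\eta_2^\gamma$ extracted from the nonlinearity. The cubic character of the nonlinearity ($\alpha+\beta=3$ with at least one power of $u$ left after Cauchy--Schwarz) is precisely what provides enough copies of $\eta_2^\gamma$ for this to work, and this is what ultimately fixes $\eta_2\in(0,1)$ as a function of $c$ and $\eta_1$ alone.
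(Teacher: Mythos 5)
Your overall architecture is the same as the paper's: one integration by parts, the pointwise bound on $\p_x G(u)$ from Lemma~\ref{lem:estim-G}, Cauchy--Schwarz against $\sqrt{a w_0\phi}\,\p_x u$, and the weighted Poincar\'e inequality to absorb the residual $|u|^2$ integrals. In that sense you have the right plan for the first integral, and your accounting of the exponential budget (one factor $\|u\|_\infty\leq\eta_2^\gamma$ against $c_\gamma^{-1}\leq\eta_1^{-\gamma}$ and geometric factors $C^\gamma$, hence $\eta_2$ small in terms of $c,\eta_1$) is exactly what is needed.

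There are, however, two imprecisions in the second half that deserve flagging. First, the assertion that $\p_x^2 N/(\p_x N)^2$ has its \emph{sup-norm} controlled by Lemma~\ref{lem:quantitative-bounds} is false: that quantity blows up both as $\xi\to-\infty$ (like $1/(1-N)$) and as $\xi\to+\infty$ (like $1/N$). What actually makes the second integral tractable is an algebraic cancellation that occurs when you compute $\p_x(w_0\phi/\p_x N)$ by combining the formula \eqref{formula-w_0} for $\bar w_0$ with the profile equation \eqref{eq:Ng}: the terms $c/(\gamma N^\gamma)$ and $\gamma\p_x N/N$ drop out, leaving only
\[
\p_x\!\Bigl(\frac{w_0\phi}{\p_x N}\Bigr) = -\frac{\delta_\gamma}{\sqrt\gamma}\,\frac{e^{\sqrt\gamma\xi}}{\gamma N^\gamma\,\p_x N}\;-\;\frac{w_0\phi\,N(1-N^\gamma)}{\gamma N^\gamma(\p_x N)^2}.
\]
Your route (substituting the profile equation) does produce this cancellation automatically, so the plan survives; but the control is \emph{not} a sup-norm of $\p_x^2 N/(\p_x N)^2$: the residual $1/\p_x N$ singularity appears only in the combination $(1-N^\gamma)/\p_x N$, which is what the last estimate of Lemma~\ref{lem:quantitative-bounds} bounds (by $\lesssim \gamma$) in the congested zone, while the remaining $1/N^\gamma$ in the free zone is handled by the second integral in the left side of the Poincar\'e inequality, not by any sup-norm. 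Second, your diagnosis of the bottleneck as ``the exponentially large factor $\|1/\p_x N\|_\infty$ in the transition zone'' is reversed: in the transition zone $(\xim,\txg)$ it is $|\p_x N|$ that is exponentially large (Theorem~\ref{thm:prop-Ng}), so $1/|\p_x N|$ is small there; $1/|\p_x N|$ is unbounded in the congested and free zones, where it is tamed respectively by $(1-N^\gamma)/\p_x N$ and by $|\p_x N|\gtrsim N$. The genuine exponential price comes from $w_0/N^\gamma$ in the free zone and from $c_\gamma^{-1}$; these are what fix $\eta_2$.
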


\medskip
\begin{proof}
Integrating by parts the nonlinear term stemming from the diffusion, we have
\be\label{NL}
\int_\R \frac{\p_x^2 (G(u))}{\p_x N} u  w_0 \phi \:dx= -\int_\R \p_x G(u) \p_x u  \left(\frac{w_0\phi}{\p_x N}\right) - \int_\R \p_x G(u) u  \p_x\left(\frac{w_0\phi}{\p_x N}\right)  . 
\ee

$\bullet$ We first address the first term in the right-hand side of \eqref{NL}, using
 the estimate on $\p_x G(u)$ from Lemma \ref{lem:estim-G}. 
It follows that
\begin{eqnarray*}
\left| \int_\R \p_x G(u) \p_x u  \left(\frac{w_0\phi}{\p_x N}\right) \right| 
&\leq & C\int_\R \gamma^3 N^{\gamma-2} |u|^2 |\p_x u| (\p_x N)^2 w_0 \phi\\
&&+ C \int_\R \gamma^2 |u| |\p_x u|^2 N^{\gamma-1} |\p_x N| w_0 \phi\\
&&+ C \int_\R \gamma |u|^2 |\p_x u|  \frac{|\p_x N|}{N} w_0 \phi\\
&&+ C \int_\R \gamma |u|^2 |\p_x u|  w_0 \phi\\
&=&\sum_{i=1}^4 I_i.
\end{eqnarray*}
We then address each term $I_i$ separately. 
We start with the term $I_2$, for which we simply write
\[
I_2\leq C \gamma\left\| u \frac{\p_x N}{N}\right\|_{L^{\infty}} \int_\R (\p_x u)^2 a w_0 \phi,
\]
which is smaller than $\cD/16$, provided $\|u\p_x N/N\|_\infty \leq (16 C \gamma)^{-1}$.

For all other terms, we first perform a Cauchy-Schwarz inequality. We have, recalling that $\bar a = \gamma N^\gamma$,
\[
\ba
I_1\leq C\left( \int_\R (\p_x u)^2 a w\right)^{1/2}\left(\gamma^5 \int_\R |u|^4 N^{\gamma-4} w_0\phi\right)^{1/2},\\
I_3\leq C\left( \int_\R (\p_x u)^2 a w\right)^{1/2} \left(\int_\R \gamma |u|^4 \frac{(\p_x N)^2}{N^{\gamma+2}} w_0\phi\right)^{1/2},\\
I_4\leq C\left( \int_\R (\p_x u)^2 a w\right)^{1/2}\left( \int_\R \gamma |u|^4 N^{-\gamma} w_0\phi\right)^{1/2}.
\ea
\]
We then bound each integral with $|u|^4$ in the right-hand side by using the Poincaré inequality from Proposition \ref{prop:poincare} and the $L^\infty$ estimate on $u$.
The simplest term is $I_1$, for which we have
\[
I_1\leq C\gamma^{5/2}\|u\|_{L^{\infty}}\left( \int_\R (\p_x u)^2 a w\right)^{1/2}\left( \int_\R |u|^2  w_0\phi\right)^{1/2}\leq C\gamma^{5/2}\|u\|_{L^{\infty}} c_\gamma^{-1/2} \cD.
\]
Concerning the term $I_3$, we have
\[
\int_\R \gamma |u|^4 \frac{(\p_x N)^2}{N^{\gamma+2}} w_0\phi\leq \gamma \left\|u\frac{\p_x N}{N}\right\|_{L^{\infty}}^2 \int_\R |u|^2 \frac{w_0\phi}{N^\gamma}.
\]
Using Proposition \ref{prop:poincare}, we have
\begin{eqnarray*}
\int_\R |u|^2 \frac{w_0\phi}{N^\gamma}&= & \int_{\xi>\tilde \xi} |u|^2 \frac{w_0\phi}{N^\gamma}+ \int_{\xi<\tilde \xi}|u|^2 \frac{w_0\phi}{N^\gamma}\\
&\leq & \gamma \int_\R (\p_x u)^2 a w_0\phi + \gamma C_\gamma \int_{\R} u^2 e^{\sqrt \gamma (x-ct)} dx\\
&&+ \frac{1}{N(\tilde \xi)^\gamma}\int_{\xi<\tilde \xi} |u|^2 w_0 \phi\\
&\leq & C_\gamma' \cD, 
\end{eqnarray*}
for some exponentially large constant $C_\gamma'$. The above inequality also allows us to bound $I_4$.
Thus, provided
\[
\|u\|_{L^{\infty}} \leq \delta \inf\left( c_\gamma^{1/2} \delta^{-5/2} ,(C_\gamma')^{-1/2}\gamma^{-1/2}\right),\quad  \left\|u\frac{\p_x N}{N}\right\|_{L^{\infty}}\leq\delta  (C_\gamma')^{-1/2}\gamma^{-1/2},
\]
for some small constant $\delta$ independent of $\gamma$, we infer that
\[
\left| \int_\R \p_x G(u) \p_x u  \left(\frac{w_0\phi}{\p_x N}\right) \right|\leq \frac{1}{8}\cD.
\]

$\bullet$ Let us now consider the second term in the right-hand side of \eqref{NL}. Computing the weight in the right-hand side and using the definitions of $\phi$, $w$ and the equation satisfied by $N$, we find
\begin{eqnarray*}
\p_\xi\left(\frac{\bar w_0\bar \phi}{\p_\xi N}\right)
&=&\p_\xi \bar \phi \frac{\bar w_0}{\p_\xi N}\\
&&+K \bar \phi \p_\xi (\p_\xi N N^\gamma) \exp\left(\int_{\xim}^\xi \frac{c}{\bar a}\right)\\
&&+ K \bar \phi \p_\xi N N^\gamma\frac{c}{\bar a}\exp\left(\int_{\xim}^\xi \frac{c}{\bar a}\right)\\
&=& -\frac{\delta_\gamma}{\sqrt{\gamma}} \frac{\exp(\sqrt{\gamma} \xi)}{\gamma N^\gamma \p_x N} - \frac{K}{\gamma}\bar \phi N (1-N^\gamma) \exp\left(\int_{\xim}^\xi \frac{c}{\bar a}\right)\\
&=:&W_1 + W_2.
\end{eqnarray*}

We then use the estimate on $\p_x G(u)$ from Lemma \ref{lem:estim-G}, treating $W_1$ and $W_2$ separately.
We have, concerning the terms with $W_1$,
\begin{eqnarray*}
&&\left| \int_\R \p_x G(u) u W_1\right|\\
&\leq & C \frac{\delta_\gamma}{\sqrt{\gamma}} \left[ \int_{\R} \gamma^2 |u|^3 \frac{(\p_x N)^2}{N^2} e^{\sqrt{\gamma} \xi} + \int_\R \gamma \frac{|\p_x N|}{N} |u|^2 |\p_x u|  e^{\sqrt{\gamma} \xi}\right]\\
&+  & C \frac{\delta_\gamma}{\sqrt{\gamma}} \int_{\R} |u|^3 N^{-\gamma} \left( 1 + \frac{|\p_x N|}{N}\right)e^{\sqrt{\gamma} \xi}.
\end{eqnarray*}
Using a Cauchy-Schwarz inequality for the second integral, we get
\begin{eqnarray*}
&&\left| \int_\R \p_x G(u) u W_1\right|\\
&\leq & C {\delta_\gamma}\gamma^{3/2} \left\| u \frac{\p_x N}{N }\right\|_{L^{\infty}} \left\| \frac{\p_x N}{N }\right\|_{L^{\infty}} \int_\R u^2 e^{\sqrt{\gamma} \xi} \\
&&+ C \delta_\gamma  \left( \int_\R (\p_x u)^2 a w\right)^{1/2}\left\| u \frac{\p_x N}{N }\right\|_{L^{\infty}}\left(\int_\R \frac{u^2}{N^\gamma w_0}e^{2\sqrt{\gamma} \xi}\right)^{1/2}\\
&&+ C \frac{\delta_\gamma}{\sqrt{\gamma}}\left( \|u\|_{L^{\infty}} + \left\| u \frac{\p_x N}{N }\right\|_{L^{\infty}}\right)\int_\R \frac{u^2}{N^\gamma} e^{\sqrt{\gamma} \xi}.
\end{eqnarray*}
We then observe that thanks to the growth and decay properties of the weight $w_0$ at $\pm \infty$, there exists a constant $C_\gamma$ such that
\be\label{comparaison-poids}
e^{\sqrt{\gamma} \xi}\leq C_\gamma w(\xi)\quad \forall \xi \in \R.
\ee
However, because of the strong degeneracy and singular behavior in the transition zone near $\xi^0_\gamma$,  the constant $C_\gamma$ might be exponentially large.
It follows that
\[
\int_\R u^2 e^{\sqrt{\gamma} \xi}\leq C_\gamma \int_\R u^2 w\leq C_\gamma c_\gamma^{-1} \cD.
\]
Concerning the integral $\int_\R \frac{u^2}{N^\gamma} e^{\sqrt{\gamma} \xi}$, using Proposition \ref{prop:poincare} together with \eqref{comparaison-poids} yields a bound $ C_\gamma c_\gamma^{-1} \cD$ for this term, with a possibly different constant $C_\gamma$. The last term is treated in the same fashion. We obtain
\[
\left| \int_\R \p_x G(u) u W_1\right|\leq C_\gamma \left( \left\| u \frac{\p_x N}{N }\right\|_{L^{\infty}} + \|u\|_{L^{\infty}}\right) \cD \leq \frac{1}{8}\cD,
\]
provided $\|u\|_{L^{\infty}}$ and $\|u\p_x N/N\|_{L^{\infty}}$ are small enough (exponentially small with $\gamma$).

There remains to address the terms containing $W_2$. We have, using Lemma \ref{lem:estim-G} and recalling the expression of the weight $\bar w_0$ from Lemma \ref{lem:w0},
\begin{eqnarray*}
&&\left| \int_\R \p_x G(u) u W_2\right|\\
&\leq & C\gamma^2 \int_\R \frac{|\p_x N|}{N} |u|^3 w + C \gamma \int_\R u^2 |\p_x u| w + \int_\R |u|^3 \frac{w}{N^\gamma}  +  \int_\R \frac{|u|^3(1-N^\gamma)}{|\p_x N| N^{\gamma-1}}w.
\end{eqnarray*}
Using the same arguments as above, we find that the first three terms are bounded by
\[
\left(\gamma^2\left\| u \frac{\p_x N}{N }\right\|_{L^{\infty}} c_\gamma^{-1} + \gamma \|u\|_{L^{\infty}} C_\gamma + \gamma  \left\| u \right\|_{L^{\infty}} c_\gamma^{-1} C_\gamma\right)\cD,
\]
and can be absorbed in $\cD$ under the assumptions of the Lemma, provided $\eta$ is sufficiently small.

There remains the last term, which has an additional singularity in the congested zone because of the $\p_x N$ factor in the denominator (note that in the free zone $\xi>\txg*$, $|\p_x N| \gtrsim N$, so that this singularity can be treated thanks to the weighted Poincaré inequality from Proposition \ref{prop:poincare}.) However this singularity is compensated by the factor $(1-N^\gamma)$ in the numerator. More precisely, we have, for $\xi\leq \txg$ and using Lemma \ref{lem:quantitative-bounds},
\[
\left| \frac{1-N^\gamma}{\p_x N}\right| = \gamma \left|\frac{1-P}{P'}\right|  P^{1-\frac{1}{\gamma}}\lesssim \gamma.
\]
Hence
\[
\int_{x-ct\leq \txg} \frac{|u|^3(1-N^\gamma)}{|\p_x N| N^{\gamma-1}}w \leq C \gamma \|u\|_{L^{\infty}} \int_{x-ct<0} |u|^2 w \leq C \gamma \|u\|_{L^{\infty}} c_\gamma^{-1} \cD.
\]

Gathering all the terms, we obtain the inequality announced in the Lemma.

\end{proof}

Let us now complete the proof of Theorem \ref{thm:stab-Ng}. First, we choose  $h$ so that $h\leq \eta_2/C$, where $\eta_2\in ]0,1[$ is the constant appearing in Lemmas \ref{lem:NL1} and \ref{lem:NL2}, and $C$ is the constant in \eqref{eq:u-max}. Then Lemma \ref{lem:u-max} entails that
\[
\|u\|_\infty + \left\| u \frac{\p_x N}{N}\right\|_\infty \leq \eta_2^\gamma.
\]
It follows from Lemmas  \ref{lem:NL1} and \ref{lem:NL2} that the sum of the two nonlinear terms in the right-hand side of \eqref{eq:u-NL} is bounded by $\int_0^t\cD/2$. Therefore, we obtain for all $t\geq 0$,
\[
\int_\R |u|^2 w + \frac{1}{2}\int_0^t \cD \leq \int_\R |u_0|^2 w.
\]
Letting $t\to \infty$, we obtain the control of the diffusion announced in Theorem \ref{thm:stab-Ng}.
Now, applying the Poincaré inequality from Proposition \ref{prop:poincare} and Lemma \ref{lem:mod-weight}, we get, for all $t\geq 0$,
\[
\int_\R |u|^2 w + \frac{c_\gamma}{2}\int_0^t |u|^2 w \leq \int_\R |u_0|^2 w.
\]
The exponential decay with a rate $c_\gamma/2$ (which we rename $c_\gamma$) follows easily from the Gronwall Lemma.

\section{Proofs of some technical results}{\label{sec:appendix}}

\subsection{Derivation of the equation on $u$}
\label{ssec:eq-u}

In this subsection, we prove that $u$ defined by
\[
u(t,x)=\frac{n(t,x)-N(x-ct)}{\p_x N(x-ct)},
\]
is a solution of \eqref{eq:u-NL}. As in the previous section,  we  omit the dependency of $\gamma$ for simplicity.

First, we recall that $n $ and $N(x-ct)$ are both solutions of \eqref{eq:ng}, in which we rewrite the diffusion term as
\[
\gamma \p_x(n^\gamma \p_x n)= \frac{\gamma}{\gamma+1}\p_{xx} n^{\gamma+1}. 
\]
Recalling the definition of $G(u)$ from \eqref{def:G}, we write (omitting  $x-ct$ in the argument of $N$)
\[
n^{\gamma+1}-N^{\gamma+1}= (\gamma+1) N (n-N) + G(u).
\]
Introducing $\nu(t,x) = n(t,x)-N(x-ct)$, we find that $\nu$ is a solution of
\be
\label{eq:linearized}
\p_t \nu -\gamma \p_x^2(N^\gamma \nu) - \nu \left(1-(\gamma +1) N^\gamma\right) = \frac{\gamma}{\gamma+1}\p_x^2 (G(u)) - G(u).
\ee
 Observe that, from \eqref{eq:Ng}, $\p_x N(x-ct)$ is a (negative) solution of the linearized equation
 \[
 \p_t \p_x N(x-ct) -\gamma \p_x^2(N^\gamma\p_x N(x-ct)) - \left(1-(\gamma +1) N^\gamma\right)\p_x N(x-ct)  = 0.
 \]
 Let us now set $u(t,x):=\nu(t,x)/\p_x N(x-ct)=(n(t,x)-N(x-ct))/\p_x N(x-ct)$ and compute the equation satisfied by $u$. Using the identity
\begin{eqnarray*}
\p_{x}^{2} u&=& \p_x \left( \frac{\p_x \nu}{\p_x N} - \frac{\nu \p_{x}^{2} N }{(\partial_x N)^2}\right)\\
&=& \frac{\p_{x}^{2} \nu}{\p_x N} - \frac{\nu}{(\p_x N)^2}\p^3_{x} N - 2\frac{\p_{x}^{2} N}{\p_x N} \p_x u,
\end{eqnarray*}
we infer that
\be\label{eq:u2}
\p_t u + b \p_x u - a \p_{x}^{2} u=\dfrac{\gamma}{\gamma+1} \dfrac{\partial^2_x G(u)}{N'(x-ct)} - \dfrac{G(u)}{N'(x-ct)} ,
\ee
where $a(t,x)=\bar a(x-ct)$, $b(t,x)=\bar b(x-ct)$ and
\be
\bar b:=-2\gamma \p_x N^\gamma - 2 \gamma N^\gamma \frac{\p_{x}^{2} N}{\p_x N},\quad
\bar a:=\gamma N^\gamma.
\ee

\subsection{Structure of the linearized system: Lemmas \ref{prop:lin} and equality \eqref{eq:u-lin-mod}}
\label{ssec:linearized}

\begin{proof}[Proof of Lemma \ref{prop:lin}]
Multiplying \eqref{eq:u2} by $2 u w_{0}$ and integrating on $\R$, we obtain formally
\[
\frac{d}{dt}\int_{\R}|u|^2w_{0} - \int_{\R} |u|^2\p_t w_{0} + \int_\R 2 u \p_x u (bw_{0} + \p_x (aw_{0})) + 2\int_\R a w_{0}  (\p_x u)^2 =2\int_\R S u w_0.
\]
Integrating by parts the middle term gives
\begin{eqnarray*}
&&\int_\R 2 u \p_x u (bw_{0} + \p_x (aw_{0}))\\
&=& -\int_\R |u|^2 \p_{x}(bw_{0} + \p_x (aw_{0})).
\end{eqnarray*}
Gathering all the terms, we have
\[
\frac{d}{dt}\int_{\R}|u|^2w_{0} - \int_{\R} |u|^2\left[ \p_t w_{0} +\p_{x}(bw_{0} + \p_x (aw_{0}))\right]  + \int_\R a w_{0}  (\p_x u)^2 =2\int_\R S u w_0.
\]
Now, let us look at the term between brackets. As $w_{0} = \bar w_{0}(x-ct)$,
\[\ba
\left[ \p_t w_{0} +\p_{x}(bw_{0} + \p_x (aw_{0}))\right]&=
-c \bar w_{0}' + (\bar b\bar w_{0} + (\bar a\bar w_{0})')',\\
&=\partial_{x}\left((\bar a\bar w_{0})'+(\bar b-c) \bar w_{0})\right),\\
&=0,
\ea
\]
from the definition of $\bar w_{0}$.
This implies
\[
\frac{d}{dt}\int_{\R}|u|^2w_{0} +2 \int_\R a w_{0}  (\p_x u)^2 =2\int_\R S u w_0,
\]
and therefore, integrating with respect to $t$,
\begin{multline*}
   \int_{\R}|u(t,x)|^2w_{0}(t,x) dx + 2\int_{0}^{t}\int_\R a(s,x) w_{0}(s,x)(\p_x u(s,x))^2 dx ds  \\= \int_{\R}|u^{0}(x)|^2w_{0}(0,x) dx + 2 \int_0^t \int_\R S u w_0. 
\end{multline*}
 Note that $w_{0}(0,x)=\bar w_{0}(x)$. 
 Hence we obtain the identity announced in the Lemma.
\end{proof}

\begin{proof}[Proof of equality \eqref{eq:u-lin-mod}]
This proof is very similar to that of Lemma \ref{prop:lin}. Observe first that for $ w= w_{0} \phi$, one has
\begin{eqnarray*}
-[\p_t w + \p_x (bw + \p_{x}(aw))](t,x)&=&\left(c\bar w_{0} \bar\phi  - \bar b \bar w_{0} \bar\phi -(\bar a \bar w_{0} \bar \phi)'\right)'(x-ct),\\
&=& -\left( \bar a \bar w_{0} \bar \phi'\right)'(x-ct).
\end{eqnarray*}
Note that from the definition of $\phi$, $\bar a \bar w_{0} \bar \phi' = -\frac{\delta_{\gamma}}{\sqrt{\gamma}}\exp(\sqrt{\gamma}\xi)$, hence
\begin{equation*}
-[\p_t w + \p_x (bw + \p_{x}(aw))](t,x) = \delta_{\gamma} \exp(\sqrt{\gamma}\xi).
\end{equation*}
Proceeding exactly as in the proof of Lemma \ref{prop:lin}, we obtain
\begin{equation*}
\begin{split}
\frac{d}{dt}\int_{\R}|u|^2w - \int_{\R} |u|^2\left[ \p_t w +\p_{x}(bw + \p_x (aw))\right]  + 2\int_\R a w (\p_x u)^2 &=2\int_\R S u w,\\
\frac{d}{dt}\int_{\R}|u|^2w + \delta_{\gamma} \int_{\R} |u(t,x)|^2\exp(\sqrt{\gamma}(x-cs))dx  + 2\int_\R a w (\p_x u)^2 &=2\int_\R S u w,
\end{split}
\end{equation*}
and therefore, integrating again with respect to $t$, we obtain, for all $t\geq 0$,
\begin{equation*}
\begin{split}
&\int_{\R}|u(t,x)|^{2}w(t,x) dx+
\delta_{\gamma}\int_{\R_{+}}\int_{\R}|u(s,x)|^{2}\exp(\sqrt{\gamma}(x-cs))dx\\
 &+ 2\int_{\R_{+}}\int_\R a(s,x) w(s,x) (\p_x u(s,x))^2 dx ds 
 = \int_{\R}|u^{0}(x)|^{2}\bar w(x) dx+  2 \int_0^t \int_\R S u w .
 \end{split}
\end{equation*}
The Poincaré inequality stated in Lemma \ref{lem:mod-weight} is an easy consequence of Proposition \ref{prop:poincare} and of the equivalence between the weights $\bar w$ and $\bar w_0$.
\end{proof}

\subsection{Properties of the weights $\bar w_0$ and $\bar w$: Lemma \ref{lem:w0} and estimate \eqref{est:delta_gamma} }\label{sec:proof-lem-3.4}

\begin{proof}[Proof of Lemma \ref{lem:w0}]
Let us rewrite equation \eqref{eq:w} as
\[
(\bar a \bar w_0)' + \frac{\bar b-c}{\bar a}(\bar a \bar w_0)=0,
\]
which yields, since $\bar w_0(\xim)=1$,
\[
\bar a \bar w_0(\xi)= \bar a(\xim) \exp\left(\int_{\xim}^\xi \frac{c-\bar b}{\bar a}\right).
\]
We recall that	
	\begin{align*}
	\bar a = \gamma  N^\gamma, \quad 
	\bar b = -2\gamma^2 N^{\gamma-1}\partial_x N - 2\gamma  N^\gamma \dfrac{\partial^2_x N}{\partial_x N},
	\end{align*}
hence
\begin{align*}
-\int_{\xim}^\xi \frac{\bar b(z)}{\bar a (z)} dz
& = 2 \int_{\xim}^\xi \left[\frac{( N^\gamma)'(z)}{ N^\gamma(z)} +  \frac{ N''(z)}{ N'(z)}\right] dz\\
& = 2 \ln \left(\frac{( N(\xi))^\gamma}{( N(\xim))^\gamma}\right) + 2 \ln \left(\frac{| N'(\xi)|}{| N'(\xim)|}\right),
\end{align*}
and therefore
\begin{align}
\bar w_0 (\xi)
& = \dfrac{\bar a(\xim)}{\bar a(\xi)} \exp\left(\int_{\xim}^\xi \dfrac{c}{\bar a}\right) \exp\left(-\int_{\xim}^\xi\dfrac{\bar b}{\bar a}\right) \\
& = \dfrac{1}{ (N(\xim))^{\gamma} ( N'(\xim))^2}  (N(\xi))^\gamma ( N'(\xi))^2\exp\left(\int_{\xim}^\xi \frac{c}{\bar a} dz \right).
\end{align}
Therefore we find the expression announced in Lemma \ref{lem:w0}, with a normalization constant
\[
K:=\dfrac{1}{ (N(\xim))^{\gamma} ( N'(\xim))^2}.
\]
Let us now estimate $K$. We recall that 
$\xim$ is defined in \eqref{def:xim-txg}
Since $N'=\gamma^{-1} P' P^{\frac{1}{\gamma}-1}$, it also follows that
\[
N'(\xim)= \frac{1}{\gamma} P'(\xim)\left(\frac{(c-1)(\gamma+1)}{c^3}\right)^{\frac{1}{2} - \frac{1}{2\gamma}},
\]
and thus
\[
K=\left(\frac{c^3}{(c-1)(\gamma+1)}\right)^{\frac{1}{2} - \frac{1}{\gamma}} \frac{\gamma^2}{P'(\xim)^2}.
\]
The sub- and super-solutions for $P$ (see Proposition \ref{prop:qualitative-Pg-Ng}) entail that $P'(\xim)$ is bounded from above and below. Hence $K$ is of order $\gamma^{3/2}$.

For $\xi \geq \tilde \xi$, we know by~\eqref{encadr-N-free} that there exist $0 < A_1 < A_2 < 1$ (close to $1-c^{-1}$) such that 
\begin{align}\label{eq:behav-N+}
   A_1 e^{-\frac{2}{c}\xi}& \leq  N(\xi) \leq A_2 e^{-\frac{\xi}{c}}.
\end{align}
Moreover, remember that $L_\gamma = \frac{N'}{N} + \frac{1}{c}$ converges uniformly to $0$ on $[\xi^*,+ \infty)$ as $\gamma \to + \infty$ (see the proof of Lemma~\ref{lem:quantitative-bounds}). 
Hence, for any $\eta > 0$, there exists $\gamma_0$ such that for all $\gamma > \gamma_0$, $N'/N \in [-1/c -\eta, -1/c + \eta]$ for all $\xi \in [\xi^*,+ \infty)$.
By~\eqref{eq:behav-N+}, we deduce that
\begin{equation}
\tilde A_1 e^{-\frac{2}{c}\xi} \leq | N'(\xi)| \leq \tilde A_2 e^{-\frac{\xi}{c}},
\end{equation}
with $\tilde A_{1,2} \in (0,1)$.
As a consequence $\bar w_0$ has a double exponential growth as $\xi \to +\infty$:
\begin{align}\label{eq:est-w-pinf}
& K_1 A_1^\gamma \tilde A_1^2 \exp\left(-2\frac{\gamma+2}{c}\xi\right) \exp\left(\dfrac{c^2}{\gamma^2}A_2^{-\gamma}\left(\exp\left(\frac{\gamma}{c}\xi \right)- \exp\left(\frac{\gamma}{c}\xim \right)\right)\right) \nonumber \\
& \quad \leq \bar w_0 \\
& \qquad \leq K_2 A_2^\gamma \tilde A_2^2 \exp\left(-\frac{\gamma+2}{c}\xi\right) \exp\left(\dfrac{c^2}{2\gamma^2}A_1^{-\gamma}\left(\exp\left(\frac{2\gamma}{c}\xi \right)- \exp\left(\frac{2\gamma}{c}\xim \right)\right)\right), \nonumber 
\end{align}
with $K_{1,2} \propto \gamma^{3/2}$. The estimate from the Lemma follows.\\
For $\xi \to -\infty$, we have using~\eqref{cvg-Ng-minf} and denoting $\tilde{\lambda} = \sqrt{1+\frac{c^2}{4\gamma^2}} -\frac{c}{2\gamma}$,
\begin{align*}
\bar w_0
& = K N^\gamma (N')^2 \exp\left(\int_{\xim}^\xi \dfrac{c}{\gamma N^\gamma}\right) 
\underset{\xi \to -\infty}{\sim}  K \tilde\lambda^2 P (1-N)^2 \exp\left(\int_{\xim}^\xi \dfrac{c}{\gamma P}\right) .
\end{align*}
Recalling estimates \eqref{encadr-P-xim}, we get
\[
\frac{C}{\sqrt{\gamma}}(\xi-\xim)\leq \int_{\xim}^\xi \frac{c}{\gamma P}\leq \frac{c}{\gamma}(\xi-\xim),
\]
and, for $|\xi|\leq C$ with $C$ independent of $\gamma$
\[
\frac{C^{-1}}{\gamma} \left(1-\frac{C'}{\sqrt{\gamma}}\right) \exp\left( \xi\right)\leq 1-N \leq \frac{C}{\gamma} \left(1-\frac{C'}{\sqrt{\gamma}}\right) \exp\left(\left(1-\frac{C}{\sqrt{\gamma}}\right) \xi\right).
\]
Gathering the terms, we obtain
\[
C^{-1}\frac{K}{\gamma^2}\exp\left(2\left(1+\frac{C}{\sqrt{\gamma}}\right) \xi\right) \leq \bar w_0 \leq C\frac{K}{\gamma^2}\exp\left(2\left(1-\frac{C}{\sqrt{\gamma}}\right) \xi\right), 
\]
and we deduce the result announced in Lemma~\ref{lem:w0}.

\end{proof}

\begin{proof}[Estimate of the constant $\delta_\gamma$ in $\bar w$]
Let 
\[
\bar \psi := \bar \phi' \bar a \bar w_0
= -\dfrac{\delta_\gamma}{\sqrt{\gamma}}\exp\left(\sqrt{\gamma}\xi\right). 
\]
Using the previous Lemma and recalling~\eqref{eq:behav-N+}, we observe that the double exponential growth of $\bar{w}_0$ dominates the growth in $\psi$ as $\xi \to +\infty$. 
On the other hand, for $\xi \leq -C$, we have
\[
\dfrac{\bar \psi(\xi)}{\bar{a}(\xi)\bar{w}_0(\xi)} 
= -\dfrac{\delta_\gamma}{\sqrt{\gamma}} \dfrac{\exp\left(\sqrt{\gamma}\xi \right)}{\gamma (\bar{N}(\xi))^\gamma\bar w_0(\xi)} 
\geq - C\dfrac{\delta_\gamma}{\gamma^3} \exp \left(\left(\sqrt{\gamma} - 2 \left(1 + \frac{C}{\sqrt{\gamma}}\right)\right)\xi \right) . 
\]
Hence, for $\gamma$ large enough $\sqrt{\gamma} - 3 > 0$ and $\dfrac{\bar \psi(\xi)}{\bar{a}(\xi)\bar{w}_0(\xi)} $ decreases exponentially to $0$ as $\xi \to -\infty$. We conclude then to the integrability of $\dfrac{\bar \psi}{\bar{a}\bar{w}_0}$ on $\R$.

Let us now study the behavior of $\phi$. 
For that purpose, we analyze separately the different regions according to the value of $\xi$.

\begin{itemize}
    \item for $\xi > \tilde\xi$: using the estimates~\eqref{eq:behav-N+}-\eqref{eq:est-w-pinf}, we infer that
    \begin{align*}
   | \phi'(\xi) |
    & = -\dfrac{\bar \psi(\xi)}{\bar{a}(\xi)\bar{w}_0(\xi)} \\
    & \leq C \frac{\delta_\gamma}{\gamma^{3/2}K} A_1^{-2\gamma} \exp\left(\left(\sqrt{\gamma}+  \frac{4(\gamma+1)}{c}\right)\xi\right) 
    \exp\left(-A_2^{-\gamma}\left(\exp\left(\frac{\gamma}{c}\xi\right) - \exp\left(\frac{\gamma}{c}\xim\right) \right) \right)\\
    & \leq C \frac{\delta_\gamma}{\gamma^{3/2}K} A_1^{-2\gamma} \exp\left( 5\frac{\gamma}{c}\xi\right) \exp\left(-A_2^{-\gamma}\left(\exp\left(\frac{\gamma}{c}\xi\right) - \exp\left(\frac{\gamma}{c}\xim\right) \right) \right).
    \end{align*}
    Hence, integrating and using the asymptotic values of $\xim$ and $\tilde \xi$ from Lemma \ref{lem:txg-xim}, we get
    \begin{eqnarray}
    \nonumber\int_{\tilde \xi}^\infty|\bar \phi'| &\leq& C\frac{\delta_\gamma}{\gamma^4} A_1^{-2\gamma} A_2^\gamma \exp\left(-A_2^{-\gamma} \left(\exp\left(\frac{\gamma}{c}\tilde \xi\right) - \exp\left(\frac{\gamma}{c}\xim\right) \right) \right)\\&\leq& C\frac{\delta_\gamma}{\gamma^4} A_1^{-2\gamma} A_2^\gamma \exp(-C A_2^{-\gamma}).\label{phi-1}
    \end{eqnarray}
    %
    \item for the intermediate region $\xi \in [\xi^-, \tilde \xi]$, we write
    \begin{align*}
    \int_{\xi^-}^{\tilde{\xi}}| \bar\phi'(z)| dz
    & = \dfrac{\delta_\gamma}{\sqrt{\gamma}} \int_{\xi^-}^{\tilde{\xi}} \dfrac{e^{\sqrt{\gamma}z}}{\gamma (N(z))^\gamma} 
    \times \dfrac{1}{K (N(z))^\gamma (N'(z))^2 \exp(\int_{\xi^-}^z \frac{c}{\gamma N^\gamma})} dz \\
    & = \dfrac{\delta_\gamma}{\gamma^{-1/2}K}
    \int_{\xi^-}^{\tilde{\xi}} \dfrac{e^{\sqrt{\gamma}z}}{ (P'(z))^2(N(z))^2} \exp\left(-\int_{\xi^-}^z \frac{c}{\gamma N^\gamma}\right)dz.
    \end{align*}
    Now, using Lemma~\ref{lem:txg-xim} and the definition of $\xi^-$, we have in this region
    \[
    |P'(\xi)| \geq \dfrac{C}{\gamma}, \quad
    N(\xi) \geq N(\tilde{\xi}) > 1 - \frac{2}{c}, \quad
    (N(\xi))^\gamma \leq P(\xi^-) = O\left(\dfrac{1}{\sqrt{\gamma}}\right),
    \]
    and thus 
    \begin{align} 
     \int_{\xi^-}^{\tilde{\xi}} |\phi'(z)| dz
    & \leq C \dfrac{\delta_\gamma}{\gamma^{-5/2}K}e^{\sqrt{\gamma}\tilde{\xi}} \int_{\xi^-}^{\tilde{\xi}}  \exp\left(-\dfrac{C}{\gamma^{1/2}}z\right) dz \nonumber \\
    & \leq C \gamma^{3/2} \delta_\gamma \left( \exp\left(-\frac{C\xim}{\sqrt{\gamma}}\right) -\exp\left(-\frac{C\tilde \xi}{\sqrt{\gamma}}\right)\right) \nonumber,\\
    &\leq  C \gamma^{1/2} \delta_\gamma,\label{phi-2}
    \end{align}
    where we have used the fact that $\tilde{\xi} = O(\gamma^{-1})$, $\xim = O(\gamma^{-\frac{1}{2}})$ 
    (cf Lemma~\ref{lem:txg-xim}).
    \item for $\xi < \xim$, we use the fact that
    \[
    \frac{1-P}{P'} \underset{\gamma \to +\infty}{\to} -1 \quad \text{uniformly on}~\R_- ,
	\]
	and the control~\eqref{eq:encadr_P_R-}
	\[
	1- P \geq  \left(1-\frac{1}{\gamma}\right) e^\xi \quad \forall \ \xi < 0,
	\]
	to infer that
	\begin{align}\label{phi-3}
		-\int_{-\infty}^{\xi^-} \phi'(z) dz
		& = \dfrac{\delta_\gamma}{\gamma^{-1/2}K}
    \int_{-\infty}^{\xi^-}  \dfrac{e^{\sqrt{\gamma}z}}{ (P'(z))^2(N(z))^2} \exp\left(\int^{\xi^-}_z \frac{c}{\gamma N^\gamma}\right)dz \nonumber \\
    & \leq C \dfrac{\delta_\gamma}{\gamma^{-1/2}K}
    \int_{-\infty}^{\xi^-}  \dfrac{e^{\sqrt{\gamma}z}}{ (1-P(z))^2} e^{\frac{c}{\sqrt{\gamma}} (\xi^--z)} dz \nonumber \\
    & \leq C \dfrac{\delta_\gamma}{\gamma^{-1/2}K}
    \int_{-\infty}^{\xi^-}  e^{\sqrt{\gamma}z} e^{-2z} e^{\frac{c}{\sqrt{\gamma}} (\xi^--z)} dz \nonumber \\
    & \leq C \dfrac{\delta_\gamma}{\gamma^{-1/2}K}
    \int_{-\infty}^{\xi^-}  e^{\frac{\sqrt{\gamma}}{2}z} dz \nonumber \\
    & \leq C \gamma^{-3/2} \delta_\gamma,
	 \end{align}
\end{itemize}
thanks to the fact that $\xi^- = O(\gamma^{-1/2})$ (cf Lemma~\ref{lem:txg-xim}).
Combining \eqref{phi-1}-\eqref{phi-2}-\eqref{phi-3}, there exists $\delta_0$ bounded away from $0$ such that for $\delta_\gamma = \delta_0 \gamma^{-1/2}$, $\left|\int_\R \phi'\right| \leq 1$.
\end{proof}

\subsection{Proof of the weighted Poincar\'e inequality}\label{proof-poincare}



\begin{proof}[Proof of Proposition \ref{prop:poincare}]
To lighten the notations, we forget in what follows the notation $\bar{\cdot}$ when it is clear that we work with functions of variable $\xi$.
Formally, we have the following inequalities, for any $\rho\in \mathcal C^2(\R)$ and $v\in \mathcal C^1_c(\R)$
\begin{align*}
0 
& \leq \int_{\R} \big(\partial_\xi (v\rho)\big)^2 d\xi \\
& = \int_{\R} \Big[\rho^2 (\partial_\xi v)^2 + v^2(\partial_\xi \rho)^2 d\xi + 2 \int_{\R}u \partial_\xi v \rho \partial_\xi \rho \Big] d\xi \\
& = \int_{\R} \Big[\rho^2 (\partial_\xi v)^2 + v^2(\partial_\xi \rho)^2\Big] d\xi - \int_{\R} v^2 \partial_\xi(\rho \partial_\xi \rho) d\xi \\
& = \int_{\R} \rho^2 (\partial_\xi v)^2 d\xi - \int_{\R} v^2 \rho \partial^2_\xi \rho \ d\xi.
\end{align*}
Note that when $\rho$ is positive and strictly convex, we obtain a Poincaré inequality.
We want to apply this inequality with $\rho:=(\bar a \bar w_0)^{1/2}$.
However, the weight $\rho$ is not convex on $\R$ and we cannot guarantee the sign of the second integral.
Let us compute the derivatives of $\rho$. Using \eqref{formula-w_0}, we have
\[
\rho= -\sqrt{\gamma K} N' N^\gamma \exp \left( \int_{\xim}^\xi \frac{c}{2 \gamma N(\xi)^\gamma}\right),
\]
and thus
\[
\rho'(\xi) 
= \sqrt{\gamma K} \exp\left(\int_{\xim}^\xi \frac{c}{2 \gamma N(\xi)^\gamma}\right) \left[ -(N'(\xi) N(\xi)^\gamma)' - \frac{c}{2\gamma}  N'(\xi) \right].
\]
We recall that
\[
-c N' - \gamma (N' N^\gamma)' = N (1-N^\gamma),
\]
so that
\[
\rho'(\xi) 
= \sqrt{\gamma K} \exp\left(\int_{\xim}^\xi \frac{c}{2 \gamma  N^\gamma(z)}\right) 
\left[ \frac{1}{\gamma} N(\xi) (1- N^\gamma(\xi)) + \frac{c}{2\gamma}  N'(\xi) \right].
\]
Differentiating once again and using the equation on $N$, we get
\begin{align*}
\rho''(\xi) 
& = \sqrt{\frac{K}{\gamma}} \exp\left(\int_{\xim}^\xi \frac{c}{2 \gamma N^\gamma(z)} dz\right)\left[ N' (1-(\gamma+1)N^\gamma) + \frac{c}{2}N'' + \frac{c}{2\gamma N^{\gamma-1}} (1-N^\gamma) + \frac{c^2}{4 \gamma}\frac{N'}{N^\gamma}\right]\\
&= \sqrt{\frac{K}{\gamma}}\exp\left(\int_{\xim}^\xi \frac{c}{2 \gamma N^\gamma(z)}d z \right)\left[ N' (1-(\gamma+1)N^\gamma) \right.\\
& \hspace{2cm}\left. - \frac{c}{2\gamma N^\gamma}\left(c N' + \gamma^2 ( N')^2 N^{\gamma-1} +  N (1- N^\gamma)\right)+ \frac{c}{2\gamma  N^{\gamma-1}} (1- N^\gamma) + \frac{c^2}{4 \gamma}\frac{ N'}{ N^\gamma}\right]	\\
&=-\sqrt{\frac{K}{\gamma}}N'\exp\left(\int_{\xim}^\xi \frac{c}{2 \gamma  N^\gamma(z)}dz \right)\left[  (\gamma+1) N^\gamma + \frac{c^2}{4 \gamma N^\gamma} + c \gamma \frac{ N'}{2 N}-1\right].
\end{align*}
Note that $ \rho'' \geq 0$ provided the term in brackets is non-negative. The bracketed term is a sum of four terms, among which the first two are positive, and the last two are negative. Furthermore,
\[
\frac{\gamma+1}{c} N^\gamma + \frac{c}{4 \gamma N^\gamma} = \left(\sqrt{\frac{\gamma+1}{c} N^\gamma} - \sqrt{\frac{c}{4 \gamma N^\gamma}}\right)^2 + \sqrt{\frac{\gamma+1}{\gamma}} \geq 1,
\]
so that
\begin{equation}\label{eq:d2rho1}
\rho'' \geq  -\sqrt{\frac{K}{\gamma}}N'\exp\left(\int_{\xim}^\xi \frac{c}{2 \gamma N^\gamma}\right)\left[(\gamma+1)\left( 1-\frac{1}{c}\right) N^\gamma + \frac{c(c-1)}{4 \gamma N^\gamma} + \frac{1}{4\gamma} +  c \gamma \frac{N'}{2N}\right].
\end{equation}
Thus the only zone where $\rho''$ is non-positive is the region where the last term in the above bracket is not dominated by others. Decomposing the domain in three zones, we have
\begin{itemize}
	\item $\xi \geq \tilde \xi$: 
	using the notations of Section~\ref{sec:TW-descript}, we have
	\[
	N'(\xi) \geq \tilde Q (N) = - \dfrac{c-1}{4\gamma^2 (N(\xi))^{\gamma-1}},
	\]
	so that
	\[
	c \gamma \frac{N'}{2N} < \frac{c(c-1)}{8 \gamma N^\gamma} \quad \forall \ \xi \geq \tilde \xi.
	\]
	Recalling the expressions of $\rho$ and $\bar w_0$, we infer that for $\xi > \tilde \xi$
	\begin{align*}
	\rho \rho''
	& \geq C \sqrt{\gamma K} \sqrt{\frac{K}{\gamma}} (N')^2 N^\gamma \exp\left(\int_{\xim}^\xi \frac{c}{ \gamma  N^\gamma(z)}dz \right) \dfrac{C}{\gamma N^\gamma}\\
	& \geq \dfrac{C}{\gamma N^\gamma} w_0.
	\end{align*}
	
	\item for  $\xi \leq \xim$, we have $P \geq P(\xim) = \left(\frac{c^3}{(c-1)(\gamma+1)}\right)^{1/2}$ while $P' \in [-c,0]$. Hence, we ensure that
	\[
	-cP' \leq c^2\leq  (\gamma+1)\left(1-\frac{1}{c}\right) P^2 \qquad \forall \ \xi \leq \xim,
	\]  
	and therefore
	\[
	- c \gamma \frac{N'}{2N} \leq  \dfrac{(\gamma+1)}{2} \left( 1-\frac{1}{c}\right) N^\gamma \qquad \forall \ \xi \leq \xim.
	\]
	Let us mention that this inequality is precisely the property that lead us to the normalization \eqref{def:xim-txg}.
		We deduce then
	\begin{align*}
	\rho\rho''
	& \geq  K (N')^2 N^{2\gamma} \exp\left(\int_{\xim}^\xi \frac{c}{ \gamma  N^\gamma(z)}dz \right)  \dfrac{(\gamma+1)}{2} \left( 1-\frac{1}{c}\right)\\
	& \geq C \gamma N^\gamma w_0.
	\end{align*}
	
	\item for intermediate region, i.e., $\xi \in [\xim, \tilde \xi]$, we can always bound the negative contribution as follows
	\[
	\rho (\rho'')_-
	\leq\frac{c\gamma}{2} K |N'|^3 \exp\left(\int_{\xim}^\xi \frac{c}{ \gamma  N^\gamma(z)}dz \right) N^{\gamma-1} 
		\leq C_\gamma e^{\sqrt{\gamma}\xi},
	\]
	for $\xi \in (\xim, \tilde \xi)$, where
	\[
	C_\gamma \leq C \gamma^{5/2} \|N'\|_{L^\infty(\xim,\tilde \xi)}^3 \exp\left(\int_{\xi^-}^{\tilde \xi} \frac{1}{\gamma N^\gamma}\right).
	\]
	Let us now evaluate the integral in the argument of the exponential.
	Using Lemma \ref{lem:txg-xim}, we  recall that $|P'|=\gamma |N'| N^{\gamma-1}\geq C\gamma^{-1}$ on $(\xi^-, \tilde \xi)$.
Hence
	\begin{eqnarray*}
	\int_{\xi^-}^{\tilde \xi} \frac{1}{\gamma N^\gamma}&=&\int_{\xi^-}^{\tilde \xi} \frac{|N'|}{\gamma |N'| N^\gamma}\\
	&\leq & C\gamma \int_{\xi^-}^{\tilde \xi} \frac{|N'|}{N}\\
	&\leq & C \gamma \ln \left(\frac{N(\xi^-)}{N(\tilde \xi)}\right) \leq C \gamma.
	\end{eqnarray*}
	Thus $C_\gamma \leq C^\gamma$ for some constant $C>1$ independent of $\gamma$.
	
\end{itemize} 	
Gathering all the terms, we obtain
\[
\int_{\xi\leq \xim} v^2 \rho \rho'' + \int_{\xi\geq \tilde \xi} v^2 \rho \rho'' \leq \int_\R (\p_\xi v)^2 \bar a \bar w_0 + \int_{\xim}^{\tilde \xi} v^2 \rho (\rho'')_-.
\]
Replacing $\rho\rho''$ by their lower bounds on $(-\infty, 0)$ and on $(\tilde \xi, + \infty)$, we obtain the inequality announced in the Proposition.

\end{proof}

\section*{Acknowledgements}
This project has received funding from the European Research Council (ERC) under the European Union's Horizon 2020 research and innovation program Grant agreement No 637653, project BLOC ``Mathematical Study of Boundary Layers in Oceanic Motion''. This work was supported by the SingFlows and CRISIS projects, grants ANR-18-CE40-0027 and ANR-20-CE40-0020-01 of the French National Research Agency (ANR).
A.-L. D. acknowledges the support of the Institut Universitaire de France.
This material is based upon work supported by the National Science Foundation under Grant No. DMS-1928930 while the authors participated in a program hosted by the Mathematical Sciences Research Institute in Berkeley, California, during the Spring 2021 semester.

The authors would like to thank Francois Hamel for pointing out reference~\cite{gilding2005fisher}.

\bibliography{ref_hs}
\end{document}